\newtheorem{thm}{Theorem}[section]
\newtheorem{prop}[thm]{Proposition}
\newtheorem{lemma}[thm]{Lemma}
\newtheorem{coro}[thm]{Corollary}
\theoremstyle{definition}
\newtheorem{dfn}[thm]{Definition}
\newtheorem{constr}[thm]{Construction}
\newtheorem{example}[thm]{Example}
\theoremstyle{remark}
\newtheorem{rmk}[thm]{Remark}
\renewcommand{\sf}[1]{\mathsf{#1}} 
\newcommand{\base}{M_{\R}} 
\newcommand{\bbase}{\overline{M}_{\R}} 
\newcommand{\Etrop}{\trop{E}} 
\newcommand{\wall}{\mathrm{Wall}} 
\title[Non-archimedean cylinder counts for blowups of toric surfaces]{Computing non-archimedean cylinder counts in blowups of toric surfaces}
\author{Thorgal HINAULT}
\date{}
\begin{document}

\begin{abstract}
Counts of holomorphic disks are at the heart of the SYZ approach to mirror symmetry. In the non-archimedean framework, these counts are expressed as counts of analytic cylinders.
In simple cases, such as cluster varieties, these counts can be extracted from a combinatorial algorithm encoded by a scattering diagram. The relevant scattering diagram encodes information about infinitesimal analytic cylinders.
In this paper, we give a formula relating counts of a restricted class of cylinders on affine log Calabi-Yau surfaces to counts on a single blowup of a toric surface, using non-archimedean Gromov--Witten theory and a degeneration procedure parametrized by tropical data. This closed-form formula constrasts with the scattering \linebreak diagram approach.
\end{abstract}

\maketitle

\tableofcontents

\section{Introduction}

Some algebro-geometric implementations of the SYZ picture of mirror symmetry consist in constructing explicitly a mirror algebra reflecting the enumerative \linebreak geometry of the initial variety. The way this enumerative data enters in the definition of the mirror algebra is to be thought of as the instanton correction, in a broad sense.

To this day, in the log Calabi-Yau case we have at our disposal essentially two constructions: one using punctured log-Gromov-Witten invariants \cites{abramovich_punctured_2020}{arguz_higher_2020}{gross_intrinsic_2021}{gross_canonical_2021}, and the other relying on non-archimedean enumerative geometry \cites{keelFrobeniusStructureTheorem2019a}{porta_non-archimedean_2022}{porta_non-archimedean_2022-1}{yu_enumeration_2016}{yu_enumeration_2020}{yu_gromov_2015}{yu_tropicalization_2015}. These constructions are known to be equivalent in restricted cases \cite{johnston_comparison_2022}, although a general comparison result between punctured Gromov-Witten theory and non-archimedean Gromov-Witten theory has not been achieved yet.
In both of these constructions, instanton corrections enter as structure constants of the mirror algebra. Structure constants can be expressed in terms of counts of analytic cylinders in the initial variety \cites{gross_canonical_cluster_2017}{keelFrobeniusStructureTheorem2019a}. These analytic cylinders in the non-archimedean setting correspond to the broken lines of \cite{gross_canonical_cluster_2017}.
\\

In this paper, we are interested in computing the non-archimedean cylinder counts for a log Calabi-Yau surface $(\ca{Y},\ca{D})$. We assume that $(\ca{Y},\ca{D})$ is the blow up of a toric surface: this is not a restrictive assumption, as every log Calabi-Yau surface admits a toric model \cite[][Proposition 1.3]{gross_mirror_2015}. The idea is then to relate counts in the blown up variety to counts after we blow down a $(-1)$-curve in the exceptional locus of the toric model. We do it using a deformation procedure parametrized by tropical data, and use analytic geometry to cut out appropriate connected components in the moduli space of non-archimedean stable maps.
Deformation invariant counts are then defined using the powerful formalism of virtual\linebreak fundamental class applied to derived analytic stacks of stable maps \cite{porta_non-archimedean_2022-1}. The upshot is that the geometry of the derived moduli spaces reflects perfectly the axioms of Gromov-Witten theory, so we can handle degenerations of the domain in an easy way. We relate these virtual counts to the naive counts of the mirror construction \cite{keelFrobeniusStructureTheorem2019a} using a smoothness argument, and obtain in this way a formula involving only cylinder counts.

The advantage of this approach is that it leads to explicit, closed-form formula for the counts defining structure constants of the mirror. To the best of our knowledge, this is a new result which contrasts with the scattering diagram\linebreak approach \cite{gross_tropical_2010}, that gives an algorithmic way to compute the structure constants to a finite order. Closed form formulas are of interest as they allow to compute the mirror algebra, give nontrivial relations between various invariants, and enter in the coefficients of wall-crossing functions whose expressions are in most cases conjectural \cite{gross_quivers_2009}. \\

To state the main results of the paper, let $(\ca{Y},\ca{D})$ be a log Calabi-Yau surface with a toric model $\pi\colon (\ca{Y},\ca{D})\rightarrow (\ca{Y}_t,\ca{D}_t)$.
The map $\pi$ is a blowup of non-torus fixed points $x_{ij}$ in the toric boundary, where the $i$ index refers to the toric boundary component, and the $j$ index enumerates the points in the $i$ component.
We denote by $\ca{E}_{ij}$ the irreducible divisor above $p_{ij}$, by $\ca{E}_i$ the union of the exceptional divisors lying above a fixed toric boundary divisor, and by $\ca{E}$ the full exceptional locus.

The formulas involve counts of \emph{primitive (infinitesimal) cylinders} (\S\ref{subsec:tropical-cylinders}). A cylinder is a tropical curve that parametrizes non-archimedean stable maps that (i) meet only two prescribed components of the boundary $\ca{D}$ at a single unspecified point with a given multiplicity, and (ii) have prescribed intersection numbers with each $\ca{E}_i$. We work with primitive cylinders, by which we mean cylinders parametrizing stable maps that have intersection number at most $1$ with each $\ca{E}_i$. To a cylinder $V$ and a curve class $\beta$, we associate a cylinder count $N(V,\beta )$. Our cylinder counts refine the \emph{cylinder spine counts} defined in \cite{keelFrobeniusStructureTheorem2019a}, in the sense that summing our counts over all possible prescriptions in condition (ii) above gives the cylinder spine count.
Given a primitive cylinder $V$, we call the part of the tropical curve parametrizing the intersections with the exceptional divisor the \emph{twig}, and encode the intersection numbers in a tuple of weight vectors called the \emph{twig type}. The length of this tuple is the number of irreducible components of $\ca{E}$ that the associated stable maps meet. In particular twig types of length $1$, as those of the cylinders that appear on the right-hand side of our formula, parametrize stable maps meeting $\ca{E}$ at a single point.

\begin{thm}[{Theorem \ref{theo:reduction-primitive-twig}}]
Let $V$ be a primitive infinitesimal tropical cylinder with twig type $\w = (\w_s)_{1\leq s \leq t}$, let $\beta\in\NE (Y)$. Then
\[N(V, \beta ) = \sum_{\beta_1+\cdots +\beta_t = \beta} \prod_{s=1}^t N(V_s, \beta_s) ,\]
where $V_s$ is an infinitesimal cylinder of twig type $\w_s$ (see Construction \ref{constr:elementary-cylinders}).
\end{thm}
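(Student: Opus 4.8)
The plan is to realize the right-hand side as the outcome of a degeneration of the source curve: a tropical cylinder whose twig type has length $t$ forces the non-archimedean stable maps it parametrizes to have a source that breaks into $t$ pieces, each parametrized by an elementary cylinder, and the virtual count is multiplicative along this decomposition. It is convenient to organize this as an iterated splitting (equivalently, by induction on $t$, the case $t=1$ being tautological).

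First I would recall, from the definition of the cylinder count, that $N(V,\beta)$ is the degree of the virtual fundamental class of the open-and-closed analytic substack $\mathcal{M}(V,\beta)$ of the derived moduli stack of non-archimedean stable maps to $(\ca{Y},\ca{D})$ of class $\beta$ satisfying the tangency conditions along $\ca{D}$ and the intersection conditions along the $\ca{E}_i$ encoded by $V$, cut out by requiring the tropicalization of the source (its skeleton together with the induced map to $\base$) to be of combinatorial type $V$. Then I would make precise the combinatorial decomposition of $V$: the twig of a primitive cylinder with twig type $(\w_s)_{1\le s\le t}$ is a tree attached to the spine along $t-1$ interior vertices, and cutting $V$ at these vertices exhibits it as the concatenation of $t$ elementary cylinders $V_1,\dots,V_t$, where $V_s$ carries the single weight vector $\w_s$ in the sense of Construction~\ref{constr:elementary-cylinders}; the balancing condition at each cut is recorded by a primitive direction in $\base$ which pins down the boundary (or exceptional) stratum through which the corresponding node of the source must map.

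Next I would run the splitting (degeneration) formula of non-archimedean Gromov--Witten theory in the derived-analytic formalism. Since the skeleton of any curve in $\mathcal{M}(V,\beta)$ maps to $V$ and $V$ has $t-1$ cut vertices, such a curve is forced to have a source with $t-1$ separating nodes lying over those vertices; conversely the clutching morphism produces all such configurations. Using the compatibility of the virtual fundamental class with this kind of source degeneration, I would identify $\mathcal{M}(V,\beta)$, together with its virtual class, with the iterated fibre product $\mathcal{M}(V_1,\beta_1)\times\cdots\times\mathcal{M}(V_t,\beta_t)$ over the evaluation maps at the gluing nodes into the relevant strata of $\ca{Y}$, the sum being over $\beta_1+\cdots+\beta_t=\beta$. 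Here the primitivity hypothesis enters decisively: because each $\ca{E}_i$ is met with multiplicity at most $1$, every twig branch carries a primitive weight vector, the gluing strata are met transversally with multiplicity one, and no orbifold or automorphism factors appear, so the virtual class of the fibre product is literally the product of the factors. Taking degrees yields the stated formula.

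The step I expect to be the main obstacle is this identification of $\mathcal{M}(V,\beta)$ with the iterated fibre product \emph{on the nose}: one must show that the tropical type $V$ selects exactly the broken configurations — no irreducible, or insufficiently degenerate, source with skeleton mapping to $V$ survives — and that the clutching morphism is an equivalence matching virtual classes. This is precisely where the analytic-geometric input used throughout the paper, cutting out the relevant connected components of the moduli space, is needed, together with a careful verification that the contact multiplicities at the gluing nodes — governed by the tropical weights along the cut rays and controlled by primitivity — are all equal to one. A secondary point to check is that the decomposition of $\beta$ induced by the nodes ranges over all tuples $(\beta_1,\dots,\beta_t)$ with $\beta_s\in\NE(Y)$ and $\sum_s\beta_s=\beta$, with no further constraint.
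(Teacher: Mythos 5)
Your proposal rests on a mistaken reading of what the tropical type $V$ imposes on the domain curve. You claim that a tropical cylinder with twig type of length $t$ ``forces the non-archimedean stable maps it parametrizes to have a source that breaks into $t$ pieces'', i.e.\ that the domain already has $t-1$ separating nodes lying over $t-1$ interior vertices of $V$. This is not the case. By construction (\S\ref{subsec:initial-data}), the twig of a primitive cylinder of type $(\w_s)_{1\leq s\leq t}$ is a tree with a \emph{single} $(t+1)$-valent vertex mapped to the origin, and the whole twig is attached to the spine at a single point; the generic stable map over $V$ has \emph{irreducible} domain with this tropical skeleton. There are no nodes at which to cut, so there is nothing for a clutching or splitting formula to act on directly. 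The iterated fibre product you propose to identify with $\mathcal{M}(V,\beta)$ ``on the nose'' simply does not arise from the moduli space over $V$ as it stands.

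What the paper does instead, and what your proposal is missing, is a \emph{deformation} that forces the degeneration before any splitting argument can be applied. For each $1\leq k\leq t$, it constructs a one-real-parameter path $\Delta$ of domain tropical curves (Figure~\ref{fig:trop-path}) and a family $\sf{T}_k^g$ of tropical curves over it, shown to cut out connected components (Proposition~\ref{prop:connected-components}). This is pulled back to an analytic family $M(T_k^g,\beta)_W$ on which the map $\Psi_k^g$ is proper and flat (Proposition~\ref{prop:properness-restriction}, Corollary~\ref{coro:proper-and-flat}), so that the count $N(T_k^g,\beta)$ is deformation invariant along $\Delta$. At the two endpoints of $\Delta$ the domain curve has degenerated into a nodal curve with two pieces, and it is \emph{only there} that a cut-edge/forget-tail splitting of the virtual class is available (Proposition~\ref{prop:splitting-formula}, using Lemmas~\ref{lemma:restriction-fundamental-class}--\ref{lemma:splitting-fundamental-class}). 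One endpoint gives $N(L_k,\beta-\widehat\delta_k)$ after using the toric evaluation $N(M_k,\beta)\in\{0,1\}$ (Lemma~\ref{lemma:compute-M}); the other gives $\sum N(L_{k+1},\beta_1)N(N_k,\beta_2)$. Iterating $k=1,\dots,t$ and applying Proposition~\ref{prop:compute-L} for the initial and final values of $k$ yields the theorem. Your secondary concerns (primitivity ensuring multiplicity-one contacts, and the range of the $\beta_s$) are real, but they are downstream of the missing deformation step; as written, the first step of your argument already fails because the nodal degeneration you want to split along is not present in $\mathcal{M}(V,\beta)$ and must be produced by a deformation-invariance argument.
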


For each component of the toric boundary, let $\ell_i$ denote the number of irreducible components of $\ca{E}_i$. An easy inspection of the curve classes leads to the following refined result:

\begin{coro}[{Corollary \ref{coro:reduction-elementary-cylinder}}]
  Let $V$ be a primitive infinitesimal cylinder of twig type $\w = (\w_s )_{1\leq s\leq t}$. For each $s$, let $\Etrop_{i(s)}$ be the direction of the corresponding twig.

  Then there are at most $\prod_{1\leq s\leq t} \ell_{i(s)}$ curve classes such that $N(V,\beta )\neq 0$. Such a curve class $\beta$ is determined by the choice of an irreducible component $E_{i(s) j}$ for all $s$, and we then have
  \[N(V,\beta ) = \prod_{s=1}^t N(V_s ,\beta_s ),\]
  where $\beta_s$ is the curve class whose intersection number with each irreducible\linebreak component of $E$ is $0$ except for $E_{i(s) j}$.
\end{coro}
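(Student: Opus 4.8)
The plan is to combine Theorem~\ref{theo:reduction-primitive-twig} with a short computation on curve classes. By that theorem,
\[ N(V,\beta) \;=\; \sum_{\beta_1 + \cdots + \beta_t = \beta}\ \prod_{s=1}^{t} N(V_s,\beta_s), \]
where $V_s$ is the elementary cylinder of twig type $\w_s$ from Construction~\ref{constr:elementary-cylinders}. Since $\w_s$ has length one, a non-archimedean stable map parametrized by $V_s$ meets the exceptional locus $E$ at a single point, which lies on exactly one irreducible component of $E_{i(s)}$, and there with multiplicity $1$ because $V_s$ is primitive. Hence, as soon as $N(V_s,\beta_s)\neq 0$, the intersection numbers of $\beta_s$ with the exceptional divisors are completely prescribed: $\beta_s\cdot E_{i(s)j} = 1$ for a single index $j\in\{1,\dots,\ell_{i(s)}\}$, and $\beta_s\cdot E_{kl}=0$ for every other exceptional component.

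Next I would observe that, once these exceptional intersection numbers are fixed, $\beta_s$ is itself uniquely determined. Indeed, the cylinder datum $V_s$ already records the intersection numbers of $\beta_s$ with every component of $D$ (two of them carry the prescribed multiplicities, the rest are $0$), so it is enough to know that a curve class on the blowup $\pi\colon (Y,D)\to (Y_t,D_t)$ of a toric surface is determined by its intersection numbers against the components $D_i$ of $D$ and against the exceptional divisors $E_{ij}$. This is the ``inspection of curve classes'': $\mathrm{Pic}(Y)_{\mathbb{Q}} = \pi^{*}\mathrm{Pic}(Y_t)_{\mathbb{Q}}\oplus\bigoplus_{ij}\mathbb{Q}[E_{ij}]$; pairing against the $E_{ij}$ recovers the exceptional coordinates, while the toric boundary divisors span $\mathrm{Pic}(Y_t)_{\mathbb{Q}}$ and the intersection pairing on the toric surface $Y_t$ is nondegenerate, so pairing $\beta$ against the pullbacks of the components of $D_t$ --- equivalently, against the $D_i$ together with the $E_{ij}$ --- recovers the remaining part of the class. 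Consequently, for each choice of an irreducible component $E_{i(s)j}$ for all $s$ there is a unique candidate class $\beta_s^{(j)}$ with $N(V_s,\beta_s^{(j)})\neq 0$, namely the one described in the statement.

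It remains to put the pieces together. By primitivity of $V$ the twig directions $\Etrop_{i(s)}$ are pairwise distinct --- two twigs in the same direction would force intersection number at least $2$ with the corresponding $E_i$ --- so the exceptional coordinates of a sum $\beta = \sum_s \beta_s$ with every $N(V_s,\beta_s)\neq 0$ are the juxtaposition of those of the individual $\beta_s$. Given $\beta$ with $N(V,\beta)\neq 0$, comparing exceptional intersection numbers therefore singles out, for each $s$, the unique component $E_{i(s)j(s)}$ with $\beta\cdot E_{i(s)j(s)} = 1$, and forces $\beta_s = \beta_s^{(j(s))}$ in every decomposition contributing to the sum; using that Construction~\ref{constr:elementary-cylinders} distributes the spine and tangency data of $V$ additively among the $V_s$, one checks that $\beta = \sum_s\beta_s^{(j(s))}$ is indeed the resulting decomposition. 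Thus exactly one term of the sum survives, so $N(V,\beta) = \prod_s N(V_s,\beta_s^{(j(s))})$; letting the choices range over the $\prod_s\ell_{i(s)}$ tuples of components exhausts the classes with $N(V,\beta)\neq 0$ and yields the stated bound.

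The step I expect to require the most care is this last compatibility: checking that the decomposition of the spine and tangency data of $V$ into the elementary pieces $V_s$ of Construction~\ref{constr:elementary-cylinders} is genuinely additive on curve classes, so that the forced decomposition $\beta = \sum_s\beta_s^{(j(s))}$ really closes up. Everything enumerative here is soft: that $N(V_s,\beta_s)\neq 0$ constrains only the exceptional intersection numbers, and that distinct twigs point in distinct directions, both follow directly from the definitions of \S\ref{subsec:tropical-cylinders}.
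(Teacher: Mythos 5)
Your argument is correct and follows the paper's route: combine Theorem~\ref{theo:reduction-primitive-twig} with the observation (the unnumbered lemma preceding the corollary in the paper) that for an elementary cylinder $V_s$ the exceptional intersection numbers are pinned to a single $E_{i(s)j}$ with multiplicity $1$, and that the intersection numbers against the $D_i$ and $E_{ij}$ determine a class in $N_1(Y)\simeq N_1(Y_t)\oplus\Z^E$, so each $\beta_s$ with $N(V_s,\beta_s)\neq 0$ is one of at most $\ell_{i(s)}$ classes. One small remark: the step you flag as requiring the most care --- showing that ``$\beta=\sum_s\beta_s^{(j(s))}$ closes up'' by checking that Construction~\ref{constr:elementary-cylinders} distributes spine and tangency data additively --- is not actually needed. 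If $N(V,\beta)\neq 0$ then some term of the sum in Theorem~\ref{theo:reduction-primitive-twig} is nonzero, which by definition is a decomposition $\beta=\sum_s\beta_s$ with all $N(V_s,\beta_s)\neq 0$; by primitivity the $\Etrop_{i(s)}$ are distinct, so the exceptional coordinates of $\beta$ single out each $\beta_s$ uniquely, and the equality $\beta=\sum_s\beta_s^{(j(s))}$ is then forced by this very term rather than by any additivity property of the construction. The rest of your proof is sound.
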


This result is a first step towards expressing general cylinder counts (meeting multiple irreducible components of the exceptional divisor) in terms of counts of cylinders touching only one irreducible component of the exceptional divisor. In other words, it expresses primitive cylinder counts of an arbitrary log Calabi-Yau surface in terms of cylinder counts on a toric surface blown up at one point of the toric boundary.

Cylinder counts really depend on the interior of the log Calabi-Yau $(\ca{Y},\ca{D})$. In the case of a single blowup  the interior is $\G_m^2 \cup (\ca{E}\setminus \ca{D})$, where $\ca{E}$ is a $(-1)$-curve and $\ca{D}$ is an snc anticanonical divisor, strict transform of the toric boundary upon taking a toric model. In practice, we can choose any snc compactification of the interior arising from a toric model to compute these counts.
\\

The methods of this paper only work for primitive cylinders, that parametrize stable maps that have simple intersections with the exceptional divisor.
This is because we cannot control the virtual contributions induced by higher intersection numbers solely by tropical means. Concretely, for higher multiplicities the domain curves can have ``bubbles'' mapped to the exceptional divisor, and these are not seen by the tropical picture. Because of this phenomenon, our argument to select connected components in the moduli spaces, which is key to proving deformation invariance of the counts, does not hold anymore.
\\

The paper is organized as follows: in Section \ref{sec:notations} we introduce the geometric set-up, and review non-archimedean Gromov-Witten theory. In Section \ref{sec:trop-geometry} we set up conventions for tropical curves, and define cylinder counts. Section \ref{sec-count-primitive-cyl} is the main body of this work: first we define a tropical deformation, then lift it to a deformation of analytic stable maps, and finally we look at the degeneration of this deformation. We apply the deformation procedure inductively, reducing the number of blowups by one at each step.


\section{Notations and conventions} \label{sec:notations}

Let $(\ca{Y} , \ca{D})$ be a log Calabi-Yau surface over a non-archimedean field $k$ of characteristic $0$. In this section, we fix a toric model and define the associated tropicalization map. After that, we define the relevant moduli spaces of non-archimedean analytic stable maps. Even though we make use of the powerful derived formalism in Section \ref{sec-count-primitive-cyl}, the important Lemma \ref{lemma:underived} allows to identify the virtual counts with the naive counts defined in \cite{keelFrobeniusStructureTheorem2019a}.

\subsection{Geometric setup.}

\subsubsection{Blow-up of toric surfaces.}
Up to applying a toric blowup, we assume that $(\ca{Y} ,\ca{D})$ admits a toric model \cite[][Proposition 1.3]{gross_mirror_2015}. In the counting of stable maps we consider later, applying a toric blowup does not change the counts.

By a toric model, we mean that $(\ca{Y} ,\ca{D})$ is obtained as a sequence of non-toric blow-ups of toric boundary points of a complete smooth toric surface $\ca{Y}_t$.
We choose a cyclic ordering of the irreducible components of the toric boundary ${\ca{D}_t = \sum_{i\in I_{\ca{D}_t}} \ca{D}_{t,i}}$, and denote by $\pi\colon (\ca{Y},\ca{D} )\rightarrow (\ca{Y}_t,\ca{D}_t)$ the toric model. We denote by $T_M = \ca{Y}_t\setminus \ca{D}_t$ the big torus with cocharacter lattice $M$.

More precisely, $(\ca{Y},\ca{D})$ is obtained from $(\ca{Y}_t,\ca{D}_t)$ as follows: fix a tuple $(\ell_1, \dots ,\ell_m)$ of integers, and over each divisor $\ca{D}_{t,i}$ choose $\ell_i$ distinct points $x_{ij}$, not lying in any $0$-strata of $\ca{D}_t$.
Let $\pi\colon \ca{Y} {\rightarrow} \ca{Y}_t$ be the blowup at these points.
We denote by $\ca{D}_i$ the strict transform of $\ca{D}_{i,t}$, and by $\ca{E}_{ij}$ the exceptional divisor lying above $x_{ij}$. We also set $\ca{D} = \sum_{i} \ca{D}_i$ and $\ca{E} = \sum_{i,j} \ca{E}_{ij}$.
We thus have the relation
\[c_1 (T_{\ca{Y}}) = \pi^{\ast} c_1 (T_{\ca{Y}_t}) - \ca{E} = \ca{D}-\ca{E}.\]
We denote by $\Sigma$ the fan associated to $\ca{Y}_t$.

\subsubsection{Tropicalization through toric model.}
From now on, we work with Berkovich analytifications which we denote with straight letters: ${Y=  \an{\ca{Y}}}$, ${D=\an{\ca{D}}}$, \linebreak${Y_t = \an{\ca{Y}_t}}$, ${D_t =\an{\ca{D}_t}}$, and so on.

Since $U = Y\setminus D$ is log Calabi-Yau, we can consider its essential skeleton $\Sk (U)$, which comes with an integral affine structure. It is constructed from the log Calabi-Yau volume form on $U$ using Temkin's Kähler seminorm. Similarly, we consider the skeleton $\Sk (T_M)$ with its canonical integral affine structure. There is an isomorphism $\Sk (T_M )\simeq \base = M\otimes \R$, compatible with integral affine structures meaning that $\Sk (T_M , \Z ) \simeq M$. Up to applying this isomorphism, we assume $\Sk (T_M )  =\base$.

\begin{rmk} \label{rem:norm-in-M}
The skeleton $\Sk (U)$ is naturally included inside the Clemens polytope of $(Y,D)$, inducing an embedding $\Sk (U)\hookrightarrow \R_{\geq 0}^{m}$ where $m$ is the number of components of $D$. Integral points of the skeleton have coordinates in $\Z_{\geq 0}^m$ under this inclusion, and using the identification $M\simeq \Sk (U,\Z )$ the \emph{norm} of a vector $v\in M$ is defined as the sum of the absolute value of its coordinates under the embedding $M\hookrightarrow \Z^m$.
\end{rmk}

We denote by $\bbase$ the natural compactification of the essential skeleton induced by the $\G_m^2\hookrightarrow Y_t$ (\emph{cf.} Figure \ref{fig:skeleton} for $\G_m^2\hookrightarrow \mathbb{P}^2$). We consider the tropicalization of $U$ through the toric model $\pi$, and extend it to the compactification $Y$
\[\tau\colon Y\overset{\pi}{\longrightarrow} Y_t\overset{\tau_t}{\longrightarrow} \bbase .\]
We refer to $\tau$ as the tropicalization map.

\begin{figure}
  \begin{tikzpicture}

\begin{scope}[scale=1]

\draw (2,0) -- (0,0) -- (0,2) ;
\draw (0,0) -- (-1.4,-1.4) ;

\end{scope}

\begin{scope}[scale=1, shift={(8,0)}]

\draw (2,0) -- (0,0) -- (0,2) ;
\draw (0,0) -- (-1.4,-1.4) ;


\draw (2,2) -- (2,-4.8) -- (-4.8,2) -- (2,2) ;

\end{scope}

\end{tikzpicture}
  \caption{The fan of $\mathbb{P}^2$ drawed in $\Sk (\G_m^2 )$, and the natural compactification $\overline{\Sk (\G_m^2)}$.}
  \label{fig:skeleton}
\end{figure}
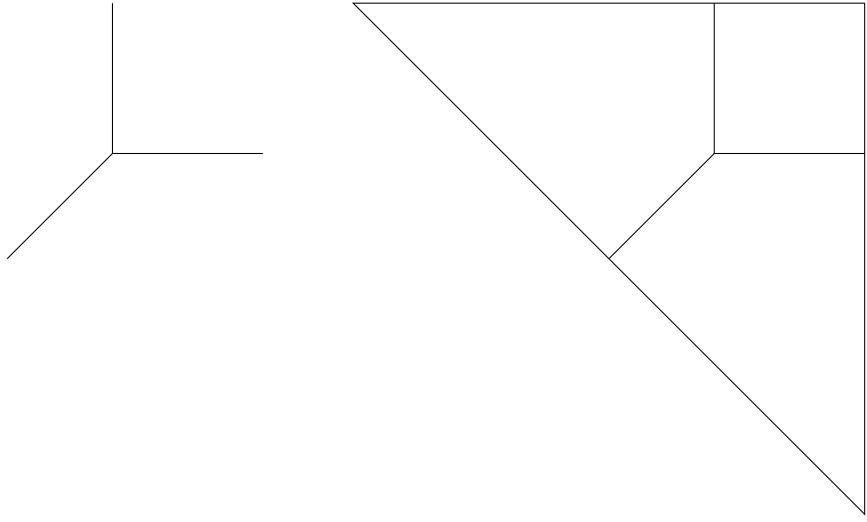

\subsection{Non-archimedean stable maps}
In this paper, we work with \linebreak non-archimedean stable maps, and non-archimedean Gromov-Witten invariants. We can always recover algebraic statements using GAGA theorems for\linebreak non-archimedean analytic stacks, given that $Y$ is proper \cite{porta_higher_2016}.

We refer to \cites{porta_non-archimedean_2022}{porta_non-archimedean_2022-1} for the general theory of stable maps in non-archimedean geometry. Below we recall the main definitions and results.

\subsubsection{Derived non-archimedean stable maps.}

\begin{dfn}
  Let $S$ be a rigid $k$-analytic space, let $X\rightarrow S$ be a smooth rigid $k$-analytic space over $S$. We denote by $\overline{M} (X/S,\tau , \beta )$ the moduli space of $(\tau ,\beta )$-stable maps.

  In the special case when $\tau$ is an $n$-valent vertex and $\beta$ has genus $0$, we denote this moduli space by $\overline{M}_{0,n} (X/S ,\beta )$.
\end{dfn}

\begin{thm}[{\cite[][Theorems 1.1, 1.2]{porta_non-archimedean_2022-1}}]
  Let $S$ be a rigid $k$-analytic space and let $X\rightarrow S$ be a smooth rigid $k$-analytic space over $S$. Let $(\tau ,\beta )$ be an $A$-graph. Then:
   \begin{enumerate}
     \item The moduli stack $\overline{M} (X,\tau ,\beta )$ of $(\tau ,\beta )$-stable maps admits a derived\linebreak enhancement $\R\overline{M} (X/S , \tau ,\beta )$ that is a derived $k$-analytic stack, locally of finite presentation and derived lci over $S$.
     \item If $S$ is an algebraic variety and $X$ is an algebraic variety over $S$, then \[\an{\R\overline{M} (X/S , \tau ,\beta) }\overset{\sim}{\rightarrow} \R\overline{M} (\an{X} /\an{S} , \tau ,\beta ) .\]
     \item The derived moduli stacks $\R\overline{M} (X/S,\tau,\beta )$ satisfy a list of geometric relations reflecting the Behrend-Manin axioms of Gromov-Witten theory.
   \end{enumerate}
\end{thm}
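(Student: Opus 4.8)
The plan is to realize the moduli of stable maps as a locally closed substack of a derived mapping stack and to verify the hypotheses of the Artin--Lurie representability theorem in the non-archimedean analytic setting. For (1), I would start from the smooth Artin stack $\overline{M}_\tau$ of prestable curves with dual graph $\tau$, together with its universal curve $\pi\colon \ca{C}_\tau \to \overline{M}_\tau$; both have evident $k$-analytic avatars. Over $\overline{M}_\tau \times_k S$ one forms the relative derived mapping stack $\R\underline{\mathrm{Map}}_S(\ca{C}_\tau , X)$, and then imposes the open condition that the map be stable together with the locally constant (hence open-and-closed) conditions pinning down the class $\beta$ and the incidence pattern encoded by $\tau$. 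The substantive point is that the resulting derived prestack is a derived $k$-analytic stack locally of finite presentation: I would check that it is a hypercomplete sheaf for the analytic topology, that it is nilcomplete and infinitesimally cohesive, that it commutes with filtered colimits of discrete algebras, and that its $0$-truncation recovers the classical analytic stack $\overline{M}(X/S,\tau,\beta)$; the existence of a global analytic cotangent complex uses that $\pi$ is proper and flat.

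At a point $f\colon C\to X$, the relative cotangent complex $L_{\R\overline{M}/S}$ is computed by a fibre sequence relating it to the derived pushforward along $\pi$ of $f^{*}T_{X/S}$ and to $L_{\overline{M}_\tau}$: the tangent complex $T_{\R\overline{M}/S}|_{f}$ sits in a triangle with $R\pi_{*}(f^{*}T_{X/S})$ and the degree-$0$ deformations of the pointed curve. Since the fibres of $\pi$ are proper curves, $R\pi_{*}$ has cohomological amplitude $[0,1]$, so $L_{\R\overline{M}/S}$ has Tor-amplitude in $[-1,0]$; this yields both local finite presentation and the derived lci statement over $S$. Serre duality on the (nodal, possibly higher-genus) fibres, with dualizing sheaf $\omega_\pi$, is the technical engine controlling the obstruction space.

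For (2), I would use that analytification of derived $k$-analytic stacks commutes with the derived mapping-stack construction whenever the source is proper and flat over the base --- this is where relative GAGA for coherent cohomology on proper analytic spaces enters --- together with the facts that $\ca{C}_\tau$ analytifies to the universal analytic curve and that stability and the $\beta$-decoration are detected on $k$-points. One then checks that the canonical comparison morphism of derived analytic stacks is an isomorphism on $\pi_0$ and induces an equivalence on cotangent complexes, each side being the analytification of the algebraic cotangent complex computed above, and concludes that it is an equivalence.

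For (3), each Behrend--Manin-type axiom --- forgetful maps contracting a tail or an unstable component, gluing/boundary maps identifying two marked points, products over $\overline{M}_{0,n}$, fibre products along evaluation maps, and compatibility with morphisms of the target --- follows from functoriality of $\R\underline{\mathrm{Map}}_S(-,X)$ in the source curve and of the universal-curve construction in $\tau$. Each axiom becomes a Cartesian square of derived analytic stacks checked level by level, and compatibility of the quasi-smooth cotangent complexes under these operations follows from base change for proper pushforward along curves. The main obstacle is Part (1): verifying the representability hypotheses for the mapping prestack over the non-proper, stacky curve-moduli base and pinning down the precise truncation of the analytic cotangent complex, since this is what simultaneously yields the derived analytic structure, local finite presentation, and the derived lci property --- and it is here that the non-archimedean subtleties (the correct analytic site, Temkin-style descent, the analytic rather than formal cotangent complex) are concentrated.
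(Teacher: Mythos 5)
This statement is not proved in the paper at all: it is quoted verbatim as \cite[Theorems 1.1, 1.2]{porta_non-archimedean_2022-1} and serves as an external input (together with the companion result on virtual fundamental classes from \cite{porta_non-archimedean_2022}) rather than as a result established here. The paper's ``proof'' is a citation, so there is nothing in the source text to compare your argument against.

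Taken on its own terms, your sketch is a plausible high-level reconstruction of the Porta--Yu strategy: realize the moduli as a substack of a relative derived mapping stack $\R\underline{\mathrm{Map}}_S(\ca{C}_\tau , X)$ over $\overline{M}_\tau\times_k S$, verify the analytic Artin--Lurie/Lurie representability criteria (descent, nilcompleteness, infinitesimal cohesiveness, finiteness, existence of an analytic cotangent complex), identify $T_{\R\overline{M}/S}$ at a point with a triangle built from $R\pi_\ast(f^\ast T_{X/S})$, and read off the Tor-amplitude $[-1,0]$ from the cohomological amplitude of pushforward along the proper, flat, curve-fibred $\pi$; for (2) invoke relative GAGA along $\pi$ and compare cotangent complexes; for (3) promote each Behrend--Manin axiom to a Cartesian square of derived analytic stacks via functoriality in the source. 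These are the right ingredients. Two points worth flagging: first, the balance between proving representability directly and bootstrapping from the algebraic case via analytification is a genuine design choice in the cited work and your sketch does not commit to one; second, in the present paper only genus $0$ and trees enter, so the ``higher-genus Serre duality'' remark is extraneous to what is actually needed downstream. Neither is an error, but since the theorem is external, no comparison to the paper's own argument is possible.
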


We denote by $\trunc_0$ the truncation functor, so that $\overline{M} (X,\tau ,\beta ) = \trunc_0 \R\overline{M} (X/S , \tau ,\beta ) $.

\subsubsection{Relative derived non-archimedean stable maps.}

Higher tangency conditions at the $i$-th marked point can be considered in the derived theory, using infinitesimal thickenings of the domain curve along the sections \cites[][\S9]{porta_non-archimedean_2022}[][\S8]{porta_non-archimedean_2022-1}.
Given a contact order $m_i\in\N_{>0}$, evaluation maps with multiplicity are constructed
\[\ev_i^{m_i}\colon \R \overline{M} (X,\tau ,\beta )\rightarrow X_{i,\tau}^{m_i} .\]
Using this map, one can parametrize stable maps with contact order $m_i$ along a lci closed analytic subspace $\iota\colon Z_i\hookrightarrow Y$ at the $i$-th marked point by considering the substack given by the fiber product
\[\R \overline{M} (Y,\tau ,\beta ) \times_{X_{i,\tau}^{m_i}} Z_{i,\tau}^{m_i} ,\]
where $Z_{i,\tau}^{m_i}$ is obtained from $Z_i$ using the same procedure of thickening along the $i$-th section.

We use these evaluation maps with multiplicities to construct derived versions of the moduli spaces considered in \cite{keelFrobeniusStructureTheorem2019a}. Let $J$ be a finite set of cardinality $n$, and let $\p =(\p_j )_{j\in J}$ be a tuple of points in $\Sk (U,\Z )$. Recall that points in $\Sk (U,\Z )$ are valuations on $k(U)$ with integral values on $k^0 (U^0)$. Define
\[B = \lrbrace{j\in J\;\vert\; \p_j\neq 0} \text{ and } I = \lrbrace{j\in J\;\vert\; \p_j = 0}  .\]
For $j\in B$, we write $\p_j = m_j\nu_j$ where $\nu_j$ is a divisorial valuation with divisorial center $D_j\subset D$ and $m_j\in\N_{>0}$.
Given $\beta\in\NE (\ca{Y} )$, we define a sequence of derived moduli spaces:
\[\R\sm{M } (U,\p,\beta )\subset\R\sd{M} (U,\p ,\beta )\subset \R M (U,\p ,\beta )\subset \R\overline{M} (Y,\p ,\beta ) \subset \R\overline{M} (Y,\beta ).\]
These moduli spaces are defined as follows:
\begin{itemize}
  \item $\R\overline{{M}} (Y,\p, \beta )$ corresponds to stable maps $[C ,(p_j)_{j\in J} , f\colon C\rightarrow Y]$ such that for every $j\in B$, $p_j$ is mapped to $D_j$ with multiplicity at least $m_j$.
  \item $\R {M} (U,\p, \beta )$ corresponds to the substack of stable maps \linebreak${[C,(p_j)_{j\in J}, f\colon C\rightarrow Y ]}$ such that $p_j$ is mapped to the open stratum $D_j^{\circ}$ for all $j\in B$, and $f^{-1} (D)= \sum_{j\in B} m_j p_j$.
  \item $\R\sd{{M}}(U,\p ,\beta )$ corresponds to the substack of stable maps with stable domain curve.
  \item $\R\sm{{M}} (U,\p ,\beta )$ corresponds to the substack of stable maps\linebreak ${[C,(p_j)_{j\in J} , f\colon C\rightarrow Y]}$ such that:
  \begin{enumerate}
    \item $f^{\ast} (T_Y (-\log D))$ is a trivial vector bundle on $C$.
    \item $f(C)\cap (D\cap E) = \emptyset$.
    \item $f^{-1} (E)$ is a finite set of points without multiplicities, disjoint from the nodes and the marked points of $C$.
  \end{enumerate}
\end{itemize}
We also consider the underived moduli spaces
\[\overline{M} (Y,\p ,\beta ) = \trunc_0 \R \overline{M} (Y, \p ,\beta ) ,\]
and so on, which agree with the moduli spaces defined in \cite[][\S 3]{keelFrobeniusStructureTheorem2019a}
Note that all of these stacks are analytification of the corresponding algebraic versions, that the three leftmost inclusions are (Zariski) open, and that $\sd{M} (U,\p ,\beta )$ and $\sm{{M}} (U,\p,\beta )$ are varieties since we only consider rational curves.

\subsubsection{Non-archimedean Gromov-Witten invariants.}

Two conditions are needed to define numerical Gromov-Witten invariants: properness of the moduli space, to get a pushforward to a point, and a virtual fundamental class to cap cycles on the moduli space with. In the non-archimedean theory, rigid motivic Borel-Moore homology is used and a virtual fundamental class $[X/S]\in \BMho[d] (X/S , \Q_S (2d ))$ is associated to any derived lci morphism of derived analytic stack $\varphi\colon X\rightarrow S$ of virtual dimension $d$ \cite[][Definition 4.4]{porta_non-archimedean_2022}.

\begin{thm}[{\cite[][Theorem 1.1]{porta_non-archimedean_2022}}]
  Let $S$ be a rigid $k$-anaytic space and let $X\rightarrow S$ be a rigid $k$-analytic space smooth over $S$. Let $(\tau ,\beta )$ be an $A$-graph.
  \begin{enumerate}
    \item There exists a virtual fundamental class
    \[[\R\overline{M} (X/S , \tau ,\beta )]\in\BMho[d] (\R\overline{M} (X/S ,\tau ,\beta ) /S , \Q_S (2d)) ,\]
    where $d$ is the virtual dimension.
    \item The system of virtual fundamental classes satisfies the Behrend-Manin axioms of Gromov-Witten theory.
  \end{enumerate}
\end{thm}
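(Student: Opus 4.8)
The plan is to build the class entirely out of the derived structure, using the two inputs recalled above: that $\varphi\colon\R\overline{M}(X/S,\tau,\beta)\to S$ is locally of finite presentation and derived lci, and that these derived moduli stacks fit into a web of geometric relations reflecting the Behrend--Manin axioms. The first step is to fix a Borel--Moore homology theory for derived $k$-analytic stacks that carries a full Gysin calculus. I would take it to come from the six-functor formalism on rigid analytic motives: for $\varphi\colon X\to S$ one sets $\BMho[d](X/S,\Q_S(n))$ to be $\pi_0$ of the mapping space, in the category of motives over $S$, from $\Q_S(n)[2d]$ to $\varphi_!\varphi^!\Q_S$, and then extends the theory to derived analytic stacks by descent in the $\infty$-categorical setting. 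The formal properties needed downstream --- localization triangles, homotopy invariance, the projective bundle formula and hence purity $\varphi^!\Q_S\simeq\Q_X(d)[2d]$ for $\varphi$ smooth of relative dimension $d$, and smooth and proper base change --- are all available in the rigid analytic motivic context.

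The second step constructs the fundamental class of an arbitrary derived lci morphism $\varphi\colon X\to S$ of virtual dimension $d$. Zariski-locally on $X$ such a map factors as a closed immersion $X\hookrightarrow W$ into a smooth $S$-space followed by the smooth projection $W\to S$; purity provides a fundamental class $[W/S]$, and --- this is where the derived hypothesis is essential --- derived deformation to the normal bundle identifies $X$ with the derived vanishing locus of a section of the vector bundle $N_{X/W}$, whose rank is automatically the expected one. One then sets $[X/S] := 0^{!}_{N_{X/W}}[W/S]$ via the refined Gysin map of that section, exactly as in the Behrend--Fantechi construction but with the normal cone replaced by the derived normal bundle. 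Independence of the factorization and gluing of the local classes follow from functoriality of Gysin maps under composition and from excess-intersection formulas, themselves formal consequences of the six-functor package; this is precisely the content of the cited Definition 4.4.

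The third step, the Behrend--Manin axioms, is then a translation exercise. Each axiom --- mapping to a point, products corresponding to decompositions of $A$-graphs, compatibility with the morphisms forgetting tails, the isogeny relations attached to morphisms of graphs that contract edges, and the base-change (motivic) axiom --- is the numerical shadow of one of the geometric relations among the stacks $\R\overline{M}(X/S,\tau,\beta)$ supplied by the structural theorem: in each case that relation is realized by an honest morphism of derived analytic stacks which is itself derived lci, or by a derived fibre square. Since the class of step two is by construction compatible with Gysin pullback along derived lci morphisms, with derived base change, and with products, every such geometric relation transports directly into the corresponding identity of virtual classes. In effect the axioms already hold at the level of derived stacks, and the virtual class merely records them.

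The main obstacle I expect is step one together with the passage to stacks: assembling the six-functor formalism for rigid analytic motives with all the purity, base-change and descent statements it must satisfy, and in particular extending the fundamental-class calculus from derived analytic \emph{spaces} to derived analytic \emph{stacks} so that the automorphisms of stable maps are correctly accounted for, is the technical heart of the matter. Once that infrastructure is in place, steps two and three are the by-now-standard derived-geometric repackaging of Behrend--Fantechi and of the Behrend--Manin axioms and should go through formally.
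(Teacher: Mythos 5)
This statement is not proved in the paper: it is recalled verbatim as a theorem of Porta and Yu, cited as \cite[Theorem~1.1]{porta_non-archimedean_2022}, and the present paper uses it as a black box. So there is no ``paper's own proof'' to compare against; the correct move in your write-up would simply be to cite the source rather than reprove it.

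That said, your sketch is a fair reconstruction of what the cited work actually does, and the three steps you isolate (a six-functor Borel--Moore theory on rigid analytic motives with purity and base change; a fundamental class for derived lci morphisms via local factorization through smooth $S$-spaces and specialization to the derived normal bundle; translation of the stack-level Behrend--Manin relations into identities of classes by compatibility with Gysin pullback, derived base change and products) match the architecture of Porta--Yu. One small inaccuracy in your phrasing: for a quasi-smooth closed immersion $X\hookrightarrow W$ the construction is not that ``$X$ is the derived vanishing locus of a section of $N_{X/W}$''; rather one uses the derived deformation to the normal bundle over $\mathbb{A}^1$ together with the Thom/purity isomorphism for the vector bundle $N_{X/W}$ to define a specialization (Gysin) map, and applies it to $[W/S]$. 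In a genuinely derived context there is no canonical section to speak of; what the derived lci hypothesis buys you is that the normal \emph{cone} is already a vector bundle, so the Thom isomorphism applies directly. You are also right that the passage from derived analytic spaces to derived analytic \emph{stacks} is a real technical point (handled by descent in the cited source), and that the Behrend--Manin axioms for classes follow once compatibility of the derived lci fundamental class with Gysin pullbacks, derived base change, products and proper pushforward is established --- these are exactly the functoriality statements the present paper later invokes in Lemmas~\ref{lemma:restriction-fundamental-class}--\ref{lemma:splitting-fundamental-class}.
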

Take $S=  \Spf k$ and assume $k$ has characteristic $0$. Given an $A$-graph $(\tau ,\beta )$, the virtual fundamental class, lci closed subvarieties $Z_i\subset X$ with contact order $m_i\in\N_{>0}$ and the associated diagrams
\[
\begin{tikzcd}
  \R \overline{M} (X,\tau ,\beta ) \ar[r, "\ev_i^{m_i}"] \ar[d, "\st"] & X_{i,\tau}^{m_i} & Z_{i,\tau}^{m_i} \ar[l] \\
  \overline{M}_{\tau}
\end{tikzcd}
\]
one can define numerical Gromov-Witten classes and associated numerical\linebreak invariants using the usual procedures \cite[][Definition 8.1]{porta_non-archimedean_2022}. That the subvarieties $Z_i$ be lci is crucial to be able to define the virtual fundamental class.
These classes satisfy the Behrend-Manin axioms of Gromov-Witten theory as a consequence of these same axioms for the derived moduli spaces and the functoriality properties of the virtual fundamental class.

We need the following lemma to define the relevant numerical Gromov-Witten invariants later.
\begin{lemma}\label{lemma:underived}
  The derived moduli stack $\R \sm{M} (U,\p ,\beta )$ is underived, meaning we have a canonical equivalence
  \[\sm{M} (U,\p,\beta ) \overset{\sim}{\longrightarrow} \R \sm{M} (U,\p ,\beta ).\]
  In particular, it is smooth over $\overline{M}_{0,n}$.
\end{lemma}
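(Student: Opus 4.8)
The plan is to show that $\R\sm{M}(U,\p,\beta)$ is quasi-smooth of the expected dimension and that its truncation $\sm{M}(U,\p,\beta)$ has dimension equal to the virtual dimension; since a quasi-smooth derived stack whose truncation has dimension equal to the virtual dimension is automatically underived (the cotangent complex is concentrated in a single degree), this gives the equivalence. First I would compute the obstruction theory: the derived moduli stack $\R\overline{M}(Y,\beta)$ is derived lci over the point, so relative to $\overline{M}_{0,n}$ the deformation theory of a stable map $[C,(p_j),f]$ is governed by the complex $\R\Gamma(C, f^*T_Y(-\log D))$ after imposing the tangency conditions along the $D_j$ and along $E$. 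The three defining conditions of $\sm{M}$ are designed precisely so that this complex has no $H^1$: condition (1) says $f^*T_Y(-\log D)$ is the trivial bundle $\mathcal{O}_C^{\oplus 2}$, so on each genus-$0$ component $H^1$ vanishes, and because the domain is a tree of $\bbP^1$'s the Mayer--Vietoris/normalization sequence shows $H^1(C, f^*T_Y(-\log D))=0$ globally as well.

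Next I would account for the marked-point and exceptional-divisor constraints. The tangency conditions at the $p_j$ for $j\in B$ are cut out by the derived fiber product with $D_{j,\tau}^{m_j}$; since $D_j$ is a smooth (lci) divisor and the $p_j$ land in the open stratum $D_j^\circ$, these impose transverse linear conditions, each lowering the virtual dimension by $m_j$ without creating obstructions (the relevant normal-bundle $H^1$ vanishes again by triviality and the tree structure). Condition (3), that $f^{-1}(E)$ be reduced and disjoint from nodes and markings, together with condition (2), $f(C)\cap(D\cap E)=\emptyset$, guarantees that the incidence conditions with the exceptional curves $E_{ij}$ are also transverse: at a simple intersection point the map meets $E_{ij}$ with multiplicity one, so the corresponding evaluation map is a submersion there. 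Combining all of this, $\R\sm{M}(U,\p,\beta)$ is quasi-smooth over $\overline{M}_{0,n}$ with obstruction sheaf zero, i.e. the relative cotangent complex is a vector bundle in a single degree.

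Finally, to upgrade "quasi-smooth with vanishing $H^1$" to "underived," I would invoke the standard fact: a derived stack that is quasi-smooth and whose classical truncation is smooth of the same dimension as the virtual dimension is classical. The virtual dimension is $\dim\overline{M}_{0,n} + 2 - (\text{codimension of the tangency and incidence conditions})$ by the Euler-characteristic computation above, and since $\sm{M}(U,\p,\beta)$ is already known (from \cite{keelFrobeniusStructureTheorem2019a}, \S3) to be a variety of exactly this dimension, smoothness of the truncation over $\overline{M}_{0,n}$ follows from the vanishing of obstructions, and the canonical map $\sm{M}(U,\p,\beta)\to\R\sm{M}(U,\p,\beta)$ is an equivalence. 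The "in particular" statement is then immediate: smoothness over $\overline{M}_{0,n}$ is exactly the statement that the relative cotangent complex is a vector bundle concentrated in degree $0$, which we have just verified.

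The main obstacle I expect is the careful bookkeeping around condition (3) and the exceptional divisor: one must check that the infinitesimal-thickening construction of $E_{i,\tau}^{m_i}$ interacts well with the requirement that $f^{-1}(E)$ be reduced, so that no hidden obstruction or excess dimension creeps in from domain components that could in principle bubble off into $E$ — it is precisely this point that the introduction flags as the reason the argument fails for non-primitive cylinders, and here one uses that the primitivity/simplicity hypotheses of conditions (2)--(3) rule out such bubbling, leaving only transverse incidence conditions.
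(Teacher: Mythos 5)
Your proposal is correct in outline but takes a genuinely different route from the paper, and it is more complicated than necessary for what the statement actually requires. The paper's proof is a two-step citation argument: it invokes \cite[Lemma~3.6]{keelFrobeniusStructureTheorem2019a} for the \emph{classical} fact that $\sm{M}(U,\p,\beta)$ is smooth over $\overline{M}_{0,n}$, which immediately gives that its dimension equals the virtual dimension; it then invokes \cite[Proposition~2.14]{porta_non-archimedean_2022}, a general derived-geometric criterion stating that a derived lci stack whose truncation has dimension equal to the virtual dimension is underived. This completely avoids any explicit computation of the relative obstruction theory of the derived stack. You instead attempt to recompute the obstruction theory directly, showing $H^1(C, f^*T_Y(-\log D))=0$ using the triviality of the bundle on the genus-zero tree; this is in spirit a re-proof of the cited Lemma~3.6, and it buys a more self-contained argument, but it requires an extra input you don't address: that the relative obstruction theory of $\R\sm{M}(U,\p,\beta)$ over $\overline{M}_{0,n}$ in the derived non-archimedean framework is indeed governed by $R\Gamma(C, f^*T_Y(-\log D))$ together with the tangency modifications. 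The paper's criterion-based route deliberately sidesteps having to establish this.

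Two specific points in your write-up are muddled. First, you speak of "incidence conditions with the exceptional curves $E_{ij}$" needing to be transverse, but conditions (2) and (3) in the definition of $\sm{M}$ are \emph{open} conditions restricting the locus, not cutting conditions imposed by derived fiber products; no tangency constraint is imposed along $E$, so there is nothing to check transversality against. (The genuine subtlety with $E$ enters later in the paper, in Proposition~\ref{prop:nets-have-stable-domain}, not here.) Second, your final paragraph conflates two logically distinct criteria: if you have actually established vanishing of $H^1$ (i.e.\ the relative cotangent complex is a vector bundle in degree $0$), then the derived stack is smooth over $\overline{M}_{0,n}$ and hence classical, full stop — you do not also need the "dimension equals virtual dimension implies classical" criterion, which is what the paper uses as a substitute precisely \emph{because} it avoids computing $H^1$. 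Invoking both makes the argument look circular even though each individual path is sound.
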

\begin{proof}
  By \cite[][Lemma 3.6]{keelFrobeniusStructureTheorem2019a}, the moduli space $\sm{M} (U,\p ,\beta )$ is smooth over $\overline{M}_{0,n}$, thus its dimension equals the virtual dimension of $\R \sm{M} ( U,\p ,\beta )$.
  By \cite[][Proposition 2.14]{porta_non-archimedean_2022}, we deduce that the canonical closed immersion
  \[\sm{M} (U,\p ,\beta )\hookrightarrow \R\sm{M} (U,\p ,\beta )\]
  is an equivalence.
\end{proof}


\section{Tropical curves} \label{sec:trop-geometry}

In this section, we review the notion of spines, tropical curves and twigs in the affine manifold $\base$. In Construction \ref{constr:topology-trop-curves}, we define an explicit topology on the space of tropical curves. Then we define the \emph{tropical cylinders}, which are tropicalizations of the analytic cylinders we want to count, and the associated counts. These refine the spine counts defined in \cite{keelFrobeniusStructureTheorem2019a}. In order to keep track of the combinatorics, we define the notion of twig type associated to a cylinder.

\subsection{Space of tropical curves}
We will consider tropical curves in the $\Z$-affine manifold $M_{\R}$, which we refer to as the tropical base. These tropical curve will be used to parametrize analytic stable maps in $\sm{M} (U,\p ,\beta )$, so we require more than the usual balancing condition in their definition. Rather than giving precise definitions, which are spelled out in \cite[][\S 4]{keelFrobeniusStructureTheorem2019a}, we illustrate the relevant notions in the $2$-dimensional case through concrete examples.

\begin{example}\label{example:P2}
We choose as our running example the log Calabi-Yau surface $(Y,D)$ obtained from $\mathbb{P}^2$ by blowing up the three toric divisors at $2$ points. We denote by $D_1$, $D_2$ and $D_3$ the irreducible components of $D$, and by $E_{11}$, $E_{12}$, $E_{21}$, $E_{22}$, $E_{31}$ and $E_{32}$ the exceptional divisors.
\end{example}

\subsubsection{The canonical wall structure. }
 The tropical base carries a canonical wall structure denoted by $\wall = \bigcup_{n\geq 0} \wall^n$, which is essentially a collection of codimension $1$ integral cones with an attached wall-crossing function constructed inductively in a combinatorial way. Concretely, in the $2$-dimensional case, walls are rays starting from the origin of $\base$. The wall-crossing functions will not be considered in this article, so we omit them in the following outline of the construction (see \cite[][Construction 4.16]{keelFrobeniusStructureTheorem2019a}):
 \begin{itemize}
   \item Initial walls $\wall^0$: let $\trop{E}_{ij} \coloneqq \tau (E_{ij})$ be the tropicalization of the \linebreak exceptional divisor $E_{ij}$. It is a point in $\partial \bbase$ at the end of the ray corresponding to $D_i$. We also denote by $\trop{E}$ the union of $\trop{E}_{ij}$, so $\wall^0$ corresponds to rays of the fan of $Y_t$ that contain points of $\trop{E}$.
   \item $\wall^{n+1}$ from $\wall^n$: add to $\wall^n$ the rays generated by sums of two vectors in walls of $\wall^n$.
 \end{itemize}

\begin{example}
  For our running example $(Y,D)$, Figure \ref{fig:wall-structure-P2} shows $\wall^3$.

  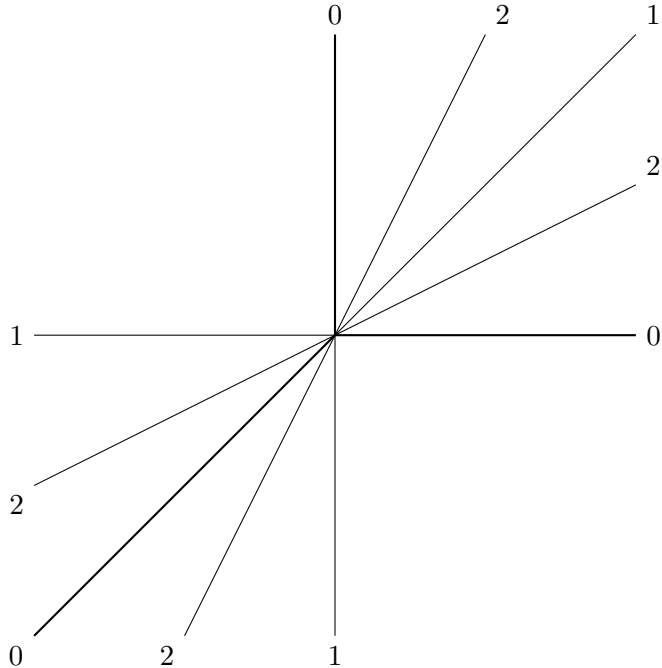
\begin{figure}
    \begin{tikzpicture}
\begin{scope}[scale=0.5]
\draw[thick] (8,0) node[right] {$0$}-- (0,0) -- (0,8) node[above]{$0$};
\draw[thick] (0,0) -- (-8,-8) node[below left]{$0$};

\draw (-8,0) node[left]{$1$} -- (0,0) -- (0,-8) node[below] {$1$};
\draw (0,0) -- (8,8) node[above right] {$1$};

\draw (8,4) node[above right]{$2$} -- (0,0) -- (4,8)node[above right]{$2$} ;
\draw (-8,-4) node[below left] {$2$}-- (0,0) -- (-4,-8) node[below left] {$2$};

\end{scope}

\end{tikzpicture}
    \caption{The $\wall^3$ part of the wall structure of Example \protect\ref{example:P2} (obtained in three steps). The numbers on the rays indicate at which step of the construction a ray was added, thick rays are initial walls.}
    \label{fig:wall-structure-P2}
  \end{figure}

\end{example}

The wall structure is a necessary ingredient in our notion of tropical curves. More importantly, it carries a lot of geometric information as the structure \linebreak constants of the mirror algebra can be obtained by analyzing the interaction between tropical cylinders and walls. We note that by construction, in the toric case there are no walls.

\subsubsection{Nodal metric trees}

A metric tree is a finite abstract tree $\Gamma$ together with an identification of every edge $e$ with a closed interval in $\intervalleff{0}{\ell}$ where $\ell\in\intervalleof{0}{+\infty}$. We refer to \cite{keelFrobeniusStructureTheorem2019a} for the notions of infinite and finite vertices, leg, node, irreducibility and stability of metric trees.

Let $J$ be a finite set of cardinality $n$, a metric tree with $n$ legs $[\Gamma ,(v_j)_{j\in J}]$ is a nodal metric tree $\Gamma$ with $1$-valent vertices $(v_j)_{j\in J}$ and no other $1$-valent vertices. It is called \emph{extended} if every $v_j$ is infinite. It is called \emph{simple} if there are no finite $2$-valent vertices.
We denote by $F\subset J$ the subset of indices corresponding to finite legs. Given a pointed tree $[\Gamma ,(v_j)_{j\in J}]$, we will frequently denote by $P_{ij}$ the path in $\Gamma$ connecting the marked points $v_i$ and $v_j$ for $i,j\in J$.

Nodal metric trees will be used as domains of tropical curves, spines or twigs to $\base$, notions which we now define.

\subsubsection{Tropical curves, spines and twigs}
We refer the reader to Figure \ref{fig:example-tropical-curves} for illustrations of the following notions.

Let $J$ be an indexing set as above. We will always fix a partition $I\coprod B = J$ and a subset $F\subset J$. In particular, we allow $F = \emptyset$ and systematically omit it from the notations in this case. In the following definitions, when $F= \emptyset$ we call the objects \emph{extended}.

Given a $\Z$-affine immersion $h\colon \Gamma\rightarrow \bbase$ from a nodal metric tree to $\bbase$, the slope of $h$ at a vertex $v$ of $\Gamma$ along an edge $e\in\Gamma$ is an integral vector which we call the \emph{weight vector} of $h$ at $v$ in the direction $e$, and denote by $w_{(v,e)}$.
The \emph{degree} of $h$ at $v$ along the edge $e$ is the norm of $w_{(v,e)}$, in the sense of Remark \ref{rem:norm-in-M}.

\begin{rmk}
  If $v$ is a vector parallel to the direction of a ray of the fan $\Sigma_t$, then the norm of $v$ is the index of $v$ in $M$ (that is, the least common multiple of its coordinates).
\end{rmk}

A \emph{tropical curve in $\base$} is a $\Z$-affine immersion $T= [\Gamma ,(v_j)_{j\in J} , h\colon \Gamma\rightarrow\bbase ]$ from an extended stable simple nodal metric tree to $\bbase$ that is balanced at every vertex of valency greater than $1$, constant on the $v_i$-leg for every $i\in I$, has weight vector on each $v_j$-leg in the direction of a ray of the fan if $j\in B$, and such that every infinite leg not labeled by a marked point is mapped to $\trop{E}$. \\
We denote by $\p = ( \p_j)_{j\in J}$ the tuple of vectors given by the slope of $h$ along the marked legs, and say that $T$ is a tropical curve \emph{of type $\p$}. Note that by definition $\p_j$ is non zero if and only if $j\in B$.
These tropical curves parametrize stable maps in $\sm{M} (U,\p ,\beta )$, and their space is denoted by $\TC(\base , \p)$. \\

A \emph{spine in $\base$} is a $\Z$-affine immersion $S = [\Gamma , (v_j)_{j\in J} , h\colon \Gamma\rightarrow \bbase]$ from a stable nodal metric tree to $\bbase$ with legs indexed by $J$, whose image meets $\partial\bbase$ precisely at the marked points indexed by $B$ and such that the sum of the weight vectors at each vertex $v$ of valency greater than $1$ is either $0$ ($h$ is balanced at $v$) or is contained in a wall ($v$ is a bending vertex). Furthermore, marked points indexed by $F$ correspond to finite legs.\\
If we denote by $\p = (\p_j)_{j\in J}$ the tuple of weight vectors of $h$ along the legs, we say that $S$ is a spine of type $\p^F$.
Spines parametrize restriction of analytic stable maps to the convex hull of the marked points in the domain curve. In particular, fixing a spine does not specify how the associated analytic stable maps meet the exceptional divisor. The space of spines of type $\p^F$ is denoted by $\SP (\base , \p^F)$. \\

A \emph{twig in $\base$} is a $\Z$-affine immersion $[\Gamma , (r,u_1 , \dots , u_t), h\colon \Gamma\rightarrow \bbase]$ where $\Gamma$ is a nodal metric tree, all the legs are marked and only the $r$-leg is finite, the image of $\Gamma$ is contained in $\wall$, each $u_i$ is mapped to $\trop{E}$, and $h$ is balanced at every vertex of valency greater than $1$. We refer to $r$ as the \emph{root} of the twig, and to the $u_i$ as the \emph{leaves}. \\
The weight vector of $h$ at $r$ is the \emph{direction} of the twig, and the (ordered) tuple of weights of $h$ at each leaves is called the \emph{combinatorial type} of the twig.

\begin{rmk}
  These notions play well together, in the sense that given a tropical curve $[\Gamma , (v_j)_{j\in J} , h\colon\Gamma\rightarrow \bbase]$ of type $\p$, if we denote by $\Gamma^s$ the convex hull of the marked points in $\Gamma$ then:
  \begin{enumerate}
    \item $[\Gamma^s , (v_j)_{j\in J} , h_{\vert \Gamma^s}]$ is a spine of type $\p$ \cite[][Lemma 4.23]{keelFrobeniusStructureTheorem2019a}.
    \item The restriction of $h$ to the closure of connected components of $\Gamma\setminus\Gamma^s$ are twigs.
  \end{enumerate}
  In particular, we have \emph{spine map}
  \[\Sp \colon \TC (\base ,\p ) \rightarrow \SP (\base ,\p ).\]
\end{rmk}

\begin{example}
  In Figure \ref{fig:example-tropical-curves} several tropical curves are drawn in the skeleton. They illustrate the general fact that many twigs are compatible with a given spine, and that different twigs associated to a spine can have varying number of leaves. Furthermore, we can very often vary the degree of the leaves in such a way that the ``shape'' of the twig is invariant, but the degrees of the leaves become very large. For example, the shape of the twig in the bottom right corner is realized by the twig types $\lrbrace{(2+n , 0) , (1+n , 0) , -(n,n)}$ for every $n\in\N$, where the degrees are $2+n$, $1+n$ and $n$.
\end{example}

\begin{figure}
\begin{subfigure}{\textwidth}
  \begin{tikzpicture}
\begin{scope}[scale=0.35]
\draw (8,0) node[right] {$\trop{E}_1$} -- (0,0) -- (0,8) node[above] {$\trop{E}_2$} ;
\draw (0,0) -- (-8,-8) node[below left] {$\trop{E}_3$};

\draw[ultra thin] (-8,0) -- (0,0) -- (0,-8) ;

\draw[ultra thin] (0,0) -- (8,8) ;

\draw[ultra thin] (8,4) -- (0,0) -- (4,8);
\draw[ultra thin] (-8,-4) -- (0,0) -- (-4,-8) ;

\draw[blue, thick] (2,8) node[above] {$1$} -- (2,2) -- (8,2) node[right] {$1$} ;

\draw[red, thick] (2,2) -- (0,0) -- (-8,-8) node[above] {$1$} ;

\end{scope}

\begin{scope}[scale=0.35,shift={(20,0)}]
\draw (8,0) node[right] {$\trop{E}_1$} -- (0,0) -- (0,8) node[above] {$\trop{E}_2$} ;
\draw (0,0) -- (-8,-8) node[below left] {$\trop{E}_3$};

\draw (-8,0) -- (0,0) -- (0,-8) ;

\draw (0,0) -- (8,8) ;

\draw (8,4) -- (0,0) -- (4,8);
\draw (-8,-4) -- (0,0) -- (-4,-8) ;

\draw[blue, thick] (2,8) node[above] {$1$} -- (2,2) -- (8,2) node[right] {$1$} ;

\draw[red, thick] (2,2) -- (0,0) -- (-8,-8) node[above] {$2$};
\draw[red, thick] (8,0) node[above]{$1$} -- (0,0) -- (0,8) node[left] {$1$};

\end{scope}
\end{tikzpicture}
\end{subfigure}

\begin{subfigure}{\textwidth}
  \begin{tikzpicture}
\begin{scope}[scale=0.35]
\draw (8,0) node[right] {$\trop{E}_1$} -- (0,0) -- (0,8) node[above] {$\trop{E}_2$} ;
\draw (0,0) -- (-8,-8) node[below left] {$\trop{E}_3$};

\draw[ultra thin] (-8,0) -- (0,0) -- (0,-8) ;

\draw[ultra thin] (0,0) -- (8,8) ;

\draw[ultra thin] (8,4) -- (0,0) -- (4,8);
\draw[ultra thin] (-8,-4) -- (0,0) -- (-4,-8) ;

\draw[blue, thick] (-4,8) node[left] {$3$ }-- (-4 ,-2) -- (-8,-6) node[left] {$4$};

\draw[red, thick] (-4,-2) -- (0,0) -- (0,8) node[left] {$1$} ;
\draw[red, thick] (0,0) -- (8,0) node[above] {$2$};

\end{scope}

\begin{scope}[scale=0.35,shift={(20,0)}]
\draw (8,0) node[right] {$\trop{E}_1$} -- (0,0) -- (0,8) node[above] {$\trop{E}_2$} ;
\draw (0,0) -- (-8,-8) node[below left] {$\trop{E}_3$};

\draw (-8,0) -- (0,0) -- (0,-8) ;

\draw (0,0) -- (8,8) ;

\draw (8,4) -- (0,0) -- (4,8);
\draw (-8,-4) -- (0,0) -- (-4,-8) ;

\draw[blue, thick] (-4,8) node[left] {$3$ }-- (-4 ,-2) -- (-8,-6) node[left] {$4$};

\draw[red, thick] (-4,-2) -- (0,0) -- (0,8) node[left] {$2$} ;
\draw[red, thick] (0,0) -- (8,0) node[above] {$3$};
\draw[red, thick] (0,0) -- (-8,-8) node[above] {$1$};

\end{scope}
\end{tikzpicture}
\end{subfigure}
  \caption{Tropical curves in $\base$ for Example \protect\ref{example:P2}. The spines are drawn in blue, and the twigs in red. The numbers correspond to the degree of the $\Z$-affine immersion along the legs.}
  \label{fig:example-tropical-curves}
\end{figure}
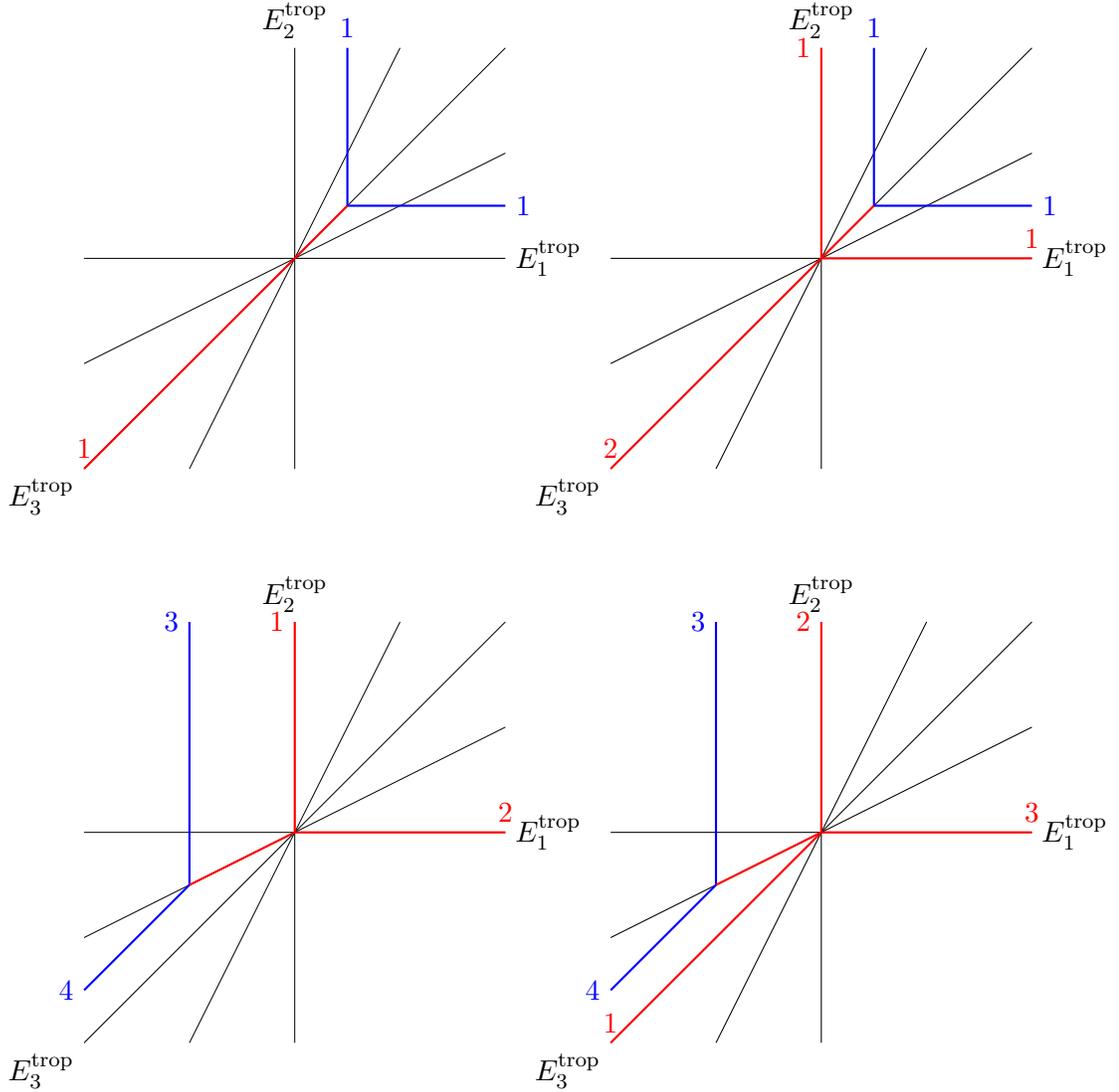

In practice to define a spine, it is enough to specify its behaviour around vertices. We can then recover an extended spine by extending the map using the $\Z$-affine structure of $\base$. This is made precise in the following construction, that we explicit mostly to set up notations about curve classes.

\begin{constr}\label{constr:extension-spines}
  Given an unextended spine $S = [\Gamma ,(v_j)_{j\in J} , h]$ of type $\p^F$, the associated extended spine $\widehat{S} = [\widehat{\Gamma}  ,(\widehat{v}_j)_{j\in J} , \widehat{h}]$ is the spine obtained by applying the following procedure for each $j\in F$:
  \begin{enumerate}
    \item Glue a copy of $\ell_j\coloneqq \intervalleff{0}{+\infty}$ at $v_j$, and replace the marked point $v_j$ by $\widehat{v}_j = \infty$ the infinite endpoint of $\ell_j$.
    \item Extend $h$ affinely on $\ell_j$ with slope $\p_j$.
  \end{enumerate}
  To each new leg, we can associate a curve class $\delta_j\in\NE (Y)$ using a piecewise-linear function $\varphi\colon M_{\R}\rightarrow N_1 (Y_t, \R )$. The curve class corresponding to the extension from $S$ to $\widehat{S}$ is then defined as $\widehat{\delta} = \sum_{j\in F} \delta_j$. Given $\beta\in\NE (Y)$, the associated extended curve class is $\widehat{\beta}\coloneqq \beta +\widehat{\delta}$.
\end{constr}

\subsubsection{Topology on $\TC (\base , \p)$}
The last ingredient we will need to parametrize the deformation procedure at the tropical level is a topology on $\TC (\base ,\p )$. We define it explicitly by giving a basis of open neighbourhoods, that we will use in \S \ref{subsec:tropical-deformation} to prove that we select connected components in the relevant spaces of tropical curves. This topology was considered in the first version of \cite{keelFrobeniusStructureTheorem2019a}, where it is proved that it is Hausdorff and that the natural tropicalization maps $\sm{M} (U,\p ,\beta )\rightarrow \TC (\base , \p) $ are continuous. The topology is essentially given by deformations of the domain and of the image of tropical curves.

\begin{constr}\label{constr:topology-trop-curves}
  Let $T= [\Gamma ,(v_j)_{j\in J} , h] \in\TC (\base ,\p )$, let $\varepsilon >0$ and let ${\ca{W} = (W_k)_{k\in K}}$ be an open covering of $\bbase$.
  Define a basic open neighbourhood $U\lr{T ,\varepsilon , \ca{W}}$ of $T$ as the set of tropical curves $[\Gamma ' ,(v_j' )_{j\in J} , h']$ that satisfy:
  \begin{enumerate}
    \item There is a continuous map $c\colon \Gamma '\rightarrow \Gamma$ contracting a subset of topological edges of $\Gamma '$, sending each $v_j'$ to $v_j$ and each node of $\Gamma'$ to a node of $\Gamma$.
    \item The sum of the length of all edges contracted by $c$ is less then $\varepsilon$.
    \item For each edge $e$ of $\Gamma$, let $e'$ be the edge of $\Gamma'$ such that $c(e')=e$. If $e$ has finite length, then the difference between the lengths of $e$ and $e'$ is less than $\varepsilon$. If $e$ has finite length, then the length of $e'$ is greater than $1/\varepsilon$.
    \item For each vertex $v'$ of $\Gamma '$, if $h(c(v'))\in W_k$ then $h'(v')\in W_k$.
    \item For each edge $e'$ of $\Gamma '$ not contracted by $c$, the derivative of $h$ on $c(e')$ is equal to the derivative of $h'$ on $e'$.
  \end{enumerate}
\end{constr}

\subsection{Tropical cylinders} \label{subsec:tropical-cylinders}

In this subsection, we refine the infinitesimal cylinder counts of \cite{keelFrobeniusStructureTheorem2019a} based on the combinatorial structure of twigs.

\begin{dfn}
Let $\p =(\p_1,\p_2 )$ be a vector of weights, such that $\p_1 + \p_2$ is parallel to the direction of a wall.
\begin{enumerate}
  \item An \emph{(infinitesimal) cylinder spine} of type $\p$ is a spine $S = [\intervalleff{-\varepsilon}{\varepsilon} , (v_1,v_2) , h]$  of type $\p$, where $0$ is the only bending vertex, and where $\varepsilon >0$ is chosen so that the image of $S$ intersects a single wall.
  \item An \emph{(infinitesimal) tropical cylinder} of type $\p$ is a balanced pointed tree in $\base$ obtained from an infinitesimal cylinder spine of type $\p$ by adding a single twig, and gluing an infinite constant leg to a point of the spine distinct from the bending vertex.
\end{enumerate}
  By construction, infinitesimal tropical cylinders have a single twig. We refer to the type of this twig as the \emph{twig type of the cylinder}. We call a tropical cylinder \emph{primitive} if the degree of every leg of the twig is equal to $1$ and all the legs have a different direction.

  Given an infinitesimal cylinder $V$, we define the associated extended tropical cylinder $\widehat{V}$ to be the tropical curve obtained after applying the extension procedure of Construction \ref{constr:extension-spines} to $\Sp (V)$.
\end{dfn}

\begin{example}
All the tropical curves in Figure \ref{fig:example-tropical-curves} are extended tropical cylinders. If we restrict the spine to a region of the domain such that the image only meets $\wall$ at the bending vertex, then we obtain infinitesimal cylinders. Only the top right tropical cylinder is primitive.
\end{example}

\begin{dfn} \label{dfn:spine-count}
  Given $\beta\in\NE (Y)$ and a spine $S$ of type $\p^F$, the count $N(S,\beta )$ is the length of a certain subset $F_w (S_w ,\beta)\subset {M}_{0,n} (U,\p ,\widehat{\beta})$, where $S_w$ is obtained from $S$ by gluing an interior leg to a point of $S$ distinct from the bending vertex. The subset is constructed in three steps:
  \begin{enumerate}
    \item Extend the spine to $\widehat{S}_w$.
    \item Look at $\ev_w^{-1} (h(w))$ in $\sm{{M}}_{0,n} (U ,\p , \widehat{\beta})\cap \Sp^{-1} (\widehat{S}_w)$.
    \item Consider the subset of stable maps that satisfy the toric tail condition \cite[][Construction 9.3]{keelFrobeniusStructureTheorem2019a}.
  \end{enumerate}
  It is proved in \cite[][Proposition 9.5]{keelFrobeniusStructureTheorem2019a} that this count is independent of the choice of $w$, by showing that it is the degree of $(\st ,\ev_w)$ restricted to some closed subset of the target. In particular, for cylinder spines, the count is just the degree of some restriction of the evaluation map at $w$.
\end{dfn}

We now define the counts associated to an infinitesimal tropical cylinder, they refine the cylinder spine count.

\begin{dfn}\label{dfn:cylinder-count}
  Let $V$ be a tropical cylinder (infinitesimal or extended), and let $\beta\in\NE (Y)$. Let $S= \Sp (V)$ be the associated cylinder spine, and let $S_w$ be as in Definition \ref{dfn:spine-count}. The \emph{count of tropical cylinders} associated to $V$ is
  \[N (V ,\beta ) \coloneqq \length\lr{ F_w (S_w ,\beta )\cap \Trop^{-1} (\widehat{V})} .\]
\end{dfn}

\begin{rmk}
  If $V$ is extended, then $\widehat{V} = V$ and $\widehat{\beta} = \beta$. If $V$ is not extended, then $N(V,\beta ) = N(\widehat{V} , \widehat{\beta})$.
\end{rmk}

\begin{rmk}
Given a cylinder spine $S$ and a mark $w$, we have
\[N(S,\beta ) =\sum_{V\in \Sp^{-1} (S_w)} N(V,\beta ) .\]
Alternatively, since cylinder spines only have one twig, this sum can be indexed by twig types. Given a curve class $\beta$, only finitely many twig types are realized by stable maps of class $\beta$.
\end{rmk}

\section{Primitive holomorphic cylinders} \label{sec-count-primitive-cyl}

In this section we prove the main Theorem \ref{theo:reduction-primitive-twig} together with its Corollary \ref{coro:reduction-elementary-cylinder}. The theorem is proved by using a deformation procedure parametrized by tropical curves. Using ideas similar to Kontsevich's formula for plane rational curves, we define a subspace in a moduli space of analytic stable maps, and obtain an equality between counts by looking at different degenerations of the domain curve in this subspace. The subspace is defined using tropical data. To prove that the counts are deformation invariants, we express them as the degree of a map which is proper and flat over the analytic deformation.

 In \S\ref{subsec:initial-data} we set up the notations for the tropical curves that will parametrize our deformation, and in \S\ref{subsec:tropical-deformation} we define the tropical deformation. In \S\ref{subsec:analytic-deformation} we pull back the tropical deformation to the analytic moduli space.
 The key results are Proposition \ref{prop:connected-components}, which shows that the tropical deformation cuts out connected components in the space of tropical curves, and Proposition \ref{prop:properness-restriction}, which shows that these connected components pull back to connected components of the smooth locus of the analytic moduli space up to restricting the domain curve. This properness argument relies on \ref{prop:nets-have-stable-domain} which is a small generalization of \linebreak\cite[][Proposition 10.1]{keelFrobeniusStructureTheorem2019a}.

 Finally, in \S\ref{subsec:degeneration} we look at different degenerations of the domain curve, and express the counts in terms of cylinder counts with one less twig together with some toric counts, which we evaluate explicitly. The key result is Proposition \ref{prop:splitting-formula}, relating counts before and after removing one twig to the tropical cylinder. Applying this formula inductively, we easily deduce Theorem \ref{theo:reduction-primitive-twig}.

\pagebreak
\subsection{Main results}
\begin{thm} \label{theo:reduction-primitive-twig}
Let $V$ be a primitive infinitesimal tropical cylinder with twig type $\w = (\w_s)_{1\leq s \leq t}$, let $\beta\in\NE (Y)$. Then
\[N(V, \beta ) = \sum_{\beta_1+\cdots +\beta_t = \beta} \prod_{s=1}^t N(V_s, \beta_s) ,\]
where $V_s$ is an infinitesimal cylinder of twig type $\w_s$ (see Construction \ref{constr:elementary-cylinders}).
\end{thm}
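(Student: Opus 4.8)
The strategy is an induction on the number $t$ of leaves of the twig, with the inductive step given by removing a single leaf from the twig. Since the excerpt has announced that Proposition \ref{prop:splitting-formula} does exactly this — ``relating counts before and after removing one twig to the tropical cylinder'' — the bulk of the argument reduces to that proposition, and here I would only organize the bookkeeping that turns the one-step splitting into the stated product-over-partitions formula.

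First I would set up the induction carefully. Given a primitive infinitesimal tropical cylinder $V$ with twig type $\w = (\w_s)_{1\le s\le t}$, the twig has $t$ leaves $u_1,\dots,u_t$, each mapped to $\Etrop$ with weight vector $\w_s$, and since $V$ is primitive each $\w_s$ has degree $1$ and the directions are pairwise distinct. The base case $t=1$ is tautological: $V$ itself is an infinitesimal cylinder of twig type $\w_1$, so we may take $V_1 = V$ and the formula reads $N(V,\beta) = N(V_1,\beta)$. For the inductive step I would apply Proposition \ref{prop:splitting-formula} to peel off the leaf $u_t$ (say): this should produce a tropical cylinder $V'$ whose twig has leaves $u_1,\dots,u_{t-1}$ — hence twig type $(\w_s)_{1\le s\le t-1}$ — together with the elementary cylinder $V_t$ of twig type $\w_t$ from Construction \ref{constr:elementary-cylinders}, and an identity of the form
\[
N(V,\beta) = \sum_{\beta' + \beta_t = \beta} N(V',\beta')\, N(V_t,\beta_t),
\]
where the sum ranges over effective decompositions $\beta' + \beta_t = \beta$ in $\NE(Y)$. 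The key point to check is that $V'$ is again a \emph{primitive} infinitesimal tropical cylinder, so the inductive hypothesis applies: this is immediate because deleting one leaf preserves the property that all remaining leaves have degree $1$ and pairwise distinct directions, and the spine (hence the cylinder-spine structure) is unchanged.

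Then I would simply substitute. By the inductive hypothesis applied to $V'$ and each class $\beta'$,
\[
N(V',\beta') = \sum_{\beta_1 + \cdots + \beta_{t-1} = \beta'}\ \prod_{s=1}^{t-1} N(V_s,\beta_s),
\]
where each $V_s$ is an infinitesimal cylinder of twig type $\w_s$. Plugging this into the one-step formula and merging the two nested sums into a single sum over decompositions $\beta_1 + \cdots + \beta_t = \beta$ gives
\[
N(V,\beta) = \sum_{\beta_1 + \cdots + \beta_t = \beta}\ \prod_{s=1}^{t} N(V_s,\beta_s),
\]
which is the claim. One should note that only finitely many terms are nonzero: by the remark after Definition \ref{dfn:cylinder-count} only finitely many twig types are realized in a fixed class, and more to the point Corollary \ref{coro:reduction-elementary-cylinder} shows $N(V_s,\beta_s)\ne 0$ for at most $\ell_{i(s)}$ classes $\beta_s$, so the sum is finite and well-defined.

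The genuine content — and the main obstacle — is entirely inside Proposition \ref{prop:splitting-formula}, not in the combinatorial assembly above. That proposition requires: (i) the tropical deformation of \S\ref{subsec:tropical-deformation} that slides the leaf $u_t$ out toward $\Etrop$ while keeping the rest of the cylinder rigid, and the verification (Proposition \ref{prop:connected-components}) that this cuts out connected components in $\TC(\base,\p)$ using the explicit topology of Construction \ref{constr:topology-trop-curves}; (ii) lifting this to the analytic moduli space and proving (Proposition \ref{prop:properness-restriction}, via Proposition \ref{prop:nets-have-stable-domain}) that the corresponding locus in $\sm{M}(U,\p,\widehat\beta)$ is proper and flat over the deformation base after restricting the domain curve, so that the count is a degree of a proper flat map and hence deformation invariant; and (iii) analyzing the two boundary degenerations of the domain — one recovering $N(V,\beta)$, the other splitting the domain into a piece contributing $N(V',\beta')$ and a piece contributing the elementary count $N(V_t,\beta_t)$, with the product structure and the sum over $\beta' + \beta_t = \beta$ emerging from the node-smoothing/gluing axiom of non-archimedean Gromov--Witten theory. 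The primitivity hypothesis is exactly what makes step (ii) work: as the introduction stresses, for higher intersection multiplicities the domain can grow bubbles mapped into $\ca{E}$ that the tropical picture does not see, breaking the connected-component selection. Granting Proposition \ref{prop:splitting-formula}, the theorem follows by the induction above.
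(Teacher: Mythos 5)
Your plan captures the right intuition — peel off one leaf at a time using Proposition~\ref{prop:splitting-formula} — but the induction as you've set it up does not close, because the output of a single splitting step is not a cylinder count, so the inductive hypothesis cannot be reapplied. Proposition~\ref{prop:splitting-formula} is an identity among counts of the auxiliary tropical curves $L_k$, $M_k$, $N_k$, not among cylinder counts $N(V',\beta')$: it reads $N(L_k,\beta-\widehat{\delta}_k) = \sum_{\beta_1+\beta_2=\beta} N(L_{k+1},\beta_1)\,N(N_k,\beta_2)$. The curve $L_1$ can indeed be identified with the extended cylinder count $N(\widehat{V},\cdot)$ via Proposition~\ref{prop:compute-L}(1), because its extra marked points are all \emph{interior} and carry divisor constraints, so the divisor axiom strips them away. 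But after one splitting, $L_2$ has $B_2^L = \{1,2,t^1\}$: the mark $t^1$ is now a \emph{boundary} mark, so $L_2$ has three boundary tangencies and is not a cylinder. There is no cylinder $V'$ with $N(V',\beta')=N(L_2,\beta'+\text{corr.})$ to feed back into the inductive hypothesis, and the divisor axiom doesn't remove boundary marks. This is why the paper does not identify the intermediate stages with cylinder counts at all; it runs the $L_k$ chain all the way to $L_{t+1}$ and closes the telescope with Proposition~\ref{prop:compute-L}(2), which says $N(L_{t+1},\cdot)$ is $1$ on the unique class $\widehat{\delta}_V + \sum_s\delta_s$ and $0$ otherwise. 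Your proof never invokes Proposition~\ref{prop:compute-L}, and without it the recursion has no anchor.

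A secondary issue is the base case. You declare $t=1$ tautological by ``taking $V_1 = V$,'' but $V_1$ is the specific elementary cylinder of Construction~\ref{constr:elementary-cylinders}, with bending vertex placed in $\intervalleoo{0}{x_{g^1}}$ and contact orders chosen for balancing; it is not the same tropical curve as $V$. What one needs is the a priori nonobvious fact that the count depends only on the twig type and $\beta$, and the paper obtains this as an \emph{output} of the $L_k$ machinery (Propositions~\ref{prop:compute-L} and~\ref{prop:splitting-formula} applied with $t=1$), not as a starting point. Finally there is a book-keeping layer you've also elided: the cylinder counts $N(V,\beta)$, $N(V_s,\beta_s)$ are related to the auxiliary counts $N(L_k,\cdot)$, $N(N_s,\cdot)$ only after correcting by extension curve classes ($\widehat{\delta}_V$, $\widehat{\delta}_s$, $\delta_s$), and the reindexing from the $\gamma$-variables to the $\beta$-variables in the final sum uses these corrections explicitly. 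The paper's computation tracks this carefully; your sketch conflates $N(N_s,\cdot)$ with $N(V_s,\cdot)$ without the shift $\gamma_s = \beta_s + \widehat{\delta}_s$.
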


The next lemma allows to simplify the sum, by identifying the curve classes contributing to non zero invariants.

\begin{lemma}
  Let $V$ be a primitive infinitesimal cylinder of type $\p =(\p_1 , \p_2 )$, whose twig has a single leaf of type $\w = \p_1 + \p_2$ in the direction of $\Etrop_i$. Then, then are at most $\ell_i$ curve classes $\beta$ such that $N(V_s , \beta )\neq 0$.
\end{lemma}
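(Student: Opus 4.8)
The plan is to pin down the curve class $\beta$ from the combinatorial data of $V$, up to the single discrete choice of which irreducible component of $\ca E_i$ the corresponding stable maps meet, and then to note that there are exactly $\ell_i$ such choices.

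First I would unwind Definition \ref{dfn:cylinder-count}: $N(V,\beta) = \length\lr{F_w(S_w,\beta)\cap\Trop^{-1}(\widehat V)}$, so if $N(V,\beta)\neq 0$ there is a stable map $[C,(p_j)_{j\in J},f\colon C\to Y]$ in $\sm{M}_{0,n}(U,\p,\widehat\beta)$, where $\widehat\beta = \beta+\widehat\delta$ with $\widehat\delta$ depending only on $\p$, with $\Trop(f)=\widehat V$, and satisfying the toric tail condition. Since $f$ lies in the smooth locus, two of the defining conditions of $\sm{M}$ tell us that $f^{-1}(E)$ is a finite reduced set of points disjoint from the nodes and marked points of $C$ and that $f(C)\cap(D\cap E)=\emptyset$; in particular no component of $C$ is mapped into $E$. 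As $V$ is primitive with a single twig carrying a single leaf, of degree $1$ and in the direction $\Etrop_i$, the equality $\Trop(f)=\widehat V$ forces $f^{-1}(E)$ to consist of a single reduced point $q$ with $f(q)$ in the open stratum of $\ca E_i$; since the $\ca E_{ij}$ are pairwise disjoint, $f(q)$ lies on a unique $\ca E_{ij}$ with $j\in\{1,\dots,\ell_i\}$, and the intersection there is transversal, so $\widehat\beta\cdot \ca E_{ij}=1$.

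Next I would read off the remaining intersection numbers of $\widehat\beta$. Since $f^{-1}(E)=\{q\}$, we get $\widehat\beta\cdot \ca E_{i'j'}=0$ for all $(i',j')\neq(i,j)$. On the boundary side, $f^{-1}(D)=m_1 p_1+m_2 p_2$ with $p_1,p_2$ mapped to the open strata of the two components of $\ca D$ along whose fan rays $\p_1$ and $\p_2$ point, which determines $\widehat\beta\cdot \ca D_k$ for every component $\ca D_k$ of $\ca D$. Now $\operatorname{Pic}(\ca Y)$ is generated by the strict transforms $\ca D_k$ of the toric boundary components together with the exceptional classes $\ca E_{ij}$, and the intersection pairing on the smooth projective surface $\ca Y$ is nondegenerate modulo numerical equivalence, so a curve class is determined by its intersection numbers with $\{\ca D_k\}\cup\{\ca E_{ij}\}$. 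Hence $\widehat\beta$, and therefore $\beta=\widehat\beta-\widehat\delta$, is completely determined by $V$ and the index $j$; as $j$ ranges over $\ell_i$ values, at most $\ell_i$ curve classes $\beta$ satisfy $N(V,\beta)\neq 0$.

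The only step requiring genuine care is the passage from $\Trop(f)=\widehat V$ to the assertion that $f^{-1}(E)$ is a single reduced point lying on one $\ca E_{ij}$: this rests on the dictionary of \cite{keelFrobeniusStructureTheorem2019a} between the twig of a tropical curve and the way the associated analytic stable map meets the exceptional divisor — the leaves of the twig are in bijection with the points of $f^{-1}(E)$, the degree of a leaf being the local contact multiplicity. Granting this, together with the bookkeeping of the shift $\widehat\delta$, the remainder is the inspection of curve classes on $\ca Y$ announced before the corollary.
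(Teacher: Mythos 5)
Your argument is essentially the paper's: both reduce to observing that $\beta\cdot D_k$ is forced by compatibility with $\p$, that the tropical twig--exceptional dictionary forces $f^{-1}(E)$ to be a single transversal point on one of the $\ell_i$ components $\ca{E}_{ij}$, and that a curve class is determined by its intersections with the $\ca{D}_k$ and $\ca{E}_{ij}$ (the paper phrases this via the splitting $N_1(Y)\simeq N_1(Y_t)\oplus\Z^E$, you via nondegeneracy of the pairing — the same fact). Your write-up is a bit more explicit about unwinding the definitions of $\sm{M}$ and the extension class $\widehat\delta$, but the content and route are the same.
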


\begin{proof}
Since the extension curve class is uniquely determined, the question\linebreak is equivalent to determining $\beta$ such that $\sm{M}_{0,3} (U,\p ,\beta)\cap \Trop^{-1} (\widehat{V}_w)$ is non empty.

A necessary condition for $\sm{M} (U,\p ,\beta)$ to be non empty is that the curve class ${\beta}$ be compatible with $\p$ \cite[][Remark 3.5]{keelFrobeniusStructureTheorem2019a}.
This determines the intersection numbers $\beta\cdot D_i$, so the only freedom in choosing $\beta$ lies in the intersection numbers $\beta\cdot E_{ij}$.

By construction, if a stable map $\eta$ in $\sm{M} (U ,\p ,\beta)$ meets the exceptional divisor $E_{kj}$ with multiplicty $m$, then $\Trop (\eta )$ has a twig in the direction $\Etrop_k$ of degree $m$. Thus, elements of $\sm{M} (U,\p ,\beta )\cap \Trop^{-1} (\widehat{V}_w)$ only meet a single irreducible component of $E_i$ with multiplicty $1$.
There are $\ell_i$ such components, each giving rise to a different curve class since the intersection numbers induce an isomorphism $N_1 (Y)\simeq N_1 (Y_t) \oplus \Z^E $.
\end{proof}

As a direct consequence, we get:

\begin{coro} \label{coro:reduction-elementary-cylinder}
  Let $V$ be a primitive infinitesimal cylinder of twig type\linebreak ${\w = (\w_s )_{1\leq s\leq t}}$. For each $s$, let $\Etrop_{i(s)}$ be the direction of the corresponding twig.

  Then there are at most $\prod_{1\leq s\leq t} \ell_{i(s)}$ curve classes such that $N(V,\beta )\neq 0$. Such a curve class $\beta$ is determined by the choice of an irreducible component $E_{i(s) j}$ for all $s$, and we then have
  \[N(V,\beta ) = \prod_{s=1}^t N(V_s ,\beta_s ),\]
  where $\beta_s$ is the curve class whose intersection number with each irreducible \linebreak component of $E$ is $0$ except for $E_{i(s) j}$.
\end{coro}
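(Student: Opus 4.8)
The plan is to deduce the corollary from Theorem~\ref{theo:reduction-primitive-twig} and the lemma just established, the only content being a bookkeeping of curve classes. I start from the splitting identity $N(V,\beta)=\sum_{\beta_1+\cdots+\beta_t=\beta}\prod_{s=1}^t N(V_s,\beta_s)$ and observe that a summand is nonzero only if $N(V_s,\beta_s)\neq 0$ for every $s$. Since $V$ is primitive, every $\w_s$ has degree $1$, so each $V_s$ (Construction~\ref{constr:elementary-cylinders}) is an infinitesimal cylinder whose twig has a single leaf of degree~$1$ and is therefore itself primitive; the lemma then applies to $V_s$ and tells us that $N(V_s,\beta_s)\neq 0$ forces $\beta_s$ to be one of at most $\ell_{i(s)}$ classes, namely the unique class with $\beta_s\cdot E_{i(s)j}=1$ for a single choice of $j$, with $\beta_s\cdot E_{kl}=0$ for every other irreducible component of $E$, and with the intersection numbers $\beta_s\cdot D_i$ fixed by the requirement that $\beta_s$ be compatible with the type of $V_s$ \cite[][Remark 3.5]{keelFrobeniusStructureTheorem2019a}.

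Next I would show that for a fixed $\beta$ there is at most one such decomposition. Here primitivity of $V$ enters a second time: its twig has leaves in pairwise distinct directions, hence $s\mapsto i(s)$ is injective and the $E$-intersection numbers of the various $\beta_s$ are supported on the disjoint blocks $\{E_{i(s)j}\}_j$. Pairing the relation $\beta=\sum_s\beta_s$ with a component $E_{i(s)j}$ recovers $\beta_s\cdot E_{i(s)j}$ from $\beta$ alone, which singles out the component $E_{i(s)j}$ for which this number equals $1$; combined with the forced values $\beta_s\cdot D_i$ and the isomorphism $N_1(Y)\simeq N_1(Y_t)\oplus\Z^E$, this determines $\beta_s$ uniquely. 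Conversely a choice of a component $E_{i(s)j}$ for each $s$ yields classes $\beta_s$ and a candidate $\beta=\sum_s\beta_s$, which gives both the bound $\prod_{1\leq s\leq t}\ell_{i(s)}$ on the number of $\beta$ with $N(V,\beta)\neq 0$ and the description of such $\beta$ in the statement.

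Plugging this uniqueness back into Theorem~\ref{theo:reduction-primitive-twig} finishes the argument: for a $\beta$ admitting such a decomposition the sum collapses to its single surviving term $\prod_{s=1}^t N(V_s,\beta_s)$, and for every other $\beta$ both sides vanish, since a surviving summand would force $\beta\cdot E_{kl}=0$ whenever $k\notin\{i(1),\dots,i(t)\}$ and would force the numbers $\beta_s\cdot E_{i(s)j}$ to add up consistently. I do not anticipate a genuine obstacle; the two points worth a sentence of care are that primitivity of $V$ is really inherited by each $V_s$, so that the lemma is applicable, and that the $D$-intersection numbers of $\beta_s$ — not merely its $E$-part — are pinned down, so that $\beta_s$ is a well-defined curve class, which is precisely the compatibility statement already invoked in the proof of the lemma.
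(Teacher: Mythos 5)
Your argument is correct and spells out exactly what the paper compresses into ``as a direct consequence.'' You correctly invoke Theorem~\ref{theo:reduction-primitive-twig}, note that each $V_s$ is a primitive single-leaf cylinder to which the preceding lemma applies, and then use the injectivity of $s\mapsto i(s)$ (forced by primitivity of $V$: the twig legs have pairwise distinct directions) together with the isomorphism $N_1(Y)\simeq N_1(Y_t)\oplus\Z^E$ to show the decomposition $\beta=\sum_s\beta_s$ is unique once $\beta$ is fixed, so the sum collapses to a single product; this matches the paper's intended derivation.
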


\subsection{Initial data and notations} \label{subsec:initial-data}

We fix once and for all a tropical cylinder $V = [\Gamma^0 , (v_1^0 ,v_2^0 ,v_w^0) , h^0]$ associated to a cylinder spine. Let $\widehat{V}$ denote the extended tropical cylinder associated to $V$. We assume that $V$ is primitive of\linebreak type ${\w = (\w_s )_{1\leq s\leq t}}$. This means that each $\w_s$ is a primitive integral vector, and that $V$ has a single twig with $t$ leaves. \\
Let $r$ and $(u_1,\dots , u_t)$ denote respectively the root and the leaves of the twig. We denote by $\w_0 = \sum_s \w_s$, and let $\sigma$ be the wall with direction $-\w_0$.

Our assumptions imply that the twig is a tree with a single $(t+1)$-vertex mapped to the origin. The $k$-th leaf is an interval $[0 ,+\infty]$, with $0$ mapped to the origin and $h^0$ being a bijection to the ray $\R_{\geq 0} \w_k$.
For each $k$, we fix $x_{t^k}\neq O$ and $x_{g^k}\in \intervalleoo{O}{x_{t^k}}$ in $\R_{\geq 0} \w_k$.
We also fix points $x_w$ and $x_{w'}$ not lying inside walls.

\begin{constr}\label{constr:elementary-cylinders}
For each $1\leq k\leq t$, we define an infinitesimal cylinder $V_k$ with twig type $\w_k$, \emph{i.e.} whose twig has a single leaf of degree $1$ in the $\w_k$ direction. We let the bending vertex be mapped to a point in $\intervalleoo{0}{x_g^k}$, and choose the contact orders of the two boundary legs such that $V_k$ is balanced.
\end{constr}

We now define the following tropical curves (Figure \ref{fig:tropical-curves}):

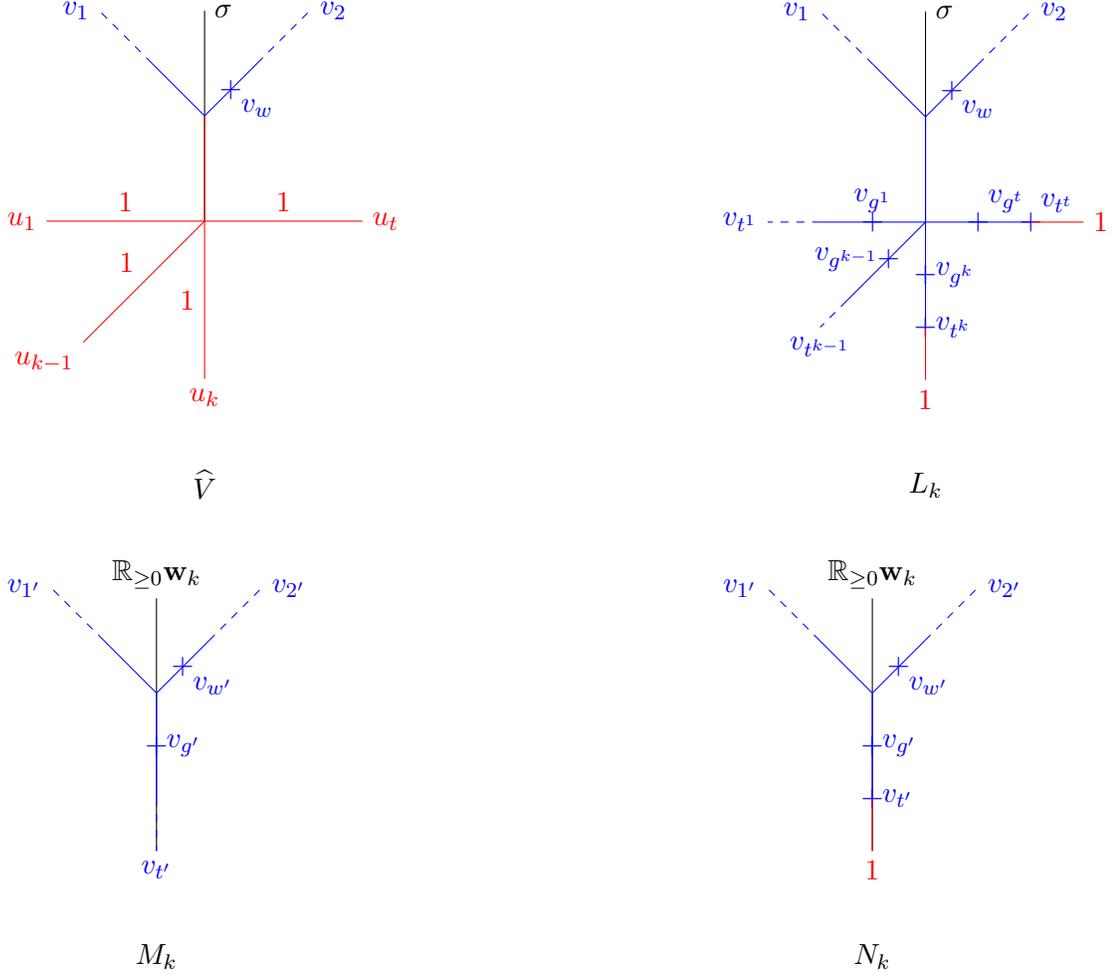
\begin{figure}
\begin{subfigure}{0.3\textwidth}
  \begin{tikzpicture}[scale=0.7]
\tikzstyle{length} = [<->, dotted, thin]
\tikzstyle{leg} = [dashed,blue]
\tikzstyle{twig} = [red]
\tikzstyle{interior-point} = [draw]
\tikzstyle{wall} = [thin]

  \draw[wall] (0,0) -- (0,4) node[right]{$\sigma$};

  \draw[blue] (0,2) -- (1,3) ;
  \draw[blue] (0,2) -- (-1,3) ;


  \draw[leg] (-1,3) -- (-2,4) node[above, left] {$v_1$};
  \draw[leg] (1,3) -- (2,4) node[above, right] {$v_2$} ;


  \node[blue] (w) at (0.5,2.5) {$+$};
  \draw[blue] (w.center) node[below right]{$v_w$};

  \draw[twig] (0,2) -- (0,0) ;
  \draw[twig] (0,0) -- (-3,0) node[midway , above]{$1$} node[left]{$ u_1$};
  \draw[twig] (0,0) -- (-2.3 , -2.3) node[midway , above left]{$1$} node[below left]{$u_{k-1}$};
  \draw[twig] (0,0) -- (0,-3)node[midway, left]{$1$} node[below] {$u_k $} ;
  \draw[twig] (0,0) -- (3,0) node[midway, above]{$1$} node[right]{$u_t$} ;

  \node (A) at (0,-5) {$\widehat{V}$} ;

\end{tikzpicture}
\end{subfigure}
\hfill
\begin{subfigure}{0.3\textwidth}
  \begin{tikzpicture}[scale=0.7]
\tikzstyle{length} = [<->, dotted, thin]
\tikzstyle{leg} = [dashed, blue]
\tikzstyle{twig} = [red]
\tikzstyle{interior-point} = [draw]
\tikzstyle{wall} = [thin]

  \draw[wall] (0,0) -- (0,4) node[right]{$\sigma$};

  \draw[blue] (0,0) -- (0,2) ;
  \draw[blue] (0,2) -- (1,3) ;
  \draw[blue] (0,2) -- (-1,3) ;
  \draw[blue] (0,0) -- (-2,0);
  \draw[blue] (0,0) -- (-1.5 , -1.5);
  \draw[blue] (0,0) -- (0,-2) ;
  \draw[blue] (0,0) -- (2,0) ;


  \draw[leg] (-1,3) -- (-2,4) node[above, left] {$v_1$};
  \draw[leg] (1,3) -- (2,4) node[above, right] {$v_2$} ;
  \draw[leg] (-2,0 ) -- (-3,0) node[left] {$v_{t^1}$};
  \draw[leg] (-1.5,-1.5) -- (-2 , -2) node[below]{$v_{t^{k-1}}$};


  \node[blue] (w) at (0.5,2.5) {$+$};
  \draw[blue] (w.center) node[below right]{$v_w$};

  \node[blue] (g) at (-1,0) {$+$} ;
  \draw[blue] (g.center) node[above]{$v_{g^1}$};

  \node[blue] (gg) at (-0.7,-0.7) {$+$} ;
  \draw[blue] (gg.center) node[left]{$v_{g^{k-1}}$};

  \node[blue] (gk) at (0,-1) {$+$};
  \draw[blue] (gk.center) node[right]{$v_{g^k}$};

  \node[blue] (tk) at (0,-2) {$+$};
  \draw[blue] (tk.center) node[right]{$v_{t^k}$};

  \node[blue] (gk) at (1,0) {$+$};
  \draw[blue] (gk.center) node[above right]{$v_{g^t}$};

  \node[blue] (tk) at (2,0) {$+$};
  \draw[blue] (tk.center) node[above right]{$v_{t^t}$};

  \draw[twig] (0,-2) -- (0,-3) node[below] {$1$};
  \draw[twig] (2,0) -- (3,0) node[right] {$1$};

  \node (A) at (0,-5) {$L_k$} ;

\end{tikzpicture}
\end{subfigure}
\\
\begin{subfigure}{0.3\textwidth}
  \begin{tikzpicture}[scale=0.7]
\tikzstyle{length} = [<->, dotted, thin]
\tikzstyle{leg} = [dashed,blue]
\tikzstyle{twig} = [red]
\tikzstyle{interior-point} = [draw,blue]
\tikzstyle{wall} = [thin]

  \draw[wall] (0,-1) -- (0,3.8) node[above]{$\R_{\geq 0} \w_k$};

  \draw[blue] (0,2) -- (1,3) ;
  \draw[blue] (0,2) -- (-1,3) ;
  \draw[blue] (0,0) -- (0,2) ;


  \draw[leg] (-1,3) -- (-2,4) node[above, left] {$v_{1'}$};
  \draw[leg] (1,3) -- (2,4) node[above, right] {$v_{2'}$} ;
  \draw[leg] (0,0) -- (0,-1) node[below] {$v_{t'}$};


  \node[blue] (w) at (0.5,2.5) {$+$};
  \draw[blue] (w.center) node[below right]{$v_{w'}$};

  \node[blue] (g) at (0,1) {$+$} ;
  \draw[blue] (g.center) node[right] {$v_{g'}$};


  \node (A) at (0,-3) {$M_k$} ;

\end{tikzpicture}
\end{subfigure}
\hfill
\begin{subfigure}{0.3\textwidth}
  \begin{tikzpicture}[scale=0.7]
\tikzstyle{length} = [<->, dotted, thin]
\tikzstyle{leg} = [dashed,blue]
\tikzstyle{twig} = [red]
\tikzstyle{interior-point} = [draw]
\tikzstyle{wall} = [thin]

  \draw[wall] (0,-1) -- (0,3.8) node[above]{$\R_{\geq 0} \w_k$};

  \draw[blue] (0,2) -- (1,3) ;
  \draw[blue] (0,2) -- (-1,3) ;
  \draw[blue] (0,0) -- (0,2) ;


  \draw[leg] (-1,3) -- (-2,4) node[above, left] {$v_{1'}$};
  \draw[leg] (1,3) -- (2,4) node[above, right] {$v_{2'}$} ;


  \node[blue] (w) at (0.5,2.5) {$+$};
  \draw[blue] (w.center) node[below right]{$v_{w'}$};

  \node[blue] (g) at (0,1) {$+$} ;
  \draw[blue] (g.center) node[right] {$v_{g'}$};

  \node[blue] (t) at (0,0) {$+$} ;
  \draw[blue] (t.center) node[right] {$v_{t'}$};

  \draw[twig] (0,0) -- (0,-1) node[below] {$1$} ;

  \node (A) at (0,-3) {$N_k$} ;

\end{tikzpicture}
\end{subfigure}

  \caption{The initial tropical cylinder $\widehat{V}$, the and the families of tropical curves $L_k$, $M_k$ and $N_k$.}
  \label{fig:tropical-curves}
\end{figure}

\begin{enumerate}
  \item $L_1$ is obtained from $\widehat{V}$ by adding two interior points attached to every direction of the twig. More precisely for the $i$-th leaf, we denote by $\overline{v}_{g^i}$ (resp. $\overline{v}_{t^i}$) the unique point being mapped to $x_{g^i}$ (resp. $x_{t^i}$), and glue to it the constant leg $[0 , +\infty = v_{g^i}]$ (resp. $[0 ,+\infty = v_{t^i}]$).
  \item For $2\leq k\leq t+1$, $L_k$ is obtained from $L_{k-1}$ by forgetting the $k$-th leaf of the twig, and assigning to the map the weight $w_k$ along the $t^k$-leg.
  \item $M_k$ is the balanced spine obtained from the extended tropical cylinder $\widehat{V}_k$ by substituting the leaf of the twig with a boundary marked point $v_{t'}^M$, and adding an interior leg $v_{g'}^M$ mapped to $x_{g^k}$ to this new leg.
  \item $N_k$ is obtained from the extended tropical cylinder $\widehat{V}_k$ by adding two interior legs attached to the twig, the new marked point $v_{t'}^N$ (resp. $v_{g'}^N$) being mapped to $x_{t^k}$ (resp. $x_{g^k}$).
\end{enumerate}

The curve $L_k$ has $2t+3$ marked points, indexed by the set
\[{J^L = \lrbrace{1,2,w , g^1, t^1, \dots ,g^t ,t^t}}. \]
 Let $B_0^L =\lrbrace{1,2}$ and $I_0^L = J^L\setminus B_0^L$ denote the boundary and interior marked points of $L_0$. For $1\leq k \leq t+1$, we set $B_k^L = B_{k-1}^L \cup \lrbrace{t^{k-1}}$ and $I_k^L = I_{k-1}^L \setminus \lrbrace{t^{k-1}}$.
These sets index the boundary and interior marked points of $L_k$.

The tropical curves $M_k$ and $N_k$ have $5$ marked points, indexed by the set ${J^M = J^N = \lrbrace{1',2',w',g',t'}}$. For the $M$-sequence the interior points are indexed by $I^M = \lrbrace{w',g'}$ and the boundary points by $B^M = \lrbrace{1',2',t'}$, while for the $N$-sequence the interior points are indexed by $I^N = \lrbrace{w',g',t'}$ and the boundary points by $B^N = \lrbrace{1',2'}$.

Finally, we introduce
\[J^g = \lrbrace{1,2,1',2' , w,w'  , g^1 , t^1\dots  , g^t , t^t, t'} = J^L\cup J^M\setminus\lrbrace{g'}, \]
the set $J^g$ has cardinality $2t+7$. For $0\leq k\leq t$ we define the partition given by boundary indices
\[B_k^g = \lrbrace{1,2,1',2' , t^1 ,\dots , t^{k-1} , t'} = B_k^L\cup B^M ,\]
 and interior indices
 \[I_k^g = \lrbrace{w,w' , g^1 , \dots, g^{k-1} , g^k , t^k , \dots , g^t ,t^t} = I_k^L\cup I_k^M\setminus \lrbrace{g'} .\]

Given a set $J$ indexing marked points and a subset of interior indices $\tilde{I}\subset J$,\linebreak we denote by $\ev_{\tilde{I}}$ the map given by simultaneous evaluation at marked points\linebreak of $\widetilde{I}$.

For $i\in\lrbrace{L,M,N,g}$ and $1\leq k\leq t$, denote by $\p_k^i$ a tuple of weights of length $n_i\coloneqq \lrvert{J^i}$. At the level of analytic moduli spaces, we define the following maps:
\begin{enumerate}
  \item $\Phi_k^L = (\st , \ev_{I_k^L})\colon \overline{{M}}(\an{Y} , \p_k^L ,\beta )\rightarrow \an{\overline{{M}}}_{0,2t+3}\times \lr{\an{Y}}^{2t-k+2}$.
  \item $\Phi_k^M = (\st , \ev_{I^M})\colon \overline{{M}} (\an{Y} , \p_k^M ,\beta )\rightarrow \an{\overline{{M}}}_{0,5}\times \lr{\an{Y}}^{2}$.
  \item $\Phi_k^N = (\st , \ev_{I^N})\colon \overline{{M}} (\an{Y} , \p_k^N ,\beta )\rightarrow \an{\overline{{M}}}_{0,5}\times \lr{\an{Y}}^{3}$.
  \item $\Phi_k^g = (\st , \ev_{I_k^g})\colon \overline{{M}} (\an{Y} , \p_k^g ,\beta )\rightarrow \an{\overline{{M}}}_{0,2t+7}\times \lr{\an{Y}}^{2t-k+3}$.
\end{enumerate}
 These maps tropicalize to maps between the tropicalization of spaces, which we denote by
 \[\sf{\Phi}_k^i\colon \TC (\base , \p_k^i)\rightarrow \overline{\mathsf{M}}_{0,n_i}\times \base^{I_k^i},\]
 where $\overline{\mathsf{M}}_{0,n_i}$ denotes the moduli space of pointed tropical curves \cite{abramovich_tropicalization_2014}.

\subsection{Tropical deformation} \label{subsec:tropical-deformation}

\begin{constr} \label{constr:trop-divisor}
For $1\leq k\leq t$, fix a primitive vector $\w_k'$ such that the mixed volume of $(\w_k, \w_k')$ is equal to $1$. We consider the line $\sf{H}_k^g$ with direction $\w_k'$ going through $x^{g^k}$, and the line $\sf{H}_k^t$ with direction $\w_k'$ going through $x_k^t$.
Their equations are given by integral affine functions on $\base$, that we pull back to a equations defining Cartier divisors $H_k^g$ and $H_k^t$ on $Y$. By the tropical intersection formula \cite{katz_tropical_2012}, we have $H_k^g\cdot D_{k} = H_k^t\cdot D_k = 1$.
\end{constr}

\begin{constr}
For $1\leq k\leq t+1$, we define:
\begin{enumerate}
  \item $\sf{V}_k^L = \sf{st} (L_k) \times x_w\times x_{g^1} \times \cdots x_{g^{k-1}} \times \sf{H}_k^g \times \sf{H}_k^t\times \cdots \times \sf{H}_k^t\times \sf{H}_k^g$.
  \item $\sf{V}_k^M = \sf{st} (M_k) \times x_{w'} \times x_{g^k}$.
  \item $\sf{V}_k^N = \sf{st}  (N_k) \times x_{w'}\times \sf{H}_k^g\times \sf{H}_k^t$.
\end{enumerate}
and set $\sf{TC}_k^i  =\lr{\sf{\Phi}_k^i}^{-1} (\sf{V}_k^i)$ for $i\in\lrbrace{L,M,N}$.
\end{constr}

\begin{prop} \label{prop:isolated-point}
  The point $L_k\in \sf{TC}_k^L$ is isolated. Similarly, $M_k\in\sf{TC}_k^M$ and $N_k\in\sf{TC}_k^N$ are isolated.
\end{prop}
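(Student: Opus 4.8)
The plan is to show that each of the three tropical curves $L_k$, $M_k$, $N_k$ is rigid for its respective incidence problem by a dimension count: the expected dimension of the relevant space of tropical curves is zero, and the curve in question is a transverse (hence isolated) solution. Concretely, I would work with the tropical maps $\sf{\Phi}_k^i\colon \TC (\base , \p_k^i)\rightarrow \overline{\mathsf{M}}_{0,n_i}\times \base^{I_k^i}$ and observe that $\sf{TC}_k^i$ is the preimage of the point $\sf{V}_k^i$ (for $M_k$) or of a product of a point in $\overline{\mathsf{M}}$ with affine subspaces $\sf{H}_k^g$, $\sf{H}_k^t$ cut out by the divisors of Construction \ref{constr:trop-divisor} (for $L_k$ and $N_k$). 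So the first step is a local dimension computation: near a tropical curve $T$, the space $\TC (\base ,\p )$ is locally a polyhedral complex whose dimension at $T$ equals (number of bounded edges) $+$ (dimension of the group of translations of the image) minus the number of balancing relations, or more usefully, the dimension of the deformation space is governed by the lengths of the bounded edges plus the position of one vertex. I would use the explicit basis of open neighbourhoods from Construction \ref{constr:topology-trop-curves}: a nearby tropical curve is determined by (i) the lengths of the uncontracted bounded edges and (ii) the image of one chosen vertex, subject to the constraint that contracted edges have length $0$ and that the combinatorial type is preserved (directions of legs are fixed by $\p_k^i$, and the directions of the interior legs and twig legs are fixed as well).

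For $M_k$: the underlying spine is a cylinder spine with one extra marked leg at $v_{g'}$ carrying the interior-point condition $\ev_{g'} = x_{g^k}$ and $\ev_{w'} = x_{w'}$, plus the stabilization condition $\sf{st}(M_k)$. A cylinder spine has essentially one modulus (the position of the bending vertex along the wall, equivalently one bounded edge length), and the two point conditions $x_{w'}$, $x_{g^k}$ together with fixing the abstract domain curve overdetermine this by the right amount: I would count that $\dim \TC = (\text{dim of }\overline{\mathsf{M}}_{0,5}) + \#I^M + (\text{something})$ matches $\dim(\overline{\mathsf{M}}_{0,5}\times\base^2)$, so the generic fibre is zero-dimensional, and then check transversality at $M_k$ directly — the derivative of $\sf{\Phi}_k^M$ at $M_k$ is injective because moving any edge length or vertex changes either the stabilized domain or the image of $v_{w'}$ or $v_{g'}$. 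For $L_k$: same idea but now the twig contributes $t-k+1$ leaves, each leaf carrying two interior conditions $x_{g^i}$ (or $\sf{H}_k^g$) and $x_{t^i}$ (or $\sf{H}_k^t$); each leaf of the twig is an unbounded ray whose only modulus is where it attaches to the rest of the graph, i.e., the length of the bounded edge connecting the $(t{+}1)$-valent vertex region to the interior marked points $\overline{v}_{g^i},\overline{v}_{t^i}$ — but two point-conditions on a single ray pin both the attachment point and force balancing, killing all freedom. I would make this precise leaf-by-leaf, and for $N_k$ the count is the $t=1$ analogue of $L_k$.

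The main obstacle I anticipate is getting the local structure of $\TC(\base,\p_k^i)$ right near curves that have a high-valence vertex (the $(t{+}1)$-valent vertex of the twig in $L_1$) and near curves with bending vertices, since the space of tropical curves is only piecewise-linear there and the naive parameter count can be off by contributions from the wall structure (a bending vertex can slide along a wall, adding a modulus) — I would need to track carefully that the incidence conditions $\ev = x_w$, $\ev = x_{g^k}$ at points \emph{not} lying in walls, together with $\sf{st}$, rigidify exactly these. A secondary subtlety: I must confirm that $\sf{V}_k^i$ is chosen generically enough (the points $x_{t^i}$, $x_{g^i}$, $x_w$, $x_{w'}$ were fixed "not lying inside walls" in \S\ref{subsec:initial-data}, and $\w_k'$ has mixed volume $1$ with $\w_k$) so that the solution $L_k$ (resp. $M_k$, $N_k$) is a \emph{transverse} intersection point, whence isolated; if transversality fails one would only get that it is a connected component, but I expect genuine transversality here from the mixed-volume-one condition guaranteeing $H_k^g\cdot D_k = H_k^t\cdot D_k = 1$. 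I would close by noting that isolatedness in the (Hausdorff) topology of Construction \ref{constr:topology-trop-curves} follows once the linear map on tangent cones is injective, since the topology is the one induced by the PL structure.
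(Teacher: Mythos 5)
You frame the argument as a dimension count plus transversality, whereas the paper argues directly that any $K\in U(L_k,\varepsilon,\ca{W})\cap\sf{TC}_k^L$ must equal $L_k$, using the explicit topology of Construction \ref{constr:topology-trop-curves}. The decisive step in the paper's proof --- which you flag as your ``main obstacle'' but do not resolve --- is ruling out deformations in which the $(t+1)$-valent twig vertex $v_0$ breaks into lower-valence vertices. The topology of Construction \ref{constr:topology-trop-curves} \emph{does} permit such deformations: condition (1) only demands a contraction $c\colon\Gamma'\to\Gamma$, so $\Gamma'$ may acquire new vertices that $c$ contracts to $v_0$. Your assertion that a nearby curve is determined by edge lengths and a vertex position ``subject to the constraint that \dots the combinatorial type is preserved'' is therefore not a consequence of the topology; it is precisely the thing to be proved. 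The paper closes this gap by observing that every edge incident to $v_0$ lies on a path between marked points, hence in the spine, so that a break of $v_0$ would survive in $\sf{st}(K)$, contradicting $\sf{st}(K)=\sf{st}(L_k)$ (which is enforced by $\sf{V}_k^L$). You invoke $\sf{st}$ only in passing and never make this argument; without it, the dimension-count strategy is not well-posed at $L_1$, since $\TC(\base,\p_k^L)$ is merely a polyhedral complex there, and rigidity on the top stratum containing $L_1$ does not by itself exclude nearby solutions on adjacent higher-dimensional strata.

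Once this point is supplied, the remainder of your plan does match the paper's second half: choose $\ca{W}$ so twigs remain intervals (a hypothesis you omit), use condition (5) of Construction \ref{constr:topology-trop-curves} to fix edge weights, and pin vertices via evaluation conditions --- in particular $\overline{v}_{t^i}$ lands in $\sf{H}_i^t\cap\R_{\geq 0}\w_i=\{x_{t^i}\}$, a single point thanks to the mixed-volume-one choice of $\w_i'$ that you correctly identify as the source of transversality; then the $g^i$ and $w$ conditions fix the finite edge lengths, and $v_0$ is forced to the origin since it lies on several distinct walls. A secondary inaccuracy is the claim that ``the expected dimension \dots is zero'': the incidence conditions strictly overdetermine the moduli (e.g.\ for $M_k$ there are roughly $3$ moduli against $6$ conditions), so the expected dimension is negative. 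This does not hurt the conclusion, but the paper sidesteps the issue entirely by never performing a dimension count.
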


\begin{proof}
  We need to prove that $L_k$ does not deform in $\sf{TC}_k^L$. Let $U = U(L_k ,\varepsilon , \ca{W})$ be an elementary open neighbourhood of $L_k$ in $\TC (\base , \p_k^L)$.

  We first note that the only possible deformations of the domain $\Gamma_k^L$ of $L_k$ in $\TC (\base ,\p_k^L )$ consist in changing the length of the finite edges (in particular, moving around the roots of the twigs), and deforming the unique $(t+1)$-vertex into lower valency vertices. Up to choosing the cover $\ca{W}$ such that twigs of $L_k$ are mapped to disjoint regions of $\bbase$, we can assume that twigs remain intervals throughout every deformation in $U$, and that their images do not contain the origin.

  Now let $K = [\Gamma , (v_j)_{j\in J_k^L} , h]\in U\cap \sf{TC}_k^L$, by definition we have a contraction $c\colon \Gamma\rightarrow \Gamma_k^L$. First we claim that the condition on the domain of $K$ ensures that the $(t+1)$-valent vertex does not break into several vertices of lower valency. Otherwise, since every edge incident to this vertex lies in the spine, these multiple vertices would show in $\sf{st} (K)$, which would then not equal $\sf{st} (L_k)$. Hence the domains of $K$ and $L_k$ coincide as combinatorial trees (without metric structure). The weight of $h$ on every edge is fixed, so $K$ is completely determined by the length of finite edges and the image of a single point. Let $v_0$ the $(t+1)$-valent vertex in $K$.

  For $i\geq k$, the root of the $i$-th twig in $L_k$ is a $3$-valent vertex. Thus it does not deform, and comes from a $3$-valent vertex $r_i'$ in $\Gamma$. Since intervals with a one-valent infinite vertex do not deform either, two of the edges incident to $r_i'$ are fixed: one is the $i$-th twig, the other is the $t^i$-leg. Then $r_i'$ lies in $\sf{H}_i^t\cap \R_{\geq 0} w_i = \lrbrace{x_{t^i}}$, hence the root of the twig does not deform. The condition on $v_{g^i}$ fixes the length of the two finite edges making up the path from $v_0$ to $r_i'$.

  For $i<k$, the length of the edge connecting $v_0$ to the $g^i$-leg is fixed by the condition that $v_{g^i}$ maps to $x_{g^i}$. Similarly, the condition on the image of $v_w$ fixes the lengths of the edges in the path connecting $v_w$ to $v_0$. Finally, we note that $v_0$ is mapped to the origin, since there are multiple edges incident to $v_0$ that are mapped to distinct walls, so the length of ever finite edge in $\Gamma$ equals the length of the corresponding vertices in $\Gamma_k^L$.

  All these observations put together show that $K = L_k$, proving the claim. The proof is similar, but simpler, for $M_k$ and $N_k$.
\end{proof}

We let $\sf{T}_k^L = \lrbrace{L_k}$, $\sf{T}_k^M = \lrbrace{M_k}$ and $\sf{T}_k^N = \lrbrace{N_k}$. The proposition is saying that $\sf{T}_k^i\subset \sf{TC}_k^i$ is a connected component.

\begin{constr}
Let $r\in\intervalleff{0}{+\infty}$. For $1\leq k \leq t$, consider the element $\Gamma_{k,r}$ (resp. $\Gamma_{k,r'}$) in $\sf{\overline{M}}_{0,2t+7}$ as in figure \ref{fig:trop-path},
obtained by gluing the stabilization of domains of $L_k$ and $M_k$ (resp. $L_{k+1}$ and $N_k$) along the vertices $v_{g^k}$ and $v_{g'}$, and varying the length of the horizontal edge, equal to the parameter $r$.
When $r=0$ the two abstract graphs are the same, so we obtain a path $\Delta\subset\sf{\overline{M}}_{0,n+7}$ parametrized by $r\in\intervalleff{-\infty}{+\infty}$, whose marked points indexed by $J^g$ and are partitioned into interior and boundary marked points as $J^g = I_k^g \cup B_k^g$.

\begin{figure}
  \begin{tikzpicture}
\tikzstyle{length} = [<->, dotted, thin]
\tikzstyle{twig} = [red]

\begin{scope}[scale=0.6]
\node[draw,circle] (L) at (-3.5,0) {$L_k$} ;
\node[draw,circle] (M) at (3.5,0) {$M_k$};
\draw (L.east)  -- (M.west) ;
\draw (0,0) -- (0,-2) ;
\draw[dashed] (0,-2) -- (0,-3)  node[below] {$v_g$};
\draw[length] (-2.7 ,0.2) -- (2.6,0.2) node[midway, rotate=0, above]{$r$};

\draw (L.south) -- (-3.5 , -2) -- (-4.5,-2);
\draw[dashed] (-4.5,-2) -- (-5.5,-2) node[left] {$v_{t^k}$ };

\draw (M.south) -- (3.5 , -2);
\draw[dashed] (3.5,-2) -- (3.5,-3) node[below] {$v_{t'}$ };

\draw[twig] (-3.5 , -2) -- (-3.5 , -3) node[below] {$u_k$};
  \node[] (1) at (0,-4.5) {$\widehat{\Gamma_r'}$};
\end{scope}

\begin{scope}[scale=0.6, shift={(10,0)}]
  \node[draw,circle] (L) at (-3.5,0) {$L_{k+1}$} ;
  \node[draw,circle] (M) at (3.5,0) {$N_k$};
  \draw (L.east)  -- (M.west) ;
  \draw (0,0) -- (0,-2) ;
  \draw[dashed] (0,-2) -- (0,-3)  node[below] {$v_g$};
  \draw[length] (-2.4 ,0.2) -- (2.7,0.2) node[midway, rotate=0, above]{$r$};

  \draw (M.south) -- (3.5 , -2) -- (4.5,-2);
  \draw[dashed] (4.5,-2) -- (5.5,-2) node[right] {$v_{t^k}$ };

  \draw (L.south) -- (-3.5 , -2);
  \draw[dashed] (-3.5,-2) -- (-3.5,-3) node[below] {$v_{t'}$ };

  \draw[twig] (3.5 , -2) -- (3.5 , -3) node[below] {$u_k$};

  \node[] (A) at (0,-4.5) {$\widehat{\Gamma_r}$};

\end{scope}

\end{tikzpicture}
  \caption{The path of tropical curves in $\sf{\overline{M}}_{0,2t+7}$, we only swap the $t^k$-leg together with the $k$-th twig with the $t'$-leg. The other twigs ($k+1$ through $t$) of $L_k$ are not modified.}    \label{fig:trop-path}
\end{figure}
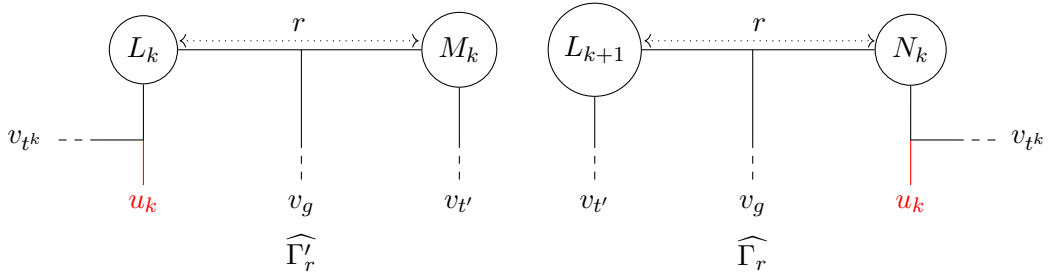
\end{constr}

\begin{constr}
  For $1\leq k\leq t$, we set:
  \[\sf{V}_k^g = \Delta \times x_{w} \times x_{w'} \times x_{g^1}\times \cdots \times x_{g^{k-1}}\times \sf{H}_k^g\times \sf{H}_k^t\times\cdots\times \sf{H}_t^g\times \sf{H}_t^t ,\]
  and define $\sf{TC}_k^g \coloneqq \lr{\sf{\Phi}_k^g}^{-1} (\sf{V}_k^g)$.
\end{constr}

We will work with tropical curves in $\TC_k^g$.
However in $\TC_k^g$ there are many tropical curves which are irrelevant to us, since we only impose conditions on the domain tropical curve and on interior points, but not on the twigs. In the next construction, we select the connected components in $\TC_k^g$ containing the relevant tropical curves for our count.

\begin{constr} \label{constr:connected-component}
For $K=[\Gamma , (v_j) , h]\in \TC_k^g$, we denote by $P_{ij}$ the path in $\Gamma$ from that vertex $v_i$ to the vertex $v_j$ in the spine of $K$.\\
Let $\sf{T}_k^g$ be the subset of $\TC^g$ consisting of tropical curves $K=[\Gamma , (v_j) , h]\in \TC^g$ such that:
\begin{enumerate}
  \item To $P_{wt^k}\cap P_{w't^k}$ is attached a single twig of degree $1$, with direction $\w_k$.
  \item For $k+1\leq i\leq t$, to $P_{wt^i}\cap P_{g^it^i}$ is attached a single twig of degree $1$, with direction $\w_i$.
\end{enumerate}
\end{constr}

\begin{prop} \label{prop:connected-components}
For $1\leq k\leq t$, the subset $\sf{T}_k^g\subset \TC_k^g$ is a union of connected components.
\end{prop}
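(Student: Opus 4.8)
The plan is to follow the same strategy used in the proof of Proposition \ref{prop:isolated-point}: analyze the possible deformations of a tropical curve $K = [\Gamma, (v_j), h] \in \sf{TC}_k^g$ and show that the conditions (1) and (2) defining $\sf{T}_k^g$ are stable under deformation, i.e.\ they are both open and closed conditions. Since $\sf{TC}_k^g$ is the preimage under the continuous map $\sf{\Phi}_k^g$ of $\sf{V}_k^g$, which only constrains the domain (via the path $\Delta$) and the interior marked points, the twigs are \emph{a priori} free to deform. So the content is to show that for a tropical curve in $\sf{T}_k^g$, no nearby tropical curve in $\sf{TC}_k^g$ can alter the combinatorial structure of the twigs attached along the prescribed paths.

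First I would fix $K \in \sf{T}_k^g$ and take an elementary open neighbourhood $U = U(K, \varepsilon, \ca{W})$ as in Construction \ref{constr:topology-trop-curves}, choosing the cover $\ca{W}$ fine enough that the (finitely many) twigs of $K$ are mapped into pairwise disjoint regions of $\bbase$ avoiding the origin, and that the paths $P_{wt^i}$, $P_{w't^k}$ are separated as well. For a deformation $K' \in U \cap \sf{TC}_k^g$ with contraction $c\colon \Gamma' \to \Gamma$: because $\sf{st}(K') \in \Delta$ forces the stabilized domain to be one of the two trees in Figure \ref{fig:trop-path}, the spine's combinatorial type is essentially rigid — the only freedom is the length of the horizontal edge (the parameter $r$) and the lengths of finite edges; in particular the attachment points of twigs along $P_{wt^k}\cap P_{w't^k}$ and along $P_{wt^i}\cap P_{g^it^i}$ cannot migrate off these paths. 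Next, each twig of degree $1$ in direction $\w_i$ has a $3$-valent root, which does not deform, and a single infinite leaf mapped to $\Etrop$; since a leaf is an interval $[0,+\infty]$ with balancing condition and its direction is locally constant (condition (5) of the topology, preserving derivatives on uncontracted edges), it cannot split, gain weight, or change direction within $U$. Conversely, no \emph{new} twig can appear along these paths in a small deformation: that would require a vertex of valency $\geq 3$ in the spine to split off an edge mapped into $\wall$, but this would change $\sf{st}(K')$ and violate membership in $\Delta$, or the new twig would have to come from contracting an edge, contradicting the requirement that edges only get contracted (not created) by $c$. For the twigs indexed $k+1 \le i \le t$ the argument is the same as in Proposition \ref{prop:isolated-point}, using that the conditions on $v_{t^i} \in \sf{H}_i^t$ and $v_{g^i} \in \sf{H}_i^g$ pin down their roots. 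This shows $\sf{T}_k^g$ is open in $\sf{TC}_k^g$.

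Closedness follows by essentially the reverse reasoning: if $K' \in \sf{TC}_k^g$ lies in the closure of $\sf{T}_k^g$, then $K'$ is a limit of curves with the prescribed single-twig-of-degree-$1$ structure along the relevant paths, and since degrees of legs are integers and directions of legs are locally constant on $\TC(\base, \p_k^g)$, the limit also has a single degree-$1$ twig in direction $\w_i$ attached along each of these paths — no merging or splitting of twigs can happen in the limit because the combinatorial type of the spine (controlled by $\Delta$) is already fixed and the total contact data with $\Etrop$ is continuous and integral. Hence $\sf{T}_k^g$ is also closed, so it is a union of connected components of $\sf{TC}_k^g$.

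The main obstacle I anticipate is making precise the claim that a twig cannot ``move off'' the prescribed path $P_{wt^k} \cap P_{w't^k}$ — more concretely, ruling out the scenario where, in a nearby curve, the spine bends at a slightly different vertex so that the twig's attachment point lands on a different portion of the domain, or where the balancing/bending conditions allow a twig to slide along an edge. This requires a careful bookkeeping of which vertices of the spine can support a bending (namely those mapped into $\wall$) versus which must remain balanced, combined with the rigidity of $\sf{st}(K')$ coming from $\Delta$. The ancillary point that one must also confirm is that the conditions imposed by $\sf{V}_k^g$ on the interior points $v_{g^i}, v_{t^i}$ (lying on $\sf{H}_i^g$, $\sf{H}_i^t$) genuinely pin down the twig roots — this uses the mixed-volume normalization from Construction \ref{constr:trop-divisor} ensuring $\sf{H}_i^t \cap \R_{\geq 0}\w_i$ is a single point, exactly as in the proof of Proposition \ref{prop:isolated-point}.
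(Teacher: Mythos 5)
Your overall strategy matches the paper's: establish that $\sf{T}_k^g$ is both open and closed in $\sf{TC}_k^g$ by working with an elementary neighbourhood $U(K,\varepsilon,\ca{W})$ from Construction~\ref{constr:topology-trop-curves}, exploiting the rigidity of $3$-valent roots and the undeformability of infinite legs. However, there is one step whose justification as you state it does not hold. You write that ``because $\sf{st}(K')\in\Delta$ forces the stabilized domain to be one of the two trees in Figure~\ref{fig:trop-path}, the spine's combinatorial type is essentially rigid\dots in particular the attachment points of twigs\dots cannot migrate off these paths.'' This inference is invalid: stabilization forgets the twigs entirely, so a constraint on $\sf{st}(K')$ places no direct restriction on where a twig attaches inside the full domain $\Gamma'$. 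The paper's argument is purely combinatorial and does not invoke $\Delta$ at all: since $r_i$ is $3$-valent, its preimage under the contraction $c$ is a single $3$-valent vertex $r_i'$ (a contracted subtree mapping to a $3$-valent vertex would violate simplicity); the $t^i$-leg and the twig leg are both infinite, hence uncontracted, so both remain incident to $r_i'$; and because every path to the marked point $v_{t^i}$ passes through the unique vertex attached to the $t^i$-leg, we get $r_i'\in P_{wt^i}\cap P_{g^it^i}$ (resp. $P_{wt^k}\cap P_{w't^k}$) directly. Your appeal to the pinning $\sf{H}_i^t\cap\R_{\geq 0}\w_i=\{x_{t^i}\}$ is the right tool for Proposition~\ref{prop:isolated-point}, where one needs to nail down the \emph{image} $h'(r_i')$, but for the combinatorial membership $r_i'\in P_i$ it is both indirect and secretly relies on the adjacency to the $t^i$-leg that you should just state. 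Once that adjacency is made explicit, both your openness argument and your closedness argument (which, as in the paper, simply observes that conditions (1)--(2) of Construction~\ref{constr:connected-component} are preserved under contraction of finitely many edges in a nearby member of the net) go through.
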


\begin{proof}
  We first prove that $\sf{T}_k^g$ is open. Let $K = [\Gamma , (v_j) , h]\in \sf{T}_k^g$. Let $\varepsilon >0$, let $\ca{W}$ be a finite open cover of $\bbase$ and consider $U = U(K,\varepsilon , \ca{W})\cap \TC_k^{g}$. By definition, if $K' = [\Gamma ' ,(v_j') , h']\in U$ then we have a continuous map $c\colon \Gamma ' \rightarrow \Gamma$ contracting a subset of topological edges, sending $v_j'$ to $v_j$ and nodes to nodes.\\
  Let $P_k = P_{wt^k}\cap P_{w't^k}$ in the domain of $\Gamma'$, and for $i\geq k+1$ let $P_i = P_{wt^i}\cap P_{g^it^i}$ in $\Gamma '$. For $i\geq k$, the root $r_i$ of the $i$-th twig does not deform since it is a $3$-valent vertex. Thus $r_i' = c^{-1} (r_i )$ is still a $3$-valent vertex in $\Gamma '$. Furthermore, as intervals with an infinite $1$-valent vertex do not deform either, two of the edges incident to $r_i'$ are fixed. One of them is the $t^i$-leg and the other is the $i$-th twig.
  In particular $r_i'$ is the root of the $i$-th twig in $K'$, and is also the endpoint of the $t^i$-leg. Thus $r_i'\in P_i$, proving conditions (1) and (2) of Construction \ref{constr:connected-component}.

  We now prove $\sf{T}_k^g$ is closed.
  Let $(K_{\lambda} )_{\lambda\in\Lambda}$ be a net in $\sf{T}_k^g$ converging to \linebreak ${K_{\infty} = [\Gamma_{\infty} , (v_j^{\infty} ) ,h_{\infty}]}$ in $\TC_k^g$. Let $U$ be a basic neighbourhood of $K_{\infty}$ in $\TC_k^g$ as in the proof of openness. By definition, there exists $\lambda_0\in\Lambda$ such that \linebreak${K_{\lambda_0} = [\Gamma_{\lambda_0} , (v_j^{\lambda_0}) , h_{\lambda_0}]\in U}$.
  Thus we have a continuous map $c\colon\Gamma_{\lambda_0}\rightarrow \Gamma$ \linebreak contracting a subset of edges. Conditions (1) and (2) of Construction \ref{constr:connected-component} are still satisfied after contraction of some edges, thus $K_{\infty}\in \sf{T}_k^g$.
\end{proof}

\subsection{Analytic deformation} \label{subsec:analytic-deformation}

We proceed to defining the relevant spaces of \linebreak analytic curves lying above the tropical spaces. To do this, recall the commutative diagram with tropicalization maps
\begin{cd}
  \sm{{M}} (U,\p_k^i ,\beta ) \ar[d,"\Trop^i"] \ar[r,hook] & \overline{{M}} (Y,\p_k^i ,\beta )\ar[r, "\Phi_k^i"] & \overline{{M}}_{0,n_i} \times Y^{I_k^i} \ar[d, "\Trop^i"] \\
  \TC (\base , \p_k^i) \ar[rr, "\mathsf{\Phi}_k^i"] & & \overline{\mathsf{M}}_{0,n_i} \times \overline{M}_{\R}^{I_k^i}
\end{cd}
At the level of domain curves, the tropicalization map corresponds to taking the convex hull of the marked points. For stable maps, the tropicalization map gives a well defined map to $\TC (\base ,\p_k^i )$ on the smooth locus only due to our notion of tropical curves.

\begin{constr}
Given a substack ${M}\subset \overline{{M}} (Y, \p_k^i ,\beta)$ we denote by $\sd{{M}}$ (resp. $\sm{{M}}$) its restriction to $\sd{{M}} (U , \p_k^i ,\beta ) $ (resp. to $\sm{{M}} (U ,\p_k^i ,\beta)$).\linebreak
For $i\in\lrbrace{L,M,N,g}$ and $1\leq k \leq t+1$, define the following substacks :
\begin{enumerate}
  \item $V_k^i \coloneqq \lr{\Trop^i}^{-1} (\sf{V}_k^i)$ in $\overline{{M}}_{0,n_i}\times Y^{I_k^i}$.
  \item $\overline{{M}} (V_k^i ,\beta ) =\lr{\Phi_k^i }^{-1} \lr{V_k^i}$ in $\overline{{M}} (Y,\p_k^i ,\beta)$.
  \item ${M} (T_k^i , \beta )= \lr{\Trop^i}^{-1} (\sf{T}_k^i)\cap\sm{{M}} (U,\p_k^i ,\beta )$.
\end{enumerate}
By construction, we have ${M} (T_k^i  ,\beta )\subset\sm{\overline{{M}} (V_k^i ,\beta )}$. We continue to denote by $\Phi_k^i\colon{M}(T_k^i , \beta )\rightarrow V_k^i$ the restriction.
\end{constr}

We have natural maps induced by forgetting marked points
\[V_k^g \longrightarrow V_k^i ,\; i\in\lrbrace{L,M,N}, \text{ and } V_k^g\longrightarrow V_{k+1}^L .\]
These maps are proper and flat, in particular open, since they are given by forgetting marked points and projections $Y^{I_k^g}\rightarrow Y^{I_k^i}$.

The following proposition expresses that $M(T_k^i ,\beta )$ is not too far from being compact.

\begin{prop}  \label{prop:nets-have-stable-domain}
  Let $(f_{\lambda})_{\lambda\in\Lambda}$ be a net in ${M} (T_k^i ,\beta)$, such that $(\Trop^i (f_{\lambda}))_{\lambda\in\Lambda}$ converges in $\TC_k^i$. Then a subnet of $(f_{\lambda})_{\lambda\in\Lambda}$ converges in $\sd{\overline{{M}} (V_k^i ,\beta)}$.
\end{prop}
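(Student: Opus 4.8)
The plan is to argue by a standard boundedness-plus-properness mechanism, combining the tropical convergence hypothesis with the properness of the Berkovich moduli stack $\overline{M}(Y,\p_k^i,\beta)$ over $\Spf k$. Since $Y$ is proper, the moduli stack $\overline{M}(Y,\p_k^i,\beta)$ is proper, hence its analytification is compact (in the sense that every net admits a convergent subnet, using sequential-type compactness arguments available for Berkovich analytifications of proper stacks). So from the net $(f_\lambda)_{\lambda\in\Lambda}$ in $M(T_k^i,\beta)\subset\overline{M}(V_k^i,\beta)$ one can extract a subnet converging to some stable map $f_\infty\in\overline{M}(Y,\p_k^i,\beta)$. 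The real content is to show two things: first, that $f_\infty$ still lies in $\overline{M}(V_k^i,\beta)$, and second, that $f_\infty$ lies in the substack $\sd{\overline{M}(V_k^i,\beta)}$, i.e.\ has \emph{stable} domain curve.

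First I would handle membership in $\overline{M}(V_k^i,\beta)$. The condition defining $V_k^i$ is cut out by $(\Phi_k^i)^{-1}$ of the closed subset $V_k^i\subset\overline{M}_{0,n_i}\times Y^{I_k^i}$, which in turn is $(\Trop^i)^{-1}$ of the tropical datum $\sf V_k^i$ (a product of a point of tropical $\overline{M}_{0,n_i}$ with fixed points $x_w,x_{g^j}$ and the tropical lines $\sf H_k^g,\sf H_k^t$). Since $\sf V_k^i$ is closed in $\overline{\mathsf M}_{0,n_i}\times\bbase^{I_k^i}$ and tropicalization is continuous, $V_k^i$ is closed in $\overline{M}_{0,n_i}\times Y^{I_k^i}$; hence $\overline{M}(V_k^i,\beta)$ is closed in $\overline{M}(Y,\p_k^i,\beta)$. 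A convergent net in a closed subspace has its limit in that subspace, so $f_\infty\in\overline{M}(V_k^i,\beta)$, and moreover $\Trop^i(f_\infty)\in\sf{TC}_k^i$ lies in the closure of $\sf T_k^i$, which by Proposition \ref{prop:connected-components} (for $i=g$) and Proposition \ref{prop:isolated-point} (for $i\in\{L,M,N\}$) is again $\sf T_k^i$; so the tropical type of $f_\infty$ is still controlled.

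The heart of the matter — and the step I expect to be the main obstacle — is the stability of the domain of $f_\infty$. The danger is a limit in which a component of the domain carrying no marked points and no nodes degenerates, or more precisely that, after passing to the limit, the convex hull of the marked points contains a contracted unstable component; equivalently, that the twig part of the domain (which is unconstrained in $\sf{TC}_k^g$) could in the limit collapse onto, or detach from, the spine in a way that destabilizes the dual graph. This is exactly the phenomenon ruled out in \cite[Proposition 10.1]{keelFrobeniusStructureTheorem2019a}, and here the claim is stated as a ``small generalization.'' The strategy is: use the hypothesis that $\Trop^i(f_\lambda)$ converges in $\sf{TC}_k^i$ to control the combinatorial type of the convex hulls; then, near each vertex of the limiting tropical curve, use the fact that the interior marked points $v_{g^j},v_{t^j},v_w,v_{w'}$ are pinned to fixed points $x_{g^j},x_{t^j},x_w,x_{w'}$ (resp.\ to the divisors $H_k^g,H_k^t$) of $\base$, so the corresponding edges of the tropical domain cannot shrink to zero length in the limit — this is what prevents the spine from collapsing. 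For the twigs: since the tropical curves are required to be \emph{balanced} and have all twig leaves mapping to $\trop E$, each twig leg has nonzero slope pointing toward a fixed direction $\w_j$, so the twig cannot be contracted to a point without violating balancing; and a three-valent twig root cannot appear or disappear in a limit inside $\sf T_k^i$, by the arguments already used in Propositions \ref{prop:isolated-point} and \ref{prop:connected-components}. Putting these local analyses together: every component of the domain curve of $f_\infty$ that lies in the convex hull of the marked points either carries $\geq 3$ special points or is a twig component attached at a node and carrying a leaf marked point, hence stable; so $f_\infty$ has stable domain and $f_\infty\in\sd{\overline{M}(V_k^i,\beta)}$. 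I would write out the $i=g$ case in detail (the richest combinatorics, with the path $\Delta$) and remark that $i\in\{L,M,N\}$ follow by the same argument with the simpler tropical types of Proposition \ref{prop:isolated-point}.
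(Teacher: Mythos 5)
There is a genuine gap in the heart of your argument: the step establishing stability of the domain of $f_\infty$. You reason entirely about the tropicalization $\Trop^i(f_\lambda)$, arguing that edges of the tropical domain cannot shrink and that twigs cannot collapse by balancing. But the tropicalization of a stable map only records the \emph{convex hull of the marked points} in the domain curve; it is blind to irreducible components of the analytic domain $C_\infty$ that carry no marked points. The genuine danger — and the content of Claim 10.5 of \cite[Proposition 10.1]{keelFrobeniusStructureTheorem2019a}, whose proof the paper states \emph{fails} in the present non-transverse-at-infinity situation — is that the limiting stable map $f_\infty\colon C_\infty\to Y$ develops a chain of $\mathbb{P}^1$-bubbles attached to the cap $\D_{i,\infty}$, with each bubble carrying at most two special points. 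Such bubbles contribute nothing to the tropical picture, so your conclusion that ``every component of the domain curve of $f_\infty$ that lies in the convex hull of the marked points \ldots is stable'' simply does not speak to them; it cannot yield stability of the full domain.

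The paper's proof closes this gap by a curve-class argument that you do not have: it traps the limit cap $\D_{i,\infty}$ inside a region $\widetilde V$ obtained by pulling back a compact polyhedral neighbourhood, shows that $\overline{\widetilde V\setminus E}$ is affinoid (hence contains no proper curves) and only meets $D$ along $D_i$, and deduces that any non-contracted bubble $C'$ must have image an irreducible component of the exceptional divisor $E$. It then uses the intersection-number bookkeeping $1=\beta\cdot D_i\geq f_{\infty,*}[C_i]\cdot D_i+\deg f_{\infty\vert C'}\geq 2$ to derive a contradiction, so every putative bubble is contracted — and a contracted bubble with at most two special points would violate stability of $f_\infty$, forcing the bubble tree to be empty. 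This interplay between the analytic geometry of $\widetilde V$, the constraint that bubbles land in $E$, and the primitivity hypothesis (which pins $\beta\cdot D_i=1$) is where the real content lies, and it is absent from your proposal. Your opening steps (properness of $\overline{M}(Y,\p_k^i,\beta)$ to extract a convergent subnet, closedness of $V_k^i$, and control of the limiting tropical type via Propositions \ref{prop:isolated-point} and \ref{prop:connected-components}) are fine and agree with the paper, but the stability-of-domain step needs to be replaced entirely.
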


\begin{proof}
  By properness of $\overline{{M}} (Y,\p_k^i ,\beta )$, up to passing to a subnet we may assume that $(f_{\lambda})$ converges to some $f_{\infty} = [C_{\infty} , (p_{j,\infty})_{j\in J} , C_{\infty}\rightarrow Y]\in\overline{{M}} (Y,\p_k^i ,\beta )$.

  We proceed as in \cite[][Proposition 10.1]{keelFrobeniusStructureTheorem2019a} by cutting the domain curves into body and caps. This decomposition is obtained by choosing a trivial family of closed disks centered at each boundary point. For $\lambda\in\Lambda\cup\lrbrace{\infty}$, denote by $\D_{i,\lambda}$ the (closed) cap associated to the $i$-th boundary marked point of $f_{\lambda}$ and by $\B_{\lambda}$ the corresponding body.

  The proof goes in three steps:
  \begin{enumerate}
    \item The boundaries of caps are mapped to a compact subset inside the torus \cite[Claim 10.3]{keelFrobeniusStructureTheorem2019a}.
    \item $f_{\infty} (\B_{\infty})\cap D = \emptyset$ \cite[Claim 10.4]{keelFrobeniusStructureTheorem2019a}.
    \item The limit caps $\D_{\infty , i}$ do not have bubbles \cite[Claim 10.5]{keelFrobeniusStructureTheorem2019a}.
  \end{enumerate}

  The first two claims carry on to our situation, but the proof of the third claim fails because of the non-transverse at infinity part of the spines we are considering. In the surface case, we can still prove that $\D_{i,\lambda}$ has no bubbles. Let $i\in B$ correspond to a non transverse at infinity boundary point. For $\lambda\in\Lambda\cup\lrbrace{\infty}$, let $v_{i,\lambda}$ be the image of the $i$-th marked point on the domain curve of the tropicalization, and let $b_{i,\lambda}$ denote the image of $\partial\D_{i,\lambda}$. Let $V$ be a compact polyhedral subset containing $h_{\lambda} (\intervalleff{b_{i, \lambda}}{v_{i,\lambda }})$ for all $\lambda\in\Lambda$, so that it also contains $h_{\infty} (\intervalleff{b_{i,\infty}}{v_{i,\infty}})$.
  Let $\widetilde{V} = (\pi\circ\tau_t)^{-1} (V)$.
  We can shrink $V$ so that $\overline{\widetilde{V}\setminus E}$ is affinoid and only meets the irreducible component $D_i$ of $D$. In addition to this affinoid domain, $\widetilde{V}$ contains a union of irreducible components of the exceptional divisor $E$.
  As $f_{\lambda} (\D_{i,\lambda })\subset\widetilde{V}$ for all $\lambda\neq\infty$, this inclusion also holds for $\lambda= \infty$ by compactness of $\widetilde{V}$ and continuity of the universal stable map.

  Let $C_i$ denote the unique irreducible component in $C_{\infty}$ intersecting both $\B_{\infty}$ and $\D_{i,\infty}$, which satisfies $f_{\infty, \ast} [C_i]\cdot D_i >0$. Let $C_b$ be a connected component of $\overline{\D_{i,\infty}\setminus C_i}$, it is a tree of $\mathbb{P}^1$. We note that each irreducible component $C'\subset C_b$ not contracted by $f_{\infty}$ has image equal to an irreducible component of $E$,
  thus contributes by $f_{\infty,\ast} [C']\cdot D = \deg f_{\infty\vert C'} >0$ to the intersection number $\beta\cdot D_i$. If $C'\subset C_b$ is such a component, then we have
  \[1= \beta\cdot D_i \geq f_{\infty,\ast} [C_i]\cdot D_i + \deg f_{\infty\vert C'} \geq 2  .\]
  This is a contradiction, so every component in $C_b$ is contracted to a point. In turn, this contradicts the stability of $f_{\infty}$ so we must have $C_b = \emptyset$. Then $f_{\infty}$ has stable domain.

\end{proof}

We will use the next lemma to reduce enumerative computations to the smooth part of the moduli space. Defining the invariants in this way allows to interprete simple invariants as naive counts. The idea is that if families in a subspace $\sm{M} (U,\p ,\beta )$ degenerate to at worst stable maps with stable domains, then we can obtain a closed subspace in $\sm{M} (U,\p ,\beta)$ by removing stable maps with domain curves arising as degenerations.

\begin{lemma} \label{lemma:restr-proper-connected-components}
  Let $V\subset \overline{{M}}_{0,n}\times Y^I$, let ${M}_V = \Phi^{-1} (V)\subset \overline{{M}} (Y,\p ,\beta )$, where $\Phi = (\st, \ev_I)$.
  Let ${M} '\subset {M}_V$, and assume that:
  \begin{itemize}
    \item ${M}'$ is a union of connected components in $\sm{{M}}_V$.
    \item ${M}'$ has Zariski closure in ${M}_V$ contained in $\sd{{M}}_V$.
  \end{itemize}
  Then there exists a Zariski open $W\subset V$, such that ${M}_W\coloneqq \Phi^{-1} (W)$ satisfies:
  \begin{enumerate}
    \item ${M}_W'\coloneqq {M}'\cap {M}_W$ is a union of connected components in ${M}_W$. \linebreak In particular ${M}_W'\subset\sm{{M}}_V$.
    \item The restriction $\Phi\colon {M}_{W}'\rightarrow W$ is proper.
    \item $W$ intersects every fiber of the first projection.
  \end{enumerate}
\end{lemma}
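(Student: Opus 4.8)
The plan is to produce the Zariski open $W\subset V$ by deleting from $V$ the (Zariski-closed) "bad locus" that obstructs both conclusions (1) and (2). First I would identify this bad locus. Let $\overline{M'}$ denote the Zariski closure of $M'$ in $M_V$; by hypothesis $\overline{M'}\subset\sd{M}_V$, and $\overline{M'}\setminus M'$ is a Zariski-closed subset of $M_V$ consisting of stable maps with stable domain but not lying in $M'$. Set $Z \coloneqq \Phi\bigl(\overline{M'}\setminus M'\bigr)$. Since $\Phi$ factors through the proper map $\overline{M}(Y,\p,\beta)\to \overline{M}_{0,n}\times Y^I$ and $\overline{M'}\setminus M'$ is closed in the proper stack $\overline{M}(Y,\p,\beta)$ (being closed in $M_V$, hence in $\overline{M}(Y,\p,\beta)$ as $\overline{M'}$ is already closed there — one checks $M'$ being a union of connected components of the Zariski-open $\sm{M}_V\subset M_V$ forces its closure in $M_V$ to be closed in $\overline{M}(Y,\p,\beta)$), the image $Z$ is Zariski-closed in $V$. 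The candidate is then $W \coloneqq V\setminus Z$.

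Next I would verify the three conclusions. For (1): $M'_W = M'\cap\Phi^{-1}(W)$. I claim $M'_W$ is both open and closed in $M_W=\Phi^{-1}(W)$. It is open because $M'$ is open in $\sm{M}_V$, which is Zariski-open in $M_V\supset M_W$, so $M'\cap M_W$ is open in $M_W$. It is closed because its closure in $M_W$ is contained in $\overline{M'}\cap M_W$; any point of $\overline{M'}\cap\Phi^{-1}(W)$ that is not in $M'$ lies in $\overline{M'}\setminus M'$, hence maps into $Z$, contradicting that it lies over $W=V\setminus Z$. So $\overline{M'}\cap M_W = M'_W$, giving closedness; combined with openness, $M'_W$ is a union of connected components of $M_W$, and in particular $M'_W\subset\sm{M}_V$ as asserted. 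For (2): $\Phi\colon M'_W\to W$ is the restriction of the proper map $\overline{M}(Y,\p,\beta)\to\overline{M}_{0,n}\times Y^I$ to the closed subspace $M'_W = \overline{M'}\cap\Phi^{-1}(W)$ of $\Phi^{-1}(W)$ (using the closedness just established) mapping to $W$; a closed immersion followed by a proper map is proper, and base change of a proper map is proper, so $\Phi\colon M'_W\to W$ is proper. For (3): I need $W$ to meet every fiber of the first projection $\mathrm{pr}_1\colon V\to\overline{M}_{0,n}$; equivalently, $Z$ contains no whole fiber of $\mathrm{pr}_1|_V$. This is where I expect to use that $\sm{M}_V$ is dense in the relevant components and that the generic domain in a fiber is not among the finitely many degenerate domains arising in $\overline{M'}\setminus M'$ — concretely, $\mathrm{pr}_1(Z)$ is a proper Zariski-closed subset of the (irreducible, or at least equidimensional) base $\overline{M}_{0,n}$, because the domains appearing in $\overline{M'}\setminus M'$ are strictly more degenerate than the generic stable domain parametrized by $M'$, so they sit over a proper closed substack of $\overline{M}_{0,n}$; hence $W$ surjects onto $\overline{M}_{0,n}$ and meets every fiber.

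The main obstacle is conclusion (3): one must genuinely control \emph{where} in $\overline{M}_{0,n}$ the bad domains concentrate, rather than merely knowing $Z$ is closed. I would handle this by a dimension count — $M'$ has dimension equal to the virtual dimension (Lemma \ref{lemma:underived}) and is smooth over $\overline{M}_{0,n}$, so its generic fiber over $\overline{M}_{0,n}$ is non-empty and consists of maps with smooth-enough (in particular, the prescribed combinatorial type of) domain, whereas points of $\overline{M'}\setminus M'$ necessarily acquire extra nodes or contracted components, pinning them over the boundary strata of $\overline{M}_{0,n}$ of strictly smaller dimension. Thus $\mathrm{pr}_1(Z)\subsetneq\overline{M}_{0,n}$, and intersecting the open complement back gives the desired $W$ meeting every fiber. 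A secondary technical point to be careful about is the passage between the analytic and algebraic settings: since all the moduli stacks in play are analytifications of algebraic ones (as noted after Lemma \ref{lemma:underived}), I would phrase the Zariski-closure and properness arguments algebraically and analytify at the end, using GAGA-type comparison and the fact that analytification preserves properness and open/closed immersions.
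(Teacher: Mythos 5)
Your construction of $W$ and your verification of conclusions (1) and (2) match the paper's proof. (Note that your bad locus $\overline{M'}\setminus M'$ coincides with the paper's $\overline{M'}\setminus \sm{M}_V$: since $M'$ is a union of connected components of $\sm{M}_V$, it is closed in $\sm{M}_V$, so $\overline{M'}\cap\sm{M}_V = M'$.) The open/closed argument for (1) and the base-change argument for (2) are the intended ones.

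However, your argument for conclusion (3) has a genuine gap, for two independent reasons. First, the logic does not close: showing $\mathrm{pr}_1(Z)\subsetneq\overline{M}_{0,n}$ is strictly weaker than the required statement that $Z$ contains no whole fiber of $\mathrm{pr}_1\vert_V$. For a point $\mu\in\mathrm{pr}_1(Z)$ you have no information about whether $\mathrm{pr}_1^{-1}(\mu)\subset Z$. Second, and more seriously, the geometric claim underlying the dimension count is false. The hypothesis $\overline{M'}\subset\sd{M}_V$ says precisely that points of $\overline{M'}\setminus M'$ have \emph{stable} domain; they fail to lie in $M'$ because they violate one of the three conditions defining $\sm{M}(U,\p,\beta)$ (triviality of $f^\ast T_Y(-\log D)$, avoiding $D\cap E$, or finiteness/transversality along $E$), which are conditions on the \emph{map}, not on the combinatorial type of the domain. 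Such degenerations can occur over any $\mu\in\overline{M}_{0,n}$, not just over boundary strata, so there is no a priori reason for $\mathrm{pr}_1(Z)$ to avoid the generic domain. The paper instead proves (3) by citing \cite[Lemma~3.12]{keelFrobeniusStructureTheorem2019a}, which asserts that the fibers of the stabilization map over a fixed domain curve have dense image under the evaluation maps; this density statement, applied fiberwise over each $\mu$, is what prevents $Z$ from covering a whole fiber. Your proof as written needs to be replaced by (or reduced to) this evaluation-density input.
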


\begin{proof}
  Let $W = V\setminus \Phi (\overline{{M}'}\setminus \sm{{M}_V})$, it is Zariski open by properness of $\Phi$. Let ${M}_W\coloneqq \Phi^{-1} (W)$.

  By construction $\overline{{M}'}\cap {M}_W \subset \sm{{M}_{V}}$, so the Zariski closure of ${M}_W'$ in ${M}_{W}$ lies in $\sm{{M}_{W}}$.
  Since ${M}'\subset \sm{{M}_V}$ is a union of connected components, ${M}_{W}' $ is a union of connected components in $\sm{{M}_{W}}$. In particular it equals its Zariski closure in ${M}_W$, thus ${M}_{W}'\subset {M}_W$ is a union of connected components proving (1).

  Since $\Phi\colon {M}_V\rightarrow V$ is proper, we deduce the properness of the restriction of $\Phi$ to ${M}_{W}'$ by base change and restriction to a union of connected components, proving (2).

  (3) follows from \cite[][Lemma 3.12]{keelFrobeniusStructureTheorem2019a}, the lemma states that fibers over a fixed domain curve have dense images under evaluation maps.
\end{proof}

We apply the previous lemma to the subspaces ${M} (T_k^i , \beta )$. For the degeneration argument to work, we impose a further compatibility condition on our choice of Zariski open susets.

\begin{prop} \label{prop:properness-restriction}
  There exist Zariski opens $W_k^i\subset V_k^i$ such that:
  \begin{enumerate}
    \item ${M} (T_k^i , \beta )_{W_k^i}$ is contained in $\sm{\overline{{M}} (V_k^i ,\beta )}$.
    \item The restriction $\Phi_k^i\colon {M} (T_k^i,\beta )_{W_k^i}\rightarrow W_k^i$ is proper.
    \item $W_k^i$ intersects every fiber of the first projection map.
    \item The forgetful maps $W_k^g\rightarrow W_k^i$ and $W_k^g\rightarrow W_{k+1}^L$ are surjective.
  \end{enumerate}

\end{prop}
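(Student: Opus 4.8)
The plan is to deduce the Proposition from Lemma~\ref{lemma:restr-proper-connected-components}, applied to each inclusion $M(T_k^i,\beta)\subset\overline{M}(V_k^i,\beta)$ with $\Phi=\Phi_k^i$, which yields opens satisfying conditions (1)--(3), and then to shrink the resulting opens so that the forgetful maps become surjective, which is (4). The first hypothesis of Lemma~\ref{lemma:restr-proper-connected-components} --- that $M(T_k^i,\beta)$ be a union of connected components of $\sm{\overline{M}(V_k^i,\beta)}$ --- is immediate from Propositions~\ref{prop:isolated-point} and~\ref{prop:connected-components}: for $i\in\{L,M,N\}$ the subset $\sf{T}_k^i\subset\sf{TC}_k^i$ is a single isolated point, and for $i=g$ it is a union of connected components, so in all cases $\sf{T}_k^i$ is open and closed; since the tropicalization map $\Trop^i\colon\sm{\overline{M}(V_k^i,\beta)}\to\sf{TC}_k^i$ is continuous for the topology of Construction~\ref{constr:topology-trop-curves}, the preimage $M(T_k^i,\beta)=(\Trop^i)^{-1}(\sf{T}_k^i)$ is open and closed in $\sm{\overline{M}(V_k^i,\beta)}$, hence (being a union of connected components of a variety) Zariski open and closed in $\sm{\overline{M}(V_k^i,\beta)}$ and in particular Zariski open in $\overline{M}(V_k^i,\beta)$.

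The real content is the second hypothesis: that the Zariski closure of $M(T_k^i,\beta)$ in $\overline{M}(V_k^i,\beta)$ is contained in the stable-domain locus $\sd{\overline{M}(V_k^i,\beta)}$. Here I would use Proposition~\ref{prop:nets-have-stable-domain}. Since $M(T_k^i,\beta)$ is Zariski open in $\overline{M}(V_k^i,\beta)$, it is dense in its Zariski closure, which therefore analytifies to the analytic closure of $M(T_k^i,\beta)$; so any point $x$ of this closure is the limit of a net $(f_\lambda)$ in $M(T_k^i,\beta)$. To apply Proposition~\ref{prop:nets-have-stable-domain} one needs $(\Trop^i(f_\lambda))$ to admit a subnet convergent in $\sf{TC}_k^i$: for $i\in\{L,M,N\}$ the net is constant, equal to $L_k$, $M_k$, or $N_k$; for $i=g$ the net lies in $\sf{T}_k^g$, where the constraints defining $\sf{V}_k^g$ pin every interior marked point to a fixed point of $\base$ --- each line $\sf{H}_j^g$, $\sf{H}_j^t$ meeting the twig direction $\R_{\geq 0}\w_j$ transversally, by the mixed-volume normalization of Construction~\ref{constr:trop-divisor} --- and confine the domain curve to the compact path $\Delta$, so that $\sf{T}_k^g$ is relatively compact in $\sf{TC}_k^g$ and a convergent subnet exists. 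Proposition~\ref{prop:nets-have-stable-domain} then gives a further subnet converging to a point of $\sd{\overline{M}(V_k^i,\beta)}$, and since $\overline{M}(V_k^i,\beta)$ is separated this point must be $x$. This completes the verification of the hypotheses, and Lemma~\ref{lemma:restr-proper-connected-components} yields Zariski opens $\widetilde{W}_k^i\subset V_k^i$ satisfying (1)--(3); by that lemma's proof, $B_k^i:=V_k^i\setminus\widetilde{W}_k^i=\Phi_k^i\big(\overline{M(T_k^i,\beta)}\setminus\sm{\overline{M}(V_k^i,\beta)}\big)$ is Zariski closed and, as $M(T_k^i,\beta)$ is dense in its closure and lies in the smooth locus, of positive codimension in $V_k^i$.

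For the compatibility (4), recall that the forgetful maps $V_k^g\to V_k^i$ ($i\in\{L,M,N\}$) and $V_k^g\to V_{k+1}^L$ are proper, flat, and surjective, hence in particular open. Starting from the opens $\widetilde{W}_k^i$, I would shrink them simultaneously over all $k$ to a compatible system $W_k^i$: each $W_k^g$ is taken inside $\widetilde{W}_k^g$ and inside the preimages under the forgetful maps of the opens chosen for the $L$-, $M$-, $N$-targets, and each $W_k^i$ with $i\neq g$ is then defined so that $W_k^g$ surjects onto it --- possible because the forgetful maps are open and surjective. Conditions (1) and (2) are inherited since the $W_k^i$ sit inside the $\widetilde{W}_k^i$; the delicate point is condition (3), that the shrunk opens still meet every fibre of the first projection: as in the proof of Lemma~\ref{lemma:restr-proper-connected-components} this rests on \cite[][Lemma 3.12]{keelFrobeniusStructureTheorem2019a} (fibres over a fixed domain curve have Zariski-dense image under the evaluation maps), and it forces the shrinking to remove only closed subsets that do not dominate the moduli of domain curves, which is what constrains how the $W_k^i$ may be chosen and why they must be built coherently across all $k$ at once.

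I expect the principal obstacle to be the second hypothesis: translating the statement of Proposition~\ref{prop:nets-have-stable-domain} (about analytic nets whose tropicalizations converge) into the required Zariski-closure statement, and especially the compactness input guaranteeing convergence of the tropicalizations $\Trop^g(f_\lambda)$ --- which depends on the constraints in $\sf{V}_k^g$ rigidifying the tropical curves enough to make $\sf{T}_k^g$ relatively compact. The compatibility (4) is more of a bookkeeping task, but it is genuinely global in $k$ (each $W_j^L$ is a forgetful-target of both $V_j^g$ and $V_{j-1}^g$), and it has to be arranged without breaking condition (3).
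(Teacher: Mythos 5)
Your proposal is correct and follows essentially the same route as the paper: apply Lemma~\ref{lemma:restr-proper-connected-components} (whose two hypotheses are supplied by Propositions~\ref{prop:isolated-point}, \ref{prop:connected-components} and \ref{prop:nets-have-stable-domain}) to obtain preliminary opens $\widetilde{W}_k^i$, then intersect $\widetilde{W}_k^g$ with the preimages under the forgetful maps and take images to produce a compatible system. You supply more detail than the paper at two points the paper leaves implicit --- the openness/compactness argument making $\Trop^i(f_\lambda)$ converge so that Proposition~\ref{prop:nets-have-stable-domain} applies, and the bookkeeping that the shrinking must be arranged coherently in $k$ while preserving condition~(3) --- but the underlying strategy is identical.
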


\begin{proof}
By Propositions \ref{prop:connected-components} and \ref{prop:nets-have-stable-domain},
we can apply Lemma \ref{lemma:restr-proper-connected-components} with $V = V_k^i$ and ${M}' = {M} (T_k^i , \beta )$. This gives Zariski opens $\widetilde{W}_k^g$, $\widetilde{W}_k^i$ and $\widetilde{W}_{k+1}^L$ that satisfy (1), (2) and (3).

We then let
\[W_k^g = \widetilde{W}_k^g \cap \Fgt^{-1}\widetilde{W}_k^L \cap\Fgt^{-1}\widetilde{W}_k^M \cap \Fgt^{-1}\widetilde{W}_k^N \cap \Fgt^{-1}\widetilde{W}_{k+1}^L .\]
This is an open subset, and noting that the forgetful maps are proper and flat thus open, we define the open subsets
\[W_k^i = \Fgt (W_k^g)\subset \widetilde{W}_k^i \text{ and } W_{k+1}^L = \Fgt (W_k^g) \subset \widetilde{W}_{k+1}^L .\]
Conditions (1) and (2) holds by pullback, condition (3) holds by \cite[][Lemma 3.12]{keelFrobeniusStructureTheorem2019a} and (4) holds by construction.
\end{proof}
We denote by ${M} (T_k^i , \beta )_W$ the fiber product ${M} (T_k^i , \beta )\times_{V_k^i} W_k^i$. The following corollary is precisely what we need to define deformation invariant counts in the next subsection.

\begin{coro} \label{coro:proper-and-flat}
  Let:
  \begin{enumerate}
    \item $\Fgt_k^L\colon \overline{{M}}_{0,2t+3}\times Y^{2t-k+2}\rightarrow Y^{2t-k+2}$ denote the map forgetting every marked point except $w,g^k,t^k$.
    \item $\Fgt_k^M\colon \overline{{M}}_{0,5}\times Y^2\rightarrow Y^2$ denote the map forgetting every marked point except $w',g',t'$.
    \item $\Fgt_k^N\colon \overline{{M}}_{0,5}\times Y^{3}\rightarrow Y^{3}$ denote the map forgetting every marked point except $w', g', t'$.
    \item $\Fgt_k^g\colon \overline{{M}}_{0,2t+7}\times Y^{2t-k+3}\rightarrow \overline{\ca{M}}_{0,5}\times Y^{2t-k+3}$ denote the map forgetting every marked point except $w,w',g^k,t^k,t'$.
  \end{enumerate}
  The composition $\Psi_k^i \coloneqq \Fgt_k^i \circ \Phi_k^i$ restricted to ${M} (T_k^i ,\beta )_{W_k^i}$ is proper and flat.
\end{coro}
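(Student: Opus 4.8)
The plan is to derive Corollary~\ref{coro:proper-and-flat} directly from Proposition~\ref{prop:properness-restriction} together with standard properties of forgetful maps on moduli of stable maps, using the fact that ``proper and flat'' is stable under composition and base change. First I would recall that each $\Phi_k^i\colon M(T_k^i,\beta)_{W_k^i}\to W_k^i$ is proper by Proposition~\ref{prop:properness-restriction}(2), and that by Lemma~\ref{lemma:underived} the source $M(T_k^i,\beta)_{W_k^i}$, being an open substack of the smooth locus $\sm{M}(U,\p_k^i,\beta)$ which is smooth over $\overline{M}_{0,n_i}$, is itself smooth over $\overline{M}_{0,n_i}$; hence $\st$ restricted to this substack is flat. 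So the only extra input needed is that the forgetful maps $\Fgt_k^i$ on the target spaces $\overline{M}_{0,n_i}\times Y^{I_k^i}$ are proper and flat: on the $Y$-factors this is just a product projection $Y^{I_k^i}\to Y^{\{w,g^k,t^k\}}$ (or the analogous subset), which is proper since $Y$ is proper and flat since $Y$ is smooth; on the $\overline{M}_{0,n}$-factor it is the classical forgetful morphism between moduli of genus-$0$ pointed stable curves, which is proper and flat (it is, up to the universal curve, a $\mathbb{P}^1$-bundle composed with stabilization). Composing, $\Fgt_k^i\circ\Phi_k^i$ is proper and flat on $M(T_k^i,\beta)_{W_k^i}$.

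Let me be slightly more careful about flatness, since that is where a subtlety could hide. The map $\Psi_k^i = \Fgt_k^i\circ\Phi_k^i$ factors as
\[
M(T_k^i,\beta)_{W_k^i}\xrightarrow{\ (\st,\ev_{I_k^i})\ }\overline{M}_{0,n_i}\times Y^{I_k^i}\xrightarrow{\ \Fgt_k^i\ }\overline{M}_{0,5}\times Y^{J'}
\]
(with $J'$ the relevant small index set, or just $Y^{J'}$ in the $L,M,N$ cases). Flatness of the composite is not automatic from flatness of each factor unless we are careful, so instead I would argue that $\Psi_k^i$ is a map from a smooth (hence, in particular, Cohen--Macaulay) analytic stack to a smooth analytic stack whose fibers all have the expected dimension: the target has dimension equal to that of $\overline{M}_{0,5}\times Y^{J'}$, the source has dimension equal to the virtual (= actual, by Lemma~\ref{lemma:underived}) dimension of $\sm{M}(U,\p_k^i,\beta)$, and the tropical conditions imposed in $W_k^i$ were chosen precisely so that the fiber dimension is constant. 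By the local criterion for flatness (``miracle flatness'': a morphism from a Cohen--Macaulay scheme to a regular scheme with equidimensional fibers of the expected dimension is flat), $\Psi_k^i$ is flat. Properness is the easy part: it is the composition of the proper map $\Phi_k^i$ (Proposition~\ref{prop:properness-restriction}(2)) with the proper map $\Fgt_k^i$ (product projection from a proper $Y$, resp.\ forgetful map of stable curves).

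The main obstacle I anticipate is verifying that the fibers of $\Psi_k^i$ really do have the expected, constant dimension --- equivalently, that no excess-dimensional fiber components appear after forgetting all but the marked points $w,w',g^k,t^k,t'$. This is exactly the content of the tropical rigidity built into the definition of $W_k^i$: Proposition~\ref{prop:isolated-point} (for $i\in\{L,M,N\}$) and Proposition~\ref{prop:connected-components} together with the ``fibers over a fixed domain have dense image'' statement \cite[Lemma~3.12]{keelFrobeniusStructureTheorem2019a} pin down, over a generic point of the target, a finite fiber inside the tropical stratum $\sf{T}_k^i$, and the passage to the Zariski-open $W_k^i$ in Proposition~\ref{prop:properness-restriction} removes precisely the loci where the fiber jumps. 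I would phrase the argument so that flatness is deduced from the combination of (a) smoothness of the source (Lemma~\ref{lemma:underived}), (b) regularity of the target, and (c) equidimensionality of fibers guaranteed by the choice of $W_k^i$ and the forgetful map only remembering enough marked points to rigidify via the tropical conditions; everything else is bookkeeping about products and forgetful maps of pointed curves.
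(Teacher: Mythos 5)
Your properness argument matches the paper's, but your flatness argument contains a misconception and takes an unnecessary detour. You write that ``flatness of the composite is not automatic from flatness of each factor,'' but in fact flatness \emph{is} stable under composition: if $A\to B$ and $B\to C$ are flat ring maps then $A\to C$ is flat, because tensoring is exact in two stages. So the whole ``miracle flatness'' machinery --- and the ``main obstacle'' you worry about, verifying equidimensionality of the fibers of $\Psi_k^i$ --- is not needed at all, and it is a good thing, because your sketch of that equidimensionality step is precisely the piece you never actually pin down.

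The paper's proof is also cleaner on the source side: it invokes \cite[Lemma~3.6]{keelFrobeniusStructureTheorem2019a} to say that the full map $\Phi_k^i=(\st,\ev_{I_k^i})$ is \emph{smooth} on the restricted moduli space $M(T_k^i,\beta)_{W_k^i}$, not merely that the stabilization map $\st$ is flat. You only establish flatness of $\st$; flatness of $\Phi_k^i$ itself (not just its first component) is what you need before composing with $\Fgt_k^i$, and that is exactly what the cited lemma provides. Once you have $\Phi_k^i$ proper and smooth (hence flat), and $\Fgt_k^i$ proper and flat (forgetful map of stable curves, i.e.\ iterated universal family, times a projection from powers of the proper smooth $Y$), the corollary follows by composition, with no local criterion needed.
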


\begin{proof}
  By Proposition \ref{prop:properness-restriction}, the map $\Phi_k^i$ is proper. It is also smooth by \cite[][Lemma 3.6]{keelFrobeniusStructureTheorem2019a}. Forgetful maps between moduli space of stable curves correspond to universal families. Thus they are proper and flat, so $\Fgt_k^i$ is proper and flat. We deduce that the composition $\Psi_k^i$ is proper and flat.
\end{proof}

\subsection{Enumerative invariants and degeneration} \label{subsec:degeneration}

We now define enumerative invariants associated to the various spaces constucted. The definitions circumvents fundamental class because we managed to restrict to the smooth locus of the moduli spaces.

Recall that the dimension of the moduli space $\sm{{M}} (U ,\p ,\beta)$ for full-tangency $n$-pointed stable maps is $n-3 + \dim Y = n-1$ for the surface case. We denote by $q^i\colon {M} (T_k^i ,\beta )\rightarrow \pt$ the structure morphism, which is proper.

Given a (derived) $k$-analytic space over a point $q\colon X\rightarrow \pt$, the \emph{motivic \linebreak cohomology groups} $H^{2r} (X,\Q (r ))$ are defined as the Borel-Moore homology of the identity morphism ${\id_X\colon X\rightarrow X}$. Throughout, when $q$ is proper derived lci and \linebreak $\gamma\in H^{\ast} (X ,\Q (\ast ) )$ we use the virtual fundamental class $[X]$ to define
\[\int_{X} \gamma \coloneqq q_{\ast} (\gamma\cap [X])\in\Q .\]

Denote by $\pt\in H^{4} (Y, \Q (2) )$ and by $\pt_{\mu}\in H^{2(n-3)} (\overline{{M}}_{0,n} ,\Q (n-3))$ point classes, and let ${\delta_s^{\ell} = [H_s^{\ell}]\in H^{2} (Y,\Q (1)) }$ for $1\leq s\leq t$ and $\ell\in \lrbrace{g,t}$.

\begin{dfn} \label{def:glued-invariant}
  Let $\beta\in \NE (Y)$, we define:
  \begin{enumerate}
    \item $N (T_k^g  ,\beta ) \coloneqq \int_{{M} (T_k^g , \beta)_{W}} \lr{\Psi_k^{g}}^{\ast} \lr{\pt_{\mu} \extprod_{I_k^g\setminus\lrbrace{g^s,t^s}_{s\geq k}} \pt
      \extprod_{k\leq s \leq t} (\delta_s^{g}\extprod \delta_s^t)}$.
    \item $N(L_k ,\beta ) = \int_{{M} (T_k^L, \beta )_{W}} \lr{\Psi_k^L}^{\ast}\lr{\extprod_{i\in I_k^l\setminus \lrbrace{g^s, t^s}_{s\geq k}} \pt \extprod_{k\leq s\leq t} (\delta_s^g\extprod \delta_s^t)}$.
    \item $N(M_k ,\beta ) = \int_{{M} (T_k^M, \beta )_W} \lr{\Psi_k^M}^{\ast} (\pt\extprod\pt)$.
    \item $N(N_k ,\beta ) = \int_{{M} (T_k^N, \beta )_W} \lr{\Psi_k^N}^{\ast} (\pt\extprod\delta_k^g\extprod\delta_k^t)$.
  \end{enumerate}
\end{dfn}

We relate these invariants by computing $N(T_k^g , \beta)$ at different degenerations of the domain curve (Figure \ref{fig:degeneration-domain-curve}). To characterize these degenerations, we only need to remember the shape of the domain curve. This is why we introduced the forgetful maps and defined our invariants through the maps $\Psi_k^i$.

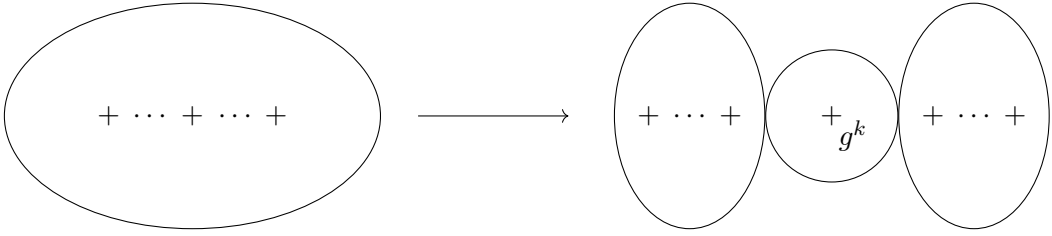
\begin{figure}[h!]
  \begin{tikzpicture}\centering

\begin{scope}
\node[draw, ellipse, minimum height = 3cm, minimum width = 5cm] (A) at (0,0) {$+\;\cdots \; + \; \cdots \; +$};

\draw[->] (3,0) -- (5,0);
\end{scope}

\begin{scope}[xshift = 8.5cm]

\node[draw, ellipse, text=white, minimum height = 3cm, minimum width = 2cm] (B) at (-1.89,0) {};
\node[draw,circle, minimum height= 50pt] (C) at (0,0) {} ;
\node[draw, ellipse, text=white, minimum height = 3cm, minimum width = 2cm] (D) at (1.89,0) {};

\node (glue) at (0,0) {$+$};
\node (gk) at (glue.south east) {$g^k$};

\node (leftdegeneration) at (B.center) {$+\;\cdots \; +$};
\node (rightdegeneration) at (D.center) {$+\; \cdots \; +$};

\end{scope}

\end{tikzpicture}
  \caption{Degeneration of the domain curve. The middle component is contracted to a point in $H_k^g$.}
  \label{fig:degeneration-domain-curve}
\end{figure}

We will need to keep track of the extension curve classes, so we introduce the following notations:
\begin{itemize}
  \item $\widehat{\delta}_V$ is the extension curve class corresponding to the extension from $V$\linebreak to $\widehat{V}$.
  \item Let $V_k$ be the infinitesimal cylinder of twig type $w_k$ of Construction \ref{constr:elementary-cylinders}, whose extension is $N_k$. We denote by $\widehat{\delta}_k$ the extension curve class.\\
  By construction, if we truncate the $1'$ and $2'$ legs of $M_k$ to finite legs whose image intersect at most one wall, then the associated extension curve class is precisely $\widehat{\delta}_k$.
  \item Consider the infinitesimal spine obtained by truncating every boundary leg of $L_{t+1}$ to a finite leg whose image intersects at most one wall. The corresponding extension curve class is $\widehat{\delta}_V + \sum_{s=1}^t {\delta}_s$, where ${\delta}_s$ is associated to the $t^s$-leg.
\end{itemize}

The next lemma computes the toric counts that appear in the inductive formula of Proposition \ref{prop:splitting-formula}.

\begin{lemma}\label{lemma:compute-M}
  For $1\leq k\leq t$ and $\beta\in \NE (Y)$, we have
  \[N(M_k , \beta ) = \left\lbrace\begin{matrix} 1 & \text{if} & \beta = \widehat{\delta}_k ,\\
  0 & \text{else} .&
  \end{matrix}\right. \]
\end{lemma}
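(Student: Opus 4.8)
The plan is to read off $N(M_k,\beta)$ from Definition \ref{def:glued-invariant} as the degree of a proper flat map over a general point, to pin down the unique curve class that can contribute, and then to check that this degree equals $1$ by an explicit toric computation. For the first part I would unwind the definition: by Corollary \ref{coro:proper-and-flat} the composition $\Psi_k^M=\Fgt_k^M\circ\Phi_k^M$ restricted to ${M}(T_k^M,\beta)_W$ is proper and flat, and by Lemma \ref{lemma:underived} together with \cite[Lemma 3.6]{keelFrobeniusStructureTheorem2019a} its source is smooth. Since $\overline{{M}}_{0,3}$ is a point, $\Psi_k^M$ has target $Y^2$ and $\pt\extprod\pt$ is the pullback of the point class under $(\ev_{w'},\ev_{g'})$; hence $N(M_k,\beta)$ is the number, counted without multiplicity, of stable maps in ${M}(T_k^M,\beta)_W$ sending the marked points $p_{w'}$ and $p_{g'}$ to two fixed points of $Y$, which by Proposition \ref{prop:properness-restriction}(3) may be taken generic.

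\emph{Determination of the curve class.} Every stable map in ${M}(T_k^M,\beta)$ tropicalizes to $M_k$, and $M_k$ carries no twig: its only legs reaching $\partial\bbase$ are the boundary legs $1',2',t'$, none of which maps into $\Etrop$. By the dictionary between twigs and intersections with $\Etrop$ already used in the proof of the lemma preceding Corollary \ref{coro:reduction-elementary-cylinder}, such a stable map satisfies $f(C)\cap E=\emptyset$, so $\beta\cdot E_{ij}=0$ for all $i,j$ and $f$ factors through $Y\setminus E\cong Y_t\setminus\{x_{ij}\}$. Combined with the compatibility of $\beta$ with $\p_k^M$ \cite[Remark 3.5]{keelFrobeniusStructureTheorem2019a}, which prescribes every intersection number $\beta\cdot D_i$, and with the decomposition $N_1(Y)\simeq N_1(Y_t)\oplus\Z^{E}$ together with the fact that a curve class on the toric surface $Y_t$ is determined by its intersection numbers with the toric boundary, the class $\beta$ is forced; by the computation of the extension curve class of $M_k$ recorded just before the statement it equals $\widehat\delta_k$. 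Hence $N(M_k,\beta)=0$ unless $\beta=\widehat\delta_k$.

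\emph{The toric count.} For $\beta=\widehat\delta_k$ the contributing stable maps are rational maps to the toric surface $Y_t$ meeting $\ca D_t$ at $p_{1'},p_{2'},p_{t'}$ with the contact data imposed by $\p_k^M$ and with tropicalization the straight cylinder $M_k$, which meets exactly one wall at its single bending vertex. I would compute in coordinates on $T_M=\G_m^2$: after rigidifying the three boundary marked points on $\mathbb{P}^1$ by $\mathrm{PGL}_2$, such a map is given by a pair of monomials in the parameter with prescribed zeros and poles, hence by a pair of multiplicative constants, and imposing that the tropicalization be $M_k$ (rather than a tropical curve with extra bending) cuts out precisely this two-dimensional family — a single $T_M$-orbit of image curves. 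The two conditions $f(p_{w'})=P$ and $f(p_{g'})=Q$ then form a square system in which the first rigidifies the image curve and the second its parametrization, leaving a unique solution, which is reduced by the smoothness above; so $N(M_k,\widehat\delta_k)=1$.

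\emph{Main obstacle.} The delicate step is the last one: one must show that imposing $\Trop=M_k$ discards any spurious algebraic solution, i.e. that the evaluation map $(\ev_{w'},\ev_{g'})$ on ${M}(T_k^M,\widehat\delta_k)_W$ is birational onto its image rather than generically finite of higher degree. This is the concrete incarnation of the vanishing of scattering for a toric target, and I expect it to require the explicit elimination in $\G_m^2$ sketched above rather than a soft argument. In carrying it out one also has to verify that the generic choice of $W_k^M$ furnished by Proposition \ref{prop:properness-restriction} is compatible with the generic choice of $(P,Q)$, and that the normalization of the contact orders makes $\Psi_k^M$ of relative dimension zero, so that the number computed is a genuine point count.
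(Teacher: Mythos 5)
Your overall strategy—read off the invariant as a point count over generic points, pin down the curve class by intersection numbers, and then count explicitly in the toric surface—is the same reduction the paper performs, and your curve-class argument in the middle paragraph is correct and essentially parallel to the paper's. However, the proposal has a genuine gap exactly where you flag it: you do not actually establish that the degree is $1$, you only sketch a coordinate elimination in $\G_m^2$ and say you expect no soft argument to be available. The paper does have a soft argument, and it is the crux of the proof: \cite[Proposition 6.2]{keelFrobeniusStructureTheorem2019a} asserts that for a \emph{toric} target the map $(\st,\ev_{w'})$ is an open immersion onto $M_{0,5}\times U$. Once one knows the contributing maps avoid $E$ (so descend to $Y_t$) and lie over the fixed domain curve $\Gamma_k^M$, this immediately gives $\lvert F\rvert\leq 1$, with nonemptiness precisely for $\beta=\widehat\delta_k$. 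Your expectation that only an explicit elimination can rule out excess multiplicity is therefore misplaced; the hard part is already done in the cited reference.

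A second, smaller divergence: you evaluate at generic Berkovich points $(P,Q)$, whereas the paper deliberately evaluates at the skeletal points $(x_{w'},x_{g^k})\in\base\subset U$. This is not a cosmetic choice. It lets the paper invoke \cite[Theorem 8.18]{keelFrobeniusStructureTheorem2019a} to conclude that every $f\in F$ is \emph{skeletal} (image in $\bbase$), so that $\st(f)$ is determined by two edge lengths which in turn are read off from the images of the interior marks. That is how the paper pins down $\st(f)=\Gamma_k^M$ before applying the open-immersion result. In your version with generic $(P,Q)$, you still know $\Trop(f)=M_k$ by the definition of $M(T_k^M,\beta)$, but the claim that ``imposing $\Trop=M_k$ cuts out precisely this two-dimensional family'' is asserted rather than proved, and would require some substitute for the skeletality argument. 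So: right target, and you correctly isolated the delicate step, but the proposal as written does not close it; the intended closing move is the KY open immersion in the toric case, used in conjunction with skeletality.
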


\begin{proof}
  By construction of $W_k^M$, the moduli space ${M} (T_k^M ,\beta )$ is contained in the smooth locus $\sm{{M}} (U,\p_k^M ,\beta )$. Thus the evaluation maps $\ev_{w'}$ and $\ev_{g'}$ are étale \cite[Lemma 3.6]{keelFrobeniusStructureTheorem2019a}, and the invariant is the cardinality of the intersection of two fibers of these maps over arbitrary points.

  Let $F$ denote the fiber of $(\ev_{w'} , \ev_{g'})$ over the points $(x_{w'} ,x_{g^k} ) \in\base\subset U$ (recall that the essential skeleton is naturally included in $Y$). Any $f\in F$ is skeletal, meaning it has image contained in $\bbase$, by \cite[Theorem 8.18]{keelFrobeniusStructureTheorem2019a}.

  Let $\Gamma_k^M$ denote $\st (M_k $), we claim that $F \subset\st^{-1} (\Gamma_k^M)$. Indeed, for $f\in F$ the stablization $\st (f)$ is obtained by taking the convex hull of the marked points. Here, the image of $f$ is fixed, equal to the image of $M_k$, and the domain of $f$ does not have any nodes. This fixes the combinatorial type and the slope of $f$ on every edge of the domain of $f$. Then $\st (f)$ is completely determined by the choice of the length of the two finite edges. These lengths can be uniquely recovered from the image of the two interior marked points and the slopes of $f$.

  The previous observation implies that $F$ is contained in \linebreak ${(\st , \ev_{w'})^{-1} (\Gamma_k^M , x_{w'})\cap\Sp^{-1} (M_k )}$.
  Stable maps in this set do not meet the\linebreak exceptional locus $E$, so they correspond uniquely to stable map \linebreak in $\sm{{M}} (U_t , \p_k^M , \pi_{\ast}\beta )$.
  In the toric case, the map $(\st , \ev_{w'})$ is an open immersion with image ${M}_{0,5}\times U$ \cite[][Proposition 6.2]{keelFrobeniusStructureTheorem2019a}. Thus $F$ has cardinality at most $1$, and it is non empty if and only if $\beta=  \widehat{\delta}_k$, proving the lemma.
\end{proof}

\begin{prop}\label{prop:splitting-formula}
  Same notations as in Lemma \ref{lemma:compute-M}. For $\beta\in\NE (Y)$ and $1\leq k\leq t$, we have
  \[N(L_{k} , \beta - \widehat{\delta}_k )  = N(T_k^g ,\beta ) = \sum_{\beta = \beta_1+\beta_2} N(L_{k+1} ,\beta_1 )N(N_k , \beta_2) .\]
\end{prop}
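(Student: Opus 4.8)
The plan is to exploit that the count $N(T_k^g,\beta)$, being defined through the map $\Psi_k^g$ which by Corollary \ref{coro:proper-and-flat} is proper and flat on ${M}(T_k^g,\beta)_{W_k^g}$, is a deformation invariant along the path $\Delta$ of Figure \ref{fig:trop-path}, and then to read it off at the two ends of $\Delta$.

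First I would observe that, since $\Psi_k^g$ is proper and flat, the pushforward $\int_{{M}(T_k^g,\beta)_W}(\Psi_k^g)^\ast(\pt_\mu\extprod\cdots)$ depends only on the class of $\pt_\mu$ in $H^\ast(\overline{M}_{0,5})$; more precisely the degree of $\Psi_k^g$ over a point of the image of $\Delta$ in $\overline{M}_{0,5}$ is locally constant, hence constant since $\Delta$ is a connected path passing through $r=0$. We may therefore compute $N(T_k^g,\beta)$ by choosing $\pt_\mu$ to be the $0$-dimensional boundary stratum of $\overline{M}_{0,5}$ at either end of $\Delta$: the one limiting the family $\widehat{\Gamma}'_r$ (where the contracted middle component carries $g^k$ and separates an $L_k$-domain from an $M_k$-domain) or the one limiting $\widehat{\Gamma}_r$ (separating an $L_{k+1}$-domain from an $N_k$-domain). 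That the two resulting numbers agree is already the middle equality of the Proposition, once each is identified with the claimed product formula.

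Then I would carry out the degeneration analysis at the $\widehat{\Gamma}'_r$-end. Over this boundary point of $\overline{M}_{0,5}$, a stable map contributing to the count has nodal domain: collapsing the contracted $\mathbb{P}^1$ bearing $g^k$ leaves a curve $C'\cup_p C''$, where $C'$ carries the marked-point and twig data of $L_k$ and $C''$ that of $M_k$, and $f(p)\in H_k^g$. The connected-component conditions of Construction \ref{constr:connected-component} (stable in the limit by Proposition \ref{prop:connected-components}) force all twigs onto $C'$ and none onto $C''$, and the compatibility of the Zariski opens in Proposition \ref{prop:properness-restriction}(4) makes the point and divisor conditions defining $N(T_k^g,\beta)$ restrict exactly to the conditions defining $N(L_k,\cdot)$ on $C'$ and $N(M_k,\cdot)$ on $C''$, the node being glued along the diagonal of $Y$. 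Applying the splitting axiom of Gromov--Witten theory — valid for these moduli spaces by the Behrend--Manin axioms recalled above — together with the decomposition $\beta=\beta'+\beta''$ of the curve class, and using that $H_k^g$ is cut out transversally (it meets $D_k$ in a single point by Construction \ref{constr:trop-divisor}) so that no multiplicity enters at the gluing node, yields
\[N(T_k^g,\beta)=\sum_{\beta=\beta'+\beta''}N(L_k,\beta')\,N(M_k,\beta'').\]
By Lemma \ref{lemma:compute-M} the only surviving term is $\beta''=\widehat{\delta}_k$, with $N(M_k,\widehat{\delta}_k)=1$, whence $N(T_k^g,\beta)=N(L_k,\beta-\widehat{\delta}_k)$. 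The identical argument at the $\widehat{\Gamma}_r$-end, with $C_1$ the $L_{k+1}$-part (twigs $k+1,\dots,t$) and $C_2$ the $N_k$-part (the single degree-$1$ twig in direction $\w_k$), gives $N(T_k^g,\beta)=\sum_{\beta=\beta_1+\beta_2}N(L_{k+1},\beta_1)\,N(N_k,\beta_2)$, and combining the two displays completes the proof.

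The main obstacle I expect is making the two degeneration identifications precise: verifying that over each boundary point the fiber of $\Psi_k^g$, cut down by the evaluation and divisor conditions, is exactly the disjoint union over curve-class splittings of the products ${M}(T_k^L,\beta')\times{M}(T_k^M,\beta'')$ (resp. ${M}(T_{k+1}^L,\beta_1)\times{M}(T_k^N,\beta_2)$) with the node-matching condition. This is where the twig bookkeeping of Construction \ref{constr:connected-component}, the compatibility clause Proposition \ref{prop:properness-restriction}(4), and the transversality $H_k^g\cdot D_k=1$ all have to be combined, and where one must check that nothing is created or lost at the boundary — guaranteed only because $\Psi_k^g$ is proper and flat, so the degree is genuinely constant across $\Delta$, including at $r=0$ where the two abstract domains coincide. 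The invocation of the splitting axiom is then formal, but translating it into the \emph{ad hoc} smooth-locus/naive-count language used here is the substantive step.
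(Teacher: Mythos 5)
Your proposal is correct and follows essentially the same strategy as the paper: evaluate the invariant $N(T_k^g,\beta)$ at the two degenerate boundary points of $\overline{M}_{0,5}$ lying at the ends of $\Delta$, invoke the Behrend--Manin splitting/cutting-edge/forgetting-tail compatibilities with virtual fundamental classes to factor the count over each boundary stratum into a product (with $\beta$ split into effective classes on the two halves), and use Lemma \ref{lemma:compute-M} to collapse the $M_k$-factor to a single nonzero term. The paper makes the ``deformation invariance across $\Delta$'' implicit in the fact that $\pt_\mu$ is a cohomology class, and supplies the VFC bookkeeping through Lemmas \ref{lemma:restriction-fundamental-class}--\ref{lemma:splitting-fundamental-class}, but the substance — the two boundary degenerations, the role of Proposition \ref{prop:properness-restriction}(4), Construction \ref{constr:connected-component}, and the transversality $H_k^g\cdot D_k=1$ — is exactly what you identified.
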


\begin{figure}
  \begin{subfigure}{\textwidth}
  \begin{tikzpicture}
\tikzstyle{length} = [<->, dotted, thin]
\tikzstyle{leg} = []
\tikzstyle{twig} = [red]
\tikzstyle{interior-point} = [draw]

\begin{scope}[scale=0.6]
  \node[draw, circle] (1) at (-3,0) {} ;
  \node[draw,circle] (2) at (0,0) {} ;
  \node[draw,circle] (3) at (3,0) {} ;

  \draw (1.east) -- (2.west) ;
  \draw (2.east) -- node[above]{$e$} (3.west) ;

  \draw[] (2.south) -- (0,-2) node[below] {$g^k$};

  \draw[] (1.north west) -- (-4,1) ;
  \draw[] (1.south west) -- (-4,-1) ;
  \node[] (aux1) at (-4,0) {$\vdots$};

  \draw (3.north) -- (3,1.5) node[above]{$1'$} ;
  \draw (3.north east) -- (4,1) node[above right] {$2'$} ;
  \draw (3.south east) -- (4,-1) node[below right] {$w'$} ;
  \draw (3.south) -- (3 , -1.5) node[below] {$t'$};
.
  \node[] (A) at (0,-3.5) {$\tau$};

\end{scope}

\begin{scope}[scale=0.6, shift={(13,0)}]
\node[draw, circle] (1) at (-3,0) {} ;
\node[draw,circle] (2) at (0,0) {} ;
\node[draw,circle] (3) at (5,0) {} ;

\draw (1.east) -- (2.west) ;

\draw[] (2.south) -- (0,-2) node[below] {$g^k$};
\draw[] (2.east) -- (2,0) node[right] {$e_1$};

\draw[] (1.north west) -- (-4,1) ;
\draw[] (1.south west) -- (-4,-1) ;
\node[] (aux1) at (-4,0) {$\vdots$};

\draw (3.north) -- (5,1.5) node[above]{$1'$} ;
\draw (3.north east) -- (6,1) node[above right] {$2'$} ;
\draw (3.south east) -- (6,-1) node[below right] {$w'$} ;
\draw (3.south) -- (5 , -1.5) node[below] {$t'$};
\draw (3.west) -- (4,0) node[left] {$g'$};

\node[] (A) at (2,-3.5) {$\sigma_1\coprod \underline{\sigma_2}$};

\end{scope}

\end{tikzpicture}
  \subcaption{Cutting the edge $e$ of $\tau$ to obtain $\sigma$.}
  \end{subfigure}
  \vspace{1cm}

  \begin{subfigure}{\textwidth}
  \begin{tikzpicture}
\tikzstyle{length} = [<->, dotted, thin]
\tikzstyle{leg} = []
\tikzstyle{twig} = [red]
\tikzstyle{interior-point} = [draw]

\begin{scope}[scale=0.6, shift={(0,0)}]
\node[draw, circle] (1) at (-3,0) {} ;
\node[draw,circle] (2) at (0,0) {} ;

\draw (1.east) -- (2.west) ;

\draw[] (2.south) -- (0,-2) node[below] {$g^k$};
\draw[] (2.east) -- (2,0) node[right] {$e_1$};

\draw[] (1.north west) -- (-4,1) ;
\draw[] (1.south west) -- (-4,-1) ;
\node[] (aux1) at (-4,0) {$\vdots$};

\node[] (A) at (0,-3.5) {$\sigma_1$};

\end{scope}

\begin{scope}[scale=0.6, shift={(13,0)}]
\node[draw, circle] (1) at (-3,0) {} ;



\draw[] (1.north west) -- (-4,1) ;
\draw[] (1.south west) -- (-4,-1) ;
\node[] (aux1) at (-4,0) {$\vdots$};
\draw[] (1.east) --  (0,0) node[right]{$g^k$} ;

\node[] (A) at (0,-3.5) {$\underline{\sigma_1}$};

\end{scope}

\end{tikzpicture}
  \subcaption{Forgetting the tail $e_1$ to obtain $\sigma'$.}
  \end{subfigure}
  \caption{Degeneration of the combinatorial type.}
  \label{fig:combinatorics-degeneration}
\end{figure}
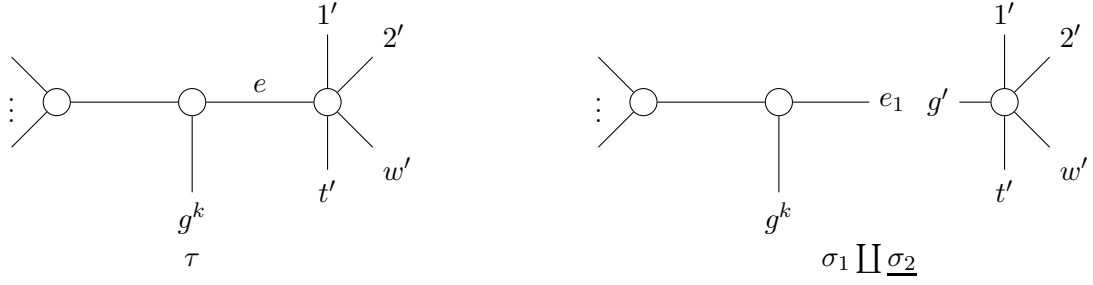
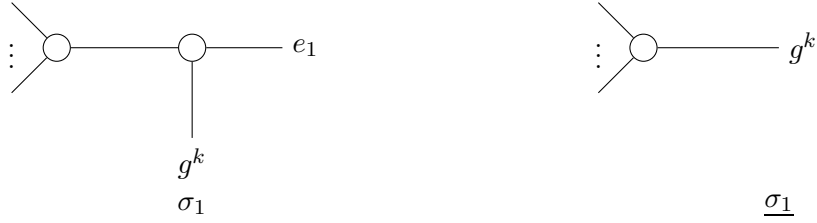

\begin{proof}
  Let us prove the first equality. We fix $\mu\in\overline{{M}}_{0,5}$ given by the partition $(w,t^k \vert g^k \vert w', t')$, and let ${M} (T_k^g,\beta )_{W,\mu}$ denote the substack of stable maps over $\mu$. For $i\in I_k^g$, we denote by $\gamma_i\in H^{\ast} (X, \Q (\ast ))$ a cycle represented by a smooth subvariety $Z_i$ of $Y$. We assume $Z_{g^k} = H_k^g$.

  We shall use $(\tau,\beta )$-marked stable maps to keep track of the combinatorics of the degeneration \cites{behrend_stacks_1995}{porta_non-archimedean_2022}{porta_non-archimedean_2022-1}.
  By construction, stable maps in ${M} (T_k^g , \beta )_{W,\mu}$ have a fixed graph type $\tau$.
  Fix a decomposition of $\beta$ into effective curve classes ${\beta = \beta_1 + \beta_2}$. Denote by $\underline{\tau} = (\tau , (\beta_1, 0,\beta_2))$ the associated $A$-graph.

  Consider the moduli space
  \[{M}^g (\underline{\tau} )\coloneqq {M} (T_k^g ,\beta )_{W,\mu}\bigcap_{i\in I_k^g} \ev_i^{-1} (Z_i) \cap\overline{{M}} (Y , \underline{\tau}), \]
  denote by $q\colon {M}^g (\underline{\tau})\rightarrow \pt$ its structure morphism.
  Let $\underline{\sigma} = \underline{\sigma_1}\coprod\underline{\sigma_2}$ denote the new $A$-graph obtained by cutting the edge connecting the middle vertex to the rightmost vertex and forgetting the new tail (Figure \ref{fig:combinatorics-degeneration}). Consider the moduli spaces
  \[{M}^{1} (\underline{\sigma_1} ) \coloneqq {M} (T_k^L , \beta_1)_W \bigcap_{i\in I_k^L} \ev_i^{-1} (Z_i) \cap \overline{{M}} (Y , \underline{\sigma_1}) ,\]
  \[{M}^2 (\underline{\sigma_2} ) \coloneqq {M} (T_k^M , \beta_2)_W \bigcap_{i\in I_k^M} \ev_i^{-1} (Z_i) \cap \overline{{M}} (Y , \underline{\sigma_2}) .\]
  Let ${M}^{\text{cut}} (\underline{\sigma})$ be the moduli space defined by the derived pullback diagram:
  \begin{cd}
    {M}^{\text{cut}} (\underline{\sigma}) \ar[rd,"q'",no head] \ar[r, "p_2"] \ar[d , "p_1"] & {M}^2 (\underline{\sigma_2}) \ar[rdd, bend left, "q_2"] \ar[d,"\ev_{g'}"] \\
    {M}^1 (\underline{\sigma_1}) \ar[r, "\ev_{g^k}"] \ar[rrd, bend right, "q_1"]& Z_{g^k} \ar[rd] \\
    & & \pt
  \end{cd}
  Let $c\colon{M}^g (\underline{\tau})\rightarrow {M}^{1} (\underline{\sigma_1})\times {M}^2 (\underline{\sigma_2} ) $ denote the composition of the morphism cutting the edge $e$ and forgetting the new tail $e_1$. Note that in this case, at the level of domain curves the map forgetting the tail $e_1$ is an isomorphism.
  From Proposition \ref{prop:properness-restriction}.(4) we deduce that the map $c$ induces an isomorphism ${{M}^g (\underline{\tau}) \overset{\sim}{\rightarrow}{M}^{\text{cut}} (\underline{\sigma})}$.

  The map $c$ is the composition of a cutting-an-edge morphism and a forgetting-a-tail morphism, so it is compatible with virtual fundamental classes by \cite[][Theorem 1.1]{porta_non-archimedean_2022-1}. Together with Lemma \ref{lemma:splitting-fundamental-class} this gives
  \[q_{\ast} [{M}^g (\underline{\tau})] = q_{\ast}' [{M}^{\text{cut}} (\underline{\sigma})] = q_{1\ast} [{M}^1 (\underline{\sigma_1})] \cdot q_{2\ast} [{M}^2(\underline{\sigma_2}) / Z_{g^k} ] .\]
  Now we specialize to $Z_i$ a smooth point for $i\in \lrbrace{w,w',g^1,\cdots ,g^{k-1}}$, $Z_{g^{\ell}} = H_{\ell}^g$ and $Z_{t^{\ell}} = H_{\ell}^t$ for $k\leq \ell\leq t$.

  By Lemma \ref{lemma:relative-fundamental-class} and Lemma \ref{lemma:restriction-fundamental-class}
  \begin{align*}
  q_{2\ast} [{M}^2 (\underline{\sigma_2}) / H_k^g] &= q_{\ast}^M \lr{(\ev_{w'}^{\ast} [\pt ]\cup \ev_{g'}^{\ast} [\pt ]\cup \ev_{t'}^{\ast} [H_k^t] )\cap [M (T_k^M , \beta_2 )_W] } \\
  &= N(M_k,\beta_2) .
  \end{align*}
  Similarly, Lemma \ref{lemma:restriction-fundamental-class} gives
  \[q_{1\ast} [{M}^1 (\underline{\sigma_1})] = N(L_k , \beta_1 ) .\]
  Given that the union over all $A$-graph $\underline{\tau}$ associated to a splitting $\beta = \beta_1+0+\beta_2$ into effective classes equals the moduli space responsible for the count $N(T_k^g , \beta )$, we deduce the first eqality
  \[N(T_k^g ,\beta )  = \sum_{\beta_1+\beta_2 = \beta} N(L_k ,\beta_1 ) N(M_k ,\beta_2 ) = N(L_k ,\beta - \widehat{\delta}_k)  .\]
  The last equality being obtained by Lemma \ref{lemma:compute-M}. A similar reasoning based on the choice of domain $\mu '\in\overline{{M}}_{0,5}$ given by the partition $(w,t'\vert g^k \vert w',t^k)$ proves the second equality.
\end{proof}

The previous proof relies on the following lemmas, which are direct computations making use of the very good functoriality properties of the analytic Borel-Moore motivic cohomology of derived analytic spaces. Lemma \ref{lemma:restriction-fundamental-class} expresses the\linebreak compatibility of the restriction of the virtual fundamental class to a derived lci subspace.
Lemma \ref{lemma:relative-fundamental-class} relate the relative virtual fundamental class to an absolute virtual fundamental class, and Lemma \ref{lemma:splitting-fundamental-class} is a splitting formula of the virtual fundamental class of a derived fiber product.

\begin{lemma} \label{lemma:restriction-fundamental-class}
  Consider a pullback square of derived $k$-analytic spaces over a point
  \begin{cd}
    W \ar[r,"i"] \ar[d , "g"] & X \ar[d,"f"] \\
    Y \ar[r, "j"] & Z
  \end{cd}
  Assume $j$ is derived lci and $Z$ is smooth. Then $\lr{f^{\ast} \PD^{-1} j_{\ast} [Y] } \cap [X] = i_{\ast} [W]$.
\end{lemma}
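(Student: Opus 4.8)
The plan is to identify $\PD^{-1}j_{\ast}[Y]$ with the cohomological fundamental class $\mathrm{cl}(j)\in H^{2c}(Z,\Q(c))$ of the derived lci morphism $j$ inside the smooth space $Z$ — here $c$ denotes the virtual codimension of $j$ — to observe that its pullback along $f$ is the analogous class $\mathrm{cl}(i)\in H^{2c}(X,\Q(c))$ of the base-changed morphism $i$, and then to invoke the self-intersection formula together with derived base change for virtual fundamental classes.

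First I would collect the formal input from the rigid motivic Borel--Moore homology of derived $k$-analytic stacks of \cite{porta_non-archimedean_2022} (the analytic incarnation of the purity and six-functor formalism of motivic homotopy theory). The derived lci morphism $j$ carries a fundamental/purity class; since the square is a \emph{derived} pullback, $i\colon W\to X$ is again derived lci, with $\mathbb{L}_i\simeq i^{\ast}\mathbb{L}_j$, and its purity class is the base change of that of $j$. Assume $j$ is proper — this holds in the intended applications, where $j$ is a closed immersion, so that $i$ is as well and $i_{\ast}$ is defined on Borel--Moore homology. Set $\mathrm{cl}(j):=j_{\ast}\eta_j\in H^{2c}(Z,\Q(c))$ and $\mathrm{cl}(i):=i_{\ast}\eta_i\in H^{2c}(X,\Q(c))$; base change of these classes along $f\colon X\to Z$ gives $f^{\ast}\mathrm{cl}(j)=\mathrm{cl}(i)$. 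On the smooth space $Z$, Poincar\'e duality $\PD(-)=(-)\cap[Z]$ is an isomorphism, and the self-intersection formula yields $\mathrm{cl}(j)\cap[Z]=j_{\ast}(j^{!}[Z])=j_{\ast}[Y]$ (using $[Y]=j^{!}[Z]$, valid since $Z$ is smooth); hence $\mathrm{cl}(j)=\PD^{-1}j_{\ast}[Y]$.

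Now I would compute directly. Using the two previous identifications, then the self-intersection formula once more (this time over the possibly singular $X$), and finally $i^{!}[X]=[W]$, one gets
\[
\bigl(f^{\ast}\PD^{-1}j_{\ast}[Y]\bigr)\cap[X]=\bigl(f^{\ast}\mathrm{cl}(j)\bigr)\cap[X]=\mathrm{cl}(i)\cap[X]=i_{\ast}\bigl(i^{!}[X]\bigr)=i_{\ast}[W].
\]
Here $i^{!}[X]=[W]$ is the invariance of the virtual fundamental class under derived base change; for the moduli stacks at hand this is part of the Behrend--Manin package of \cite{porta_non-archimedean_2022-1}, and in general it follows from the multiplicativity of fundamental classes applied to $W\xrightarrow{i}X\to\pt$ together with base change of the purity class of $i$.

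I do not expect a conceptual obstacle; the real work is bookkeeping: verifying that each ingredient — base change of the purity class, the self-intersection formula over $Z$ and over $X$, the identification $[Y]=j^{!}[Z]$ for smooth $Z$, and derived base change of the virtual class — is literally available in the analytic Borel--Moore formalism of \cite{porta_non-archimedean_2022}, and carrying the Tate twists $\Q(\ast)$ through correctly.
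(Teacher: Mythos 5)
Your proof is correct in substance, but it takes a genuinely different route from the paper's. You organize the argument around the \emph{horizontal} maps of the square: you first reinterpret $\PD^{-1}j_{\ast}[Y]$ as the cohomological fundamental class $\mathrm{cl}(j)=j_{\ast}\eta_j$ (via the self-intersection formula over the smooth $Z$ together with $j^![Z]=[Y]$), then pull back this class along $f$ using cohomological base change, and finally apply the self-intersection formula a second time — this time over the possibly singular $X$ — to get $\mathrm{cl}(i)\cap[X]=i_{\ast}(i^![X])=i_{\ast}[W]$. The paper instead organizes the argument around the \emph{vertical} map $f$: it writes $[X]=f^![Z]$, uses the projection formula for the Gysin pullback $f^!$ against the cap product (\cite[Proposition 4.10.(1)]{porta_non-archimedean_2022-1}), and then applies the Borel--Moore base change $f^!j_{\ast}=i_{\ast}g^!$ (\cite[Proposition 4.11]{porta_non-archimedean_2022-1}). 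Both routes rely on $Z$ being smooth to identify $\PD$ with $\cap[Z]$ and on multiplicativity of fundamental classes to reduce one Gysin pullback of a fundamental class to another.

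What the paper's route buys is that every step is literally one of the cited propositions from \cite{porta_non-archimedean_2022-1}; there is nothing left to check. Your route requires, as you yourself flag, verifying that the purity class for a derived lci morphism, the base change of that purity class along $f$, and the self-intersection formula over a non-smooth base are all available in the rigid motivic Borel--Moore formalism of \cite{porta_non-archimedean_2022} — plausible given the six-functor package, but not spelled out in the cited sources, and a bit more work to track down (including the properness assumption on $j$, which you correctly observe is harmless in the intended applications since $j$ is a closed immersion there). The trade-off is that your formulation makes the statement look like the classical ``cap with the cycle class of a regular embedding'' identity, which is conceptually transparent; the paper's computation is more terse but sits closer to the available machinery.
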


\begin{proof}
  First, we note that since $Z$ is smooth, we have $\gamma \cap [Z] = \PD (\gamma )$ for all cycle $\gamma$ \cite[][Theorem 2.26]{khan_virtual_nodate}. In particular, we get $\PD^{-1} j_{\ast} [Y]\cap [Z] = j_{\ast} [Y]$. Then
  \begin{align*}
    (f^{\ast} \PD^{-1} j_{\ast} [Y]) \cap [X] &= (f^{\ast} \PD^{-1} j_{\ast} [Y]) \cap f^![Z] \\
    &= f^! (\PD^{-1} j_{\ast} [Y]  \cap [Z] ) && \text{by \cite[][Proposition 4.10.(1)]{porta_non-archimedean_2022-1}} \\
    &= f^! j_{\ast} [Y] \\
    &= i_{\ast} g^! [Y] && \text{by \cite[][Proposition 4.11]{porta_non-archimedean_2022-1}}\\
    &= i_{\ast} [W] .
  \end{align*}
\end{proof}

\begin{lemma} \label{lemma:relative-fundamental-class}
  Consider a pullback square of derived $k$-analytic spaces over a point
  \begin{cd}
    W \ar[r,"i"] \ar[d,"g"]& X \ar[d,"f"] \ar[rdd, "q", bend left]\\
    \pt \ar[rrd, equal, bend right]\ar[r,"j"] & Z \ar[rd,"p"] \\
    & & \pt
  \end{cd}
  Assume $j$ is a smooth point of $Z$, and $f$ and $p$ are proper. Let $\gamma\in H^{\ast} (X , \ca{F})$, then
  \[g_{\ast} (i^{\ast }\gamma \cap [W]) = q_{\ast} (\gamma \cap [X/Z]) .\]
\end{lemma}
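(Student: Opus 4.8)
The plan is to reduce the statement to the compatibility of the relative virtual fundamental class with base change, together with the fact that the absolute fundamental class of the point is the unit. The key observation is that since $j\colon \pt \to Z$ is a \emph{smooth} point, the inclusion $\pt \hookrightarrow Z$ is itself a derived lci (in fact smooth) morphism of relative dimension $-\dim Z$, so $j^![Z] = [\pt]$, and more generally $g^![X] = g^! f^![Z]\cdot(\text{shift})$ relates the relative fundamental classes on the two sides. I would first record the base-change square for the relative virtual fundamental class: the square in the statement is cartesian, $f$ and its base change $g$ are both proper and derived lci over their targets (indeed $W$ is the derived fiber of $f$ over a smooth point, hence derived lci over $\pt$), so by the base-change property of relative virtual fundamental classes \cite[Proposition 4.10]{porta_non-archimedean_2022-1} we have $j^![X/Z] = [W/\pt] = [W]$, where $j^!$ is the refined Gysin map associated to the lci closed immersion $j$.

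Next I would unwind the projection formula. Write $\gamma \in H^*(X,\ca F)$. The class $\gamma \cap [X/Z]$ lives in the Borel–Moore homology of $X$ relative to $Z$; pushing forward along the proper map $f$ and then applying $j^!$, functoriality of proper pushforward and refined Gysin maps \cite[Proposition 4.11]{porta_non-archimedean_2022-1} gives
\[
j^!\bigl(f_*(\gamma\cap[X/Z])\bigr) = g_*\bigl(i^!(\gamma\cap[X/Z])\bigr) = g_*\bigl(i^*\gamma \cap i^![X/Z]\bigr) = g_*\bigl(i^*\gamma\cap[W]\bigr),
\]
using compatibility of the cap product with Gysin pullback and the base-change identity $i^![X/Z]=[W]$ from the previous step. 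On the other hand, since $j$ is a smooth point, $j^!$ is an isomorphism onto degree-zero homology and is identified with the evaluation/pushforward $q_*$ composed with $p_*$ being the identity on $\pt$; concretely $j^!(f_*(\gamma\cap[X/Z])) = p_* f_*(\gamma\cap[X/Z]) = q_*(\gamma\cap[X/Z])$, because the refined Gysin map along $\pt\hookrightarrow Z$ composed with $p_*$ is $\mathrm{id}_{\pt *}$ applied after pushing all the way down, and $Z$ being smooth means $p_*(\,\cdot\cap[Z]) = \PD$, matched with the relative class via $[X]=f^![Z]$. Combining the two computations yields $g_*(i^*\gamma\cap[W]) = q_*(\gamma\cap[X/Z])$, which is the claim.

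The main obstacle I expect is bookkeeping the degree shifts and the precise identification $j^!\circ p_* = \mathrm{id}$ in motivic Borel–Moore homology: one must check that, because $j$ is a smooth point and $Z$ is smooth, the refined Gysin pullback along $j$ followed by proper pushforward to $\pt$ computes the same thing as capping with the absolute fundamental class $[X]$ and pushing to $\pt$ — i.e.\ that $f^![Z]=[X]$ is compatible with $[X/Z]$ under this operation. This is exactly the "relative-to-absolute" comparison alluded to in the paragraph preceding the lemma, and it follows from \cite[Theorem 2.26]{khan_virtual_nodate} (capping with the fundamental class of a smooth space is Poincaré duality) applied on $Z$, together with the projection formula for $f^!$ in \cite[Proposition 4.10.(1)]{porta_non-archimedean_2022-1}. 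Once those functoriality statements are in place the rest is a short diagram chase around the stated pullback square.
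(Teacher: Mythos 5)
Your plan lands on essentially the same ingredients as the paper's proof---base change for the relative virtual fundamental class and compatibility of cap products with pullback---but you route the computation differently and, in doing so, run into a step that needs repair. The paper's proof is a four-line calculation: write $g = q\circ i$, so $g_*(i^*\gamma\cap[W]) = q_*i_*(i^*\gamma\cap[W])$; apply the projection formula along $i$ to get $q_*(\gamma\cap i_*[W])$; substitute $[W] = i^*[X/Z]$ by base change (Proposition~4.7 of \cite{porta_non-archimedean_2022-1}); and conclude using that $i$ is an immersion. You instead compute the left-hand side via $g_*(i^*\gamma\cap[W]) = g_*\bigl(i^!(\gamma\cap[X/Z])\bigr) = j^!f_*(\gamma\cap[X/Z])$, which is a valid rearrangement using the same base-change input, so you and the paper in fact arrive at the same intermediate expression.

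The gap is in how you close the argument. You assert that $j^!\bigl(f_*(\gamma\cap[X/Z])\bigr) = p_*f_*(\gamma\cap[X/Z]) = q_*(\gamma\cap[X/Z])$ because ``$j^!$ is an isomorphism onto degree-zero homology and is identified with $p_*$.'' This identification is not correct as stated: the refined Gysin pullback $j^!$ along $\pt\hookrightarrow Z$ and the proper pushforward $p_*$ along $Z\to\pt$ are genuinely different operations, even on classes that happen to land in degree zero (one restricts to the chosen fibre, the other integrates over all of $Z$), and they do not satisfy $j^! = p_*$ in general. The paper avoids having to compare $j^!$ with $p_*$ at all: it never leaves the world of proper pushforwards, instead pushing $i^*\gamma\cap[W]$ forward along $i$, applying the projection formula, and rewriting $i_*[W]$ by base change. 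You correctly flagged this as the ``main obstacle'' of your plan, and I'd say it is exactly where the plan needs to be replaced by the paper's projection-formula route. Everything else---recognizing that the pullback square makes $W$ derived lci over $\pt$, that $j^![X/Z] = [W]$ by base change, and that cap products are compatible with Gysin restriction---is on target.
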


\begin{proof}
This is a consequence of the projection formula
\begin{align*}
  g_{\ast} (i^{\ast} \gamma \cap [W]) &= q_{\ast} i_{\ast} (i^{\ast}\gamma \cap [W]) \\
  &=q_{\ast} (\gamma\cap i_{\ast} [W]) && \text{by the projection formula}\\
  &= q_{\ast} (\gamma\cap i_{\ast} i^{\ast} [X/Z]) &&\text{by base change \cite[][Proposition 4.7]{porta_non-archimedean_2022-1}} \\
  &= q_{\ast} (\gamma \cap [X/Z]) && \text{because } i \text{ is an immersion.}
\end{align*}
\end{proof}

\begin{lemma} \label{lemma:splitting-fundamental-class}
Consider a pullback square of derived $k$-analytic spaces over a point
\begin{cd}
  W \ar[r,"k"]\ar[rd,"e"] \ar[d,"g"] & X \ar[d,"f"] \ar[rdd,"b", bend left] \\
  Y \ar[r,"h"] \ar[rrd,"c", bend right = 20] & Z \ar[rd,"d"] \\
  & & \pt
  \end{cd}
  Let $a=d\circ e$. Assume $d,f$ and $h$ are proper and derived lci. Then in $\BMho[0] (\pt, \ca{F})$
  \[a_{\ast} [W] = b_{\ast} [X] c_{\ast} [Y/Z] .\]
\end{lemma}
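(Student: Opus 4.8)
The plan is to reduce this splitting statement to the functoriality properties of the virtual fundamental class established in \cite{porta_non-archimedean_2022-1}, in particular base change and the compatibility of relative fundamental classes with composition. The key point is that the square is a \emph{derived} pullback, so the virtual fundamental class of $W$ is compatible with those of $X$ and $Y$ over $Z$ via Gysin pullback, and the hypothesis that $d$, $f$ and $h$ are proper and derived lci makes all the pushforwards and refined Gysin maps available.

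First I would unwind the relative class $[W/Y]$: since the square is a derived pullback and $f$ is derived lci, the morphism $k\colon W\rightarrow X$ is also derived lci of the same virtual dimension, and $[W/Y] = g^{!}[X/Z]$ — more precisely the relative fundamental class is stable under derived base change, which is \cite[Proposition 4.7]{porta_non-archimedean_2022-1}. Dually, writing $[W] = [W/Y]\cap g^{*}(\text{something})$ is awkward because $Y$ is not assumed smooth; instead I would work with the composite $a = d\circ e$ directly and use that for a composition of proper derived lci morphisms the fundamental class multiplies: $[W/\pt] = e^{!}[Y/\pt]$ when $e$ is the pullback of the proper derived lci map $d$ along $c\colon Y\to\pt$... but again $e$ is a pullback of $d$, so $e$ is proper derived lci and $[W] = [W/\pt]$, $[Y]=[Y/\pt]$. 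The cleanest route: apply \cite[Proposition 4.10]{porta_non-archimedean_2022-1} (compatibility of Gysin pullback with proper pushforward / projection formula) and \cite[Proposition 4.11]{porta_non-archimedean_2022-1} (base change for the refined Gysin map) exactly as in the proof of Lemma \ref{lemma:restriction-fundamental-class}, to rewrite $a_{\ast}[W]$.

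Concretely I would argue: $a_{\ast}[W] = d_{\ast}e_{\ast}[W]$; since $W = Y\times_Z X$ as derived stacks and $f$ is proper derived lci, base change gives $e_{\ast}[W] = e_{\ast}k^{!}[Y] = f^{!}\!\big(h_{\ast}[Y]\big)$ after identifying the relative classes — here I use that $[W]=k^{!}[Y]$ for the Gysin map associated to the derived lci morphism obtained by pulling back $h$ (not $f$), which holds because the square is a derived fibre product. Then $d_{\ast}f^{!}(h_{\ast}[Y])$: applying the projection formula / \cite[Proposition 4.10]{porta_non-archimedean_2022-1} for the proper map $d$ together with $f^{!}$ lets me pull $h_{\ast}[Y]$ out as $c_{\ast}[Y/Z]$, while $d_{\ast}$ of the remaining piece $[X]=f^{!}[Z]$ (using $Z$ smooth or at worst proper derived lci over a point) gives $b_{\ast}[X]$. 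Matching the bookkeeping of twists and the $\Q(\ast)$-coefficients yields $a_{\ast}[W] = b_{\ast}[X]\,c_{\ast}[Y/Z]$ in $\BMho[0](\pt,\ca{F})$.

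The main obstacle I expect is purely bookkeeping: getting the degrees and Tate twists to line up, and making sure I invoke base change for the correct one of the two derived lci morphisms $k$ and (pullback of) $h$ so that the relative class over $Z$ is the one that survives — this is exactly the subtlety that forced the case distinction in Lemma \ref{lemma:restriction-fundamental-class}. There is no genuinely new geometric input; the whole content is the combination of \cite[Propositions 4.10, 4.11]{porta_non-archimedean_2022-1} and the derived-pullback hypothesis, just as in the two preceding lemmas.
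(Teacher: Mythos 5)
Your general toolkit is right—base change, proper pushforward, the composition-product structure from \cite{porta_non-archimedean_2022-1}—but there are two concrete problems with the argument as written, and they affect the substance, not just the notation.

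First, the formula $e_{\ast}[W] = f^{!}(h_{\ast}[Y])$ is wrong. Since $e = f\circ k = h\circ g$, and the base-change identity (Proposition 4.11 of \cite{porta_non-archimedean_2022-1}, as used in Lemma \ref{lemma:restriction-fundamental-class}) reads $f^{!}h_{\ast} = k_{\ast}g^{!}$, what you actually get from $g^{!}[Y] = [W]$ is $k_{\ast}[W] = f^{!}(h_{\ast}[Y])$, so that $a_{\ast}[W] = d_{\ast}f_{\ast}k_{\ast}[W] = b_{\ast}f^{!}(h_{\ast}[Y])$. Your subsequent step ``then $d_{\ast}f^{!}(h_{\ast}[Y])$'' compounds this: $f^{!}(h_{\ast}[Y])$ is a class on $X$, not on $Z$, so $d_{\ast}$ of it is not even defined; the outer pushforward must be $b_{\ast} = d_{\ast}f_{\ast}$.

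Second, and more importantly, the statement that ``the projection formula lets me pull $h_{\ast}[Y]$ out as $c_{\ast}[Y/Z]$, while $d_{\ast}$ of the remaining piece gives $b_{\ast}[X]$'' is the entire content of the lemma and is not a single application of a projection formula. To factor $b_{\ast}f^{!}(h_{\ast}[Y])$ into the product $b_{\ast}[X]\cdot c_{\ast}[Y/Z]$ you must (i) decompose $h_{\ast}[Y] = h_{\ast}[Y/Z]\circ[Z]$ using the compatibility $h_{\ast}(\alpha\circ\beta)=h_{\ast}\alpha\circ\beta$, (ii) expand $f^{!}(-)=[X/Z]\circ(-)$, (iii) commute $h_{\ast}[Y/Z]\in\BMho(Z/Z)$ past $[Z]$ using the commutativity of the various products on $\BMho(Z/Z)$ and its module action, and (iv) re-absorb $[X/Z]\circ[Z]=[X]$ before pushing forward. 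The paper does exactly this chain, starting instead from the external-product decomposition $[W/Z]=[X/Z]\boxtimes[Y/Z]$ of the relative class of the derived fiber product (Propositions 4.6, 4.7, 4.10 of \cite{porta_non-archimedean_2022-1}). Your Gysin-pullback route does converge to the same middle expression $d_{\ast}(f_{\ast}[X/Z]\circ h_{\ast}[Y/Z]\circ[Z])$, but only after these steps are spelled out; characterizing them as ``purely bookkeeping'' and deferring them is not acceptable here, because they \emph{are} the proof. Note also that the paper never assumes $Z$ smooth: $d\colon Z\to\pt$ is only required to be proper derived lci, which is enough for $[Z]$ and $[X]=[X/Z]\circ[Z]$ to exist, so the parenthetical ``using $Z$ smooth'' should be dropped.
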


\begin{proof}
Recall the compatibility between pushforward and composition product \cite[][\S2.3.4]{khan_virtual_nodate} for a proper map $f\colon X\rightarrow Y$ of derived $k$-analytic spaces over $S$, for all ${\alpha\in \BMho[s] (X/Y ,\ca{F} (r))}$ and $\beta\in\BMho [s'] (Y/S  , \ca{F} (r') )$ we have $f_{\ast} (\alpha \circ\beta ) = f_{\ast} (\alpha ) \circ \beta$.

We also recall that for a derived $k$-analytic space $X$, in $\BMho (X/X , \ca{F}(\ast ))$ the right composition product, the left composition product and the external product $\boxtimes$ coincide. Furthermore, under the identification with motivic cohomology\linebreak ${\BMho[s] (X/X , \ca{F} (r)) = \H^{-s} (X,\ca{F} (-r))}$ these coincide with the cup-product of motivic cohomology on Borel-Moore homology, and with the cap-product of\linebreak motivic cohomology. In particular, the composition product becomes commutative in this case.

We now compute
\begin{align*}
  a_{\ast} [W] &= a_{\ast} \lr{[W/Z]\circ [Z]} && \text{by \cite[][Proposition 4.6]{porta_non-archimedean_2022-1}} \\
    &= d_{\ast} e_{\ast} \lr{ \lr{[X/Z]\extprod [Y/Z]}\circ [Z]} && \text{by \cite[][Propositions 4.6, 4.7 and 4.10.(4)]{porta_non-archimedean_2022-1}} \\
  &=  d_{\ast} \lr{e_{\ast} \lr{[X/Z]\extprod [Y/Z]}\circ [Z] } && \text{since } e_{\ast} (\alpha \circ\beta )= e_{\ast} (\alpha) \circ\beta \\
   &= d_{\ast} \lr{\lr{f_{\ast} [X/Z] \extprod h_{\ast} [Y/Z]}\circ [Z] } && \text{by \cite[][Proposition 4.10.(6)]{porta_non-archimedean_2022-1}} \\
  &= d_{\ast} \lr{f_{\ast} [X/Z] \circ h_{\ast} [Y/Z]\circ [Z]}  && \text{since } \extprod = \circ \\
  &= d_{\ast} \lr{f_{\ast} [X/Z] \circ [Z] \circ h_{\ast} [Y/Z] } && \text{by commutativity} \\
  &= d_{\ast} \lr{f_{\ast}\lr{ [X/Z] \circ [Z] } \circ h_{\ast} [Y/Z] } && \text{since } f_{\ast} (\alpha \circ\beta )= f_{\ast} (\alpha) \circ\beta \\
  &= d_{\ast} \lr{f_{\ast} [X] \circ h_{\ast} [Y/Z] } && \text{by \cite[][Proposition 4.6]{porta_non-archimedean_2022-1}}\\
  &= b_{\ast} [X] c_{\ast} [Y/Z] && \text{by \cite[][Proposition 4.10.(6)]{porta_non-archimedean_2022-1}}.
\end{align*}

\end{proof}

The next proposition expresses the counts that appear in the initial step and the final steps of the inductive twig-removal procedure.

\begin{prop}\label{prop:compute-L}
  Given $\beta\in\NE (Y)$, we have:
  \begin{enumerate}
    \item $N (L_1 ,\beta ) = N(\widehat{V},\beta )$, the count of the extended initial cylinder.
    \item $N(L_{t+1} ,\beta ) =0$ for $\beta\neq \widehat{\delta}_V+\sum_{s=1}^t \delta_s$, and $N(L_{t+1} ,\widehat{\delta}_V+\sum_{s=1}^t \delta_s )=1$.
  \end{enumerate}
\end{prop}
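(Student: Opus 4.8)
The plan is to reduce both identities to counting points in a fibre of a proper flat morphism, using that the relevant moduli spaces have been arranged to lie in the smooth locus.

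\textbf{Common reduction.} By Proposition~\ref{prop:properness-restriction} the spaces $M(T_1^L,\beta)_W$ and $M(T_{t+1}^L,\beta)_W$ lie in the smooth loci $\sm{\overline{M}(V_1^L,\beta)}$ and $\sm{\overline{M}(V_{t+1}^L,\beta)}$, on which the evaluation maps are étale onto their images by \cite[Lemma~3.6]{keelFrobeniusStructureTheorem2019a}, and by Corollary~\ref{coro:proper-and-flat} the maps $\Psi_1^L$ and $\Psi_{t+1}^L$ are proper and flat. Since the classes capped in Definition~\ref{def:glued-invariant} are of top degree, each of $N(L_1,\beta)$ and $N(L_{t+1},\beta)$ is, without multiplicity, the number of stable maps in the respective space whose marked points satisfy the prescribed incidences; the whole task is to identify these finite sets.

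\textbf{Proof of (1).} The tropical curve $L_1$ differs from $\widehat{V}$ only by the $2t$ constant interior legs $v_{g^s},v_{t^s}$ attached to the twig rays $\R_{\geq 0}\w_s$ at the points $x_{g^s},x_{t^s}$. I would consider the forgetful morphism $\Fgt$ dropping these $2t$ markings; at the tropical level it sends $L_1$ to $\widehat{V}$, and since $\widehat{V}$ has a cylinder spine (hence meets a single wall) the stable maps it parametrises have toric caps and automatically satisfy the toric-tail condition of \cite[Construction~9.3]{keelFrobeniusStructureTheorem2019a}. Thus $\Fgt$ takes values in the set $F_w(S_w,\beta)\cap\Trop^{-1}(\widehat{V})$ of Definition~\ref{dfn:cylinder-count}, and the $\ev_w$-conditions match on the two sides. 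It then remains to see that, after imposing $\ev_{g^s}\in H_s^g$ and $\ev_{t^s}\in H_s^t$, the morphism $\Fgt$ becomes a bijection onto that set. Indeed, for a map in $M(T_1^L,\beta)_W$ the point $v_{g^s}$ tropicalises to $x_{g^s}$, which lies on $\R_{\geq 0}\w_s$; since $H_s^g$ is transverse to this direction and $H_s^g\cdot D_{i(s)}=1$ by the tropical intersection formula (Construction~\ref{constr:trop-divisor}) while $V$ is primitive, the component of the domain lying over $\R_{\geq 0}\w_s$ meets $H_s^g$ in a single reduced point, disjoint from the special points for general maps; hence every element of $F_w(S_w,\beta)\cap\Trop^{-1}(\widehat{V})$ has exactly one preimage, and symmetrically for the $t^s$-markings. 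Equivalently, via the projection formula and compatibility of the fundamental class with forgetting tails \cite[Theorem~1.1]{porta_non-archimedean_2022-1}, $\Psi_{1,\ast}^L$ of the capped class is the cycle defining $N(\widehat{V},\beta)$, each factor $\delta_s^g,\delta_s^t$ contributing a $1$; so $N(L_1,\beta)=N(\widehat{V},\beta)$.

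\textbf{Proof of (2).} The curve $L_{t+1}$ has no twig: the former twig directions have become boundary legs $t^s$ of weight $w_s$ pointing toward $D_{i(s)}$, and the interior legs are just $w,g^1,\dots,g^t$. Hence a stable map $\eta\in M(T_{t+1}^L,\beta)_W$ has no twig in its tropicalisation, so by the correspondence between twigs and intersection multiplicities with $\ca{E}$ (the argument of the lemma preceding Corollary~\ref{coro:reduction-elementary-cylinder}) it is disjoint from $E$ and factors through a toric stable map to $Y_t$. I would then rerun the argument of Lemma~\ref{lemma:compute-M}: such a map is skeletal by \cite[Theorem~8.18]{keelFrobeniusStructureTheorem2019a}, its stabilisation is forced to equal $\st(L_{t+1})$, and in the toric case $(\st,\ev_w)$ is an open immersion onto $\overline{M}_{0,2t+3}\times U$ by \cite[Proposition~6.2]{keelFrobeniusStructureTheorem2019a}. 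So the fibre of $\Psi_{t+1}^L$ over the prescribed point has at most one element, and — using \cite[Lemma~3.12]{keelFrobeniusStructureTheorem2019a} for non-emptiness — it has exactly one precisely when $\beta$ equals the extension curve class of $L_{t+1}$, which by the third bullet preceding Lemma~\ref{lemma:compute-M} is $\widehat{\delta}_V+\sum_{s=1}^t\delta_s$. This gives $N(L_{t+1},\beta)=1$ if $\beta=\widehat{\delta}_V+\sum_s\delta_s$ and $0$ otherwise.

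\textbf{Expected difficulty.} The main obstacle is in part (1): making rigorous that attaching the auxiliary markings on the transverse lines $H_s^g,H_s^t$ neither changes the count nor moves the map out of the stratum $T_1^L$ — i.e. that $H_s^g$ meets the correct irreducible component of $\eta$ in a single reduced point away from the special points, and that forgetting these markings preserves the toric-tail condition defining $N(\widehat{V},\beta)$. Part (2) is then a direct application of the toric-reduction technique already developed for Lemma~\ref{lemma:compute-M}.
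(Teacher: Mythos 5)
Your argument is correct and follows essentially the same route as the paper: part (1) reduces to the observation that each auxiliary interior marking constrained to lie on a divisor of intersection number $1$ with $\beta$ can be forgotten without changing the count (the paper phrases this directly via repeated application of the divisor axiom, you via the degree-$1$ forgetful bijection and then note the divisor-axiom reformulation), and part (2) reruns the skeletal/toric-count argument of Lemma~\ref{lemma:compute-M} exactly as the paper does. The subtleties you flag in your final paragraph — genericity of the incidences with $H^g_s, H^t_s$ and compatibility with the tropicalization and toric-tail conditions — are present in the paper's one-line invocation of the divisor axiom as well, so your write-up does not introduce any new gap.
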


\begin{proof}
  For (1), we note that the count $N(L_1,\beta )$ is given by imposing divisorial conditions at the marked points corresponding to indices in $I_1^L\setminus \lrbrace{w}$. \linebreak By construction, the divisors $H_k^g$ and $H_k^t$ have intersection number $1$ with $\beta$. Thus, applying repeatedly the divisor axiom we see that $N(L_1 ,\beta )$ is given by a count of $3$-pointed curves with two boundary marked points and one interior marked point lying above the initial tropical cylinder. As we evaluate at the interior marked point, we get the count of the initial cylinder by definition.

  For (2), note that $N(L_{t+1}, \beta )$ counts curves without any twigs -- \emph{i.e.} curves that do not meet the exceptional divisor $E$. We can argue as in the proof of Lemma \ref{lemma:compute-M}: evaluate $N(L_{t+1} , \beta )$ as the cardinality of $F = \ev_{I_{t+1}^g}^{-1} ((x_i)_{i\in I_{t+1}^g} )$ which consists of skeletal curves, prove that the domain of an element in $F$ is completely determined by the image of the interior points, and use the result on toric spine counts.
\end{proof}

We can now prove our main result.

\begin{proof}[Proof of Theorem \ref{theo:reduction-primitive-twig}]
Applying Proposition \ref{prop:splitting-formula} inductively, and using Proposition \ref{prop:compute-L}, for $\beta\in\NE (Y)$ we compute
\begin{align*}
  N(V,\beta ) &= N(\widehat{V} , \beta + \widehat{\delta}_V) \\
  &= N(L_1 ,\beta + \widehat{\delta}_V) \\
  &= \sum_{\gamma_1+\cdots +\gamma_t +\gamma_{t+1} = \beta +\widehat{\delta}_V + \sum_{s=1}^t {\delta}_s} N\lr{L_{t+1} , \gamma_{t+1} - \sum_{s=1}^t \widehat{\delta}_s} \prod_{s=1}^t N(N_s, \gamma_s) \\
  &= \sum_{\gamma_1+\cdots +\gamma_t = \beta +\sum_{s=1}^t \widehat{\delta}_s } \prod_{s=1}^t N(N_s, \gamma_s) \\
  &= \sum_{\gamma_1+\cdots +\gamma_t = \beta +\sum_{s=1}^t \widehat{\delta}_s} \prod_{s=1}^t N(V_s, \gamma_s - \widehat{\delta}_s) \\
  &= \sum_{\beta_1+\cdots +\beta_t = \beta}\prod_{s=1}^t N(V_s, \beta_s).
\end{align*}
\end{proof}

\printbibliography[env=bibliography]
\end{document}